\newtheorem{MainTheorem}{Theorem}
\newtheorem*{conjecture*}{Conjecture}
\newtheorem{MainCorollary}[MainTheorem]{Corollary}
\newtheorem{theorem}{Theorem}[section]
\newtheorem{proposition}[theorem]{Proposition}
\newtheorem{lemma}[theorem]{Lemma}
\newtheorem{corollary}[theorem]{Corollary}
\theoremstyle{definition}
\newtheorem{remark}[theorem]{Remark}
\newtheorem{defi}[theorem]{Definition}
\newtheorem{example}[theorem]{Example}
\newenvironment{case}[2][Case]{%
  \trivlist \item[\hskip\labelsep{\itshape #1 #2.}]\begin{em}}{
  \end{em}\endtrivlist}
\newcommand{\evalat}[1]{\bigr\rvert_{#1}}
\newcommand{\set}[2]{\ensuremath{\{#1 \,\colon #2\}}}
\renewcommand\theenumi{\@alph\c@enumi}
\renewcommand\theenumii{\@alph\c@enumii}
\renewcommand\theenumiii{\@alph\c@enumiii}
\renewcommand\theenumiv{\@alph\c@enumiv}
\def\@map#1#2[#3]{\mbox{$#1 \colon #2 \longrightarrow #3$}}
\def\map#1#2{\@ifnextchar [{\@map{#1}{#2}}{\@map{#1}{#2}[#2]}}
\def\@chull#1[#2]{\ensuremath{\langle #1 \rangle_{_{#2}}}}
\def\chull#1{\@ifnextchar [{\@chull{#1}}{\ensuremath{\langle #1 \rangle}}}
\DeclareMathOperator{\Int}{Int}
\DeclareMathOperator{\Pos}{Pos}
\DeclareMathOperator{\Irr}{Irr}
\DeclareMathOperator{\Cl}{Cl}
\DeclareMathOperator{\En}{En}
\begin{document}
\title[Minimum positive entropy for tree cycles in any period]{Characterization of the tree cycles with minimum positive entropy for any period}
\author{David Juher \and Francesc Ma\~{n}osas \and David Rojas}

\address{Departament d'Inform\`atica, Matem\`atica Aplicada i Estad\'{\i}stica,
Universitat de Gi\-ro\-na, c/ Maria Aur\`elia Capmany 61, 17003 Girona, Spain.
ORCID 0000-0001-5440-1705}
\email{david.juher@udg.edu \textrm{(Corresponding author)}}

\address{Departament de Matem\`atiques, Edifici C, Universitat
Aut\`onoma de Barcelona, 08913 Cerdanyola del Vall\`es, Barcelona, Spain. ORCID 0000-0003-2535-0501}
\email{manyosas@mat.uab.cat}

\address{Departament d'Inform\`atica, Matem\`atica Aplicada i Estad\'{\i}stica,
Universitat de Gi\-ro\-na, c/ Maria Aur\`elia Capmany 61, 17003 Girona, Spain. ORCID 0000-0001-7247-4705}
\email{david.rojas@udg.edu}

\thanks{This work has been funded by grants PID2020-118281GB-C31 of Ministerio de Ciencia e Innovaci\'on and 2021 SGR 00113 of Ge\-ne\-ra\-litat de Catalunya. D.R. is a Serra H\'unter fellow.}

\subjclass{Primary: 37E15, 37E25}
 \keywords{tree maps, periodic patterns, topological entropy}

\begin{abstract}
Consider, for any integer $n\ge3$, the set $\Pos_n$ of all $n$-periodic tree patterns
with positive topological entropy and the set $\Irr_n\subset\Pos_n$ of all $n$-periodic
irreducible tree patterns. The aim of this paper is to determine the elements of minimum
entropy in the families $\Pos_n$, $\Irr_n$ and $\Pos_n\setminus\Irr_n$. Let $\lambda_n$ be the unique
real root of the polynomial $x^n-2x-1$ in $(1,+\infty)$. We explicitly construct an
irreducible $n$-periodic tree pattern $\mathcal{Q}_n$ whose entropy is $\log(\lambda_n)$.
We prove that this entropy is minimum in $\Pos_n$. Since the pattern $\mathcal{Q}_n$ is
irreducible, $\mathcal{Q}_n$ also minimizes the entropy in the family $\Irr_n$. We also prove
that the minimum positive entropy in the set $\Pos_n\setminus\Irr_n$ (which is nonempty only
for composite integers $n\ge6$) is $\log(\lambda_{n/p})/p$, where $p$ is the least prime
factor of $n$.
\end{abstract}

\maketitle

\section{Introduction}\label{S1}
The field of Combinatorial Dynamics has its roots in the striking Sharkovskii's Theorem \cite{shar}, in the sense that the theory
grew up as a succession of progressive refinements and generalizations of the ideas contained in the original proof of that result.
The core of the theory is the notion of \emph{combinatorial type} or \emph{pattern}.

Consider a class $\mathcal{X}$ of topological spaces (closed intervals of the real line, trees, graphs and compact surfaces
are classic examples) and the family $\mathcal{F}_\mathcal{X}$ of all maps $\{\map{f}{X}:X\in\mathcal{X}\}$ satisfying a given
property (continuous maps, homeomorphisms, etc). Any of such maps gives rise, by iteration, to a discrete dynamical system. Assume
now that we have a map $\map{f}{X}$ in $\mathcal{F}_\mathcal{X}$ which is known to have a periodic orbit $P$. The \emph{pattern of $P$}
is the equivalence class $\mathcal{P}$ of all maps $\map{g}{Y}$ in $\mathcal{F}_\mathcal{X}$ having an invariant set $Q\subset Y$ that,
at a combinatorial level, behaves like $P$. In this case, we say that every map $g$ in the class \emph{exhibits} the pattern $\mathcal{P}$.
Of course we have to precise in which sense a periodic orbit \emph{behaves as $P$}. So, we have to decide which feature of $P$ has to be
preserved inside the equivalence class $\mathcal{P}$. The period of $P$, just a natural number, is a first possibility (Sharkovskii's Theorem),
but a richer option arises from imposing that
\begin{enumerate}
\item the relative positions of the points of $Q$ inside $Y$ are the same as the relative positions of $P$ inside $X$
\item the way these positions are permuted under the action of $g$ coincides with the way $f$ acts on the points of $P$.
\end{enumerate}

An example is given by the family $\mathcal{F}_\mathcal{M}$ of surface homeomorphisms. The pattern (or \emph{braid type}) of a cycle
$P$ of a map $\map{f}{M}$ from $\mathcal{F}_\mathcal{M}$, where $M$ is a surface, is defined by the isotopy class, up to conjugacy,
of $f\evalat{M\setminus P}$ \cite{bow,mat}.

When $\mathcal{F}_\mathcal{X}$ is the family of continuous maps of closed intervals, the points of an orbit $P$ of a map in $\mathcal{F}_\mathcal{X}$
are totally ordered and the pattern of $P$ can be simply identified with a cyclic permutation in a natural way. The notion of pattern for
interval maps was formalized and developed in the early 1990s \cite{bald,mn}.

In the last decades, a growing interest has arisen in extending the notion of \emph{pattern} from the interval case to more general
one-dimensional spaces such as graphs \cite{patgraf,AMM} or trees \cite{aglmm,bald2,bern}. Precisely, in this paper we deal with patterns of
periodic orbits of continuous maps defined on trees (simply connected graphs).

Let us precise the conditions (a,b) above in our context. If $\map{f}{T}$ is a continuous map of a tree and $P\subset T$ is a periodic orbit of $f$,
the triplet $(T,P,f)$ will be called a \emph{model}. Two points $x,y$ of $P$ will be said to be \emph{consecutive} if the unique closed interval of
$T$ having $x,y$ as endpoints contains no other points of $P$. Any maximal subset of $P$ consisting only of pairwise consecutive points
will be called a \emph{discrete component}. We will say that two models $(T,P,f)$ and $(T',P',f')$
are equivalent if there is a bijection $\phi$ from $P$ to $P'$ which sends discrete components to
discrete components and conjugates the action of $f$ on $P$ and the action of $f'$ on $P'$, i.e. $f' \circ \phi\evalat{P} = \phi \circ f\evalat{P}$.
In Figure~\ref{expat} we show two equivalent 6-periodic models
with two discrete components. Note that two points $x_i,x_j$ of $P$ are consecutive
in $T$ when the corresponding points $x'_i,x'_j$ of $P'$ are consecutive in $T'$.

A \emph{pattern} is an equivalence class of models by the above equivalence relation. A map $\map{f}{T}$ is said to
\emph{exhibit a pattern $\mathcal{P}$} if $f$ has an invariant set $P$ such that $(T,P,f) \in \mathcal{P}.$

\begin{figure}
\centering
\includegraphics[scale=0.9]{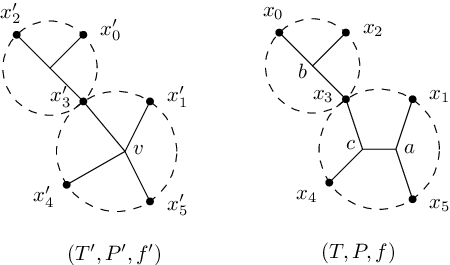}
\caption[fck]{Set $P=\{x_i\}_{i=0}^5$ and $P'=\{x'_i\}_{i=0}^5$. If $\map{f}{T}$ and $\map{f'}{T'}$ are continuous
maps such that $f(x_i)=x_{i+1}$ and $f'(x'_i)=x'_{i+1}$ for $0\leq i\leq 5$, $f(x_5)=x_0$ and $f'(x'_5)=x'_0$,
then the models $(T,P,f)$ and $(T',P',f')$ are equivalent and belong to the same pattern $[T,P,f] = [T',P',f']$.\label{expat}}
\end{figure}

A usual way of measuring the dynamical complexity of a map $\map{f}{X}$ of a compact metric space
is in terms of its \emph{topological entropy}, a notion first introduced in 1965 \cite{AKM}. It is a non-negative real number (or
infinity) that measures how the iterates of the map mix the points of $X$. It will be denoted by $h(f)$. An interval map with
positive entropy is \emph{chaotic} in the sense of Li and Yorke \cite{li-yorke}. The same is true for more general
compact metric spaces \cite{blan}. On the other hand, the dynamics of a map with zero topological entropy is much simpler.

Given a pattern $\mathcal{P}$ in $\mathcal{F}_\mathcal{X}$, we would like to establish, only in terms of the combinatorial
data encoded by $\mathcal{P}$, a lower bound for the dynamical complexity that will be present in any map in $\mathcal{F}_\mathcal{X}$
exhibiting $\mathcal{P}$. In view of what have been said in the previous paragraph, it is natural to define
the \emph{topological entropy of the pattern $\mathcal{P}$}, denoted from now on by $h(\mathcal{P})$, as the infimum of the
topological entropies of all maps in $\mathcal{F}_\mathcal{X}$ exhibiting $\mathcal{P}$.

Although computing the entropy of a continuous map is difficult in general, in some cases the computation of the
entropy of a pattern $\mathcal{P}$ in $\mathcal{F}_\mathcal{X}$ can be easily performed thanks to the existence of
the so called \emph{canonical models}. A \emph{canonical model} of a pattern
$\mathcal{P}$ in $\mathcal{F}_\mathcal{X}$ is a map $f\in\mathcal{F}_\mathcal{X}$ that exhibits $\mathcal{P}$
and satisfies at least the following properties:
\begin{enumerate}
\item[(1)] $f$ is essentially unique and can be constructed from the combinatorial data enclosed in $\mathcal{P}$
\item[(2)] $f$ has minimum entropy in the set of all maps exhibiting $\mathcal{P}$
\item[(3)] the dynamics of $f$ can be completely described using algebraic tools that, in particular, allow us to compute $h(f)$.
\end{enumerate}
From (1--3) it follows that $h(\mathcal{P})$, defined as the infimum of entropies of maps, is in fact a minimum and can be
easily computed as the entropy of the canonical model of $\mathcal{P}$. The existence of canonical
models for patterns has been proved for continuous maps of closed intervals (see \cite{biblia} for a list of
references), homeomorphisms of compact surfaces \cite{fat,Thu} and continuous maps on trees \cite{aglmm}.

Now we are ready to explain the aim of this paper. Several natural questions concerning patterns and entropy arise.
Fix $n\in\mathbb{N}$ and consider the (finite) set of all $n$-periodic tree patterns. An important classification in this set is
given by the zero/positive entropy character of its elements. On the one hand, the zero entropy tree patterns are well understood
and several equivalent characterizations can be found in the literature \cite{Blokh,aglmm,reducibility}. On the other hand,
let $\Pos_n$ be the subset of all $n$-periodic tree patterns with positive entropy. One would like to describe the patterns with
maximal/minimal entropy in $\Pos_n$.

Several advances in the description of the entropy-maximal tree patterns have been reported \cite{ajkm}, but the problem
is still open. In fact, the maximality problem is unsolved even in the particular case of interval patterns \cite{GT,GZ,KS}. Indeed,
the maximal-entropy cyclic permutations of order $n$, when $n$ has the form $4k+2$, are still unknown, although \cite{ajk}
tackles this case from a computational point of view and proposes a conjecture.

In this paper we face the opposite problem: the characterization of the patterns of minimal entropy in
$\Pos_n$. For interval maps, the description of the minimum entropy cycles is known when $n$ is not a
power of two (see \cite{biblia} for a review). In the setting of tree maps and for any $n\ge3$,
an $n$-periodic tree pattern $\mathcal{Q}_n$ was defined in \cite{ajm} that conjecturally has minimal entropy in the set $\Pos_n$
(the problem makes no sense when $n=1,2$, since every periodic pattern of period 1 or 2 has entropy zero), and the
conjecture was proved to be true when $n$ is a power of a prime. See the canonical model of $\mathcal{Q}_n$ in
Figure~\ref{Q6}. The entropy of $\mathcal{Q}_n$ turns out to be $\log(\lambda_n)$, where $\lambda_n$ is the unique
real root of the polynomial $x^n-2x-1$ in $(1,+\infty)$.

\begin{figure}
\centering
\includegraphics[scale=0.8]{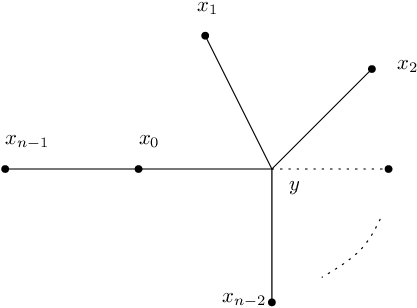}
\caption[fck]{The canonical model $(T,P,f)$ of the pattern $\mathcal{Q}_n$, for
which $P=\{x_i\}_{i=0}^{n-1}$ is time labeled and $f(y)=y$.
\label{Q6}}
\end{figure}

The first main result of this paper states that the conjecture is in fact true for every $n\ge3$.

\begin{MainTheorem}\label{Tachin}
Let $n\ge3$ be a positive integer. Then, $\mathcal{Q}_n$ has minimum entropy in the set $\Pos_n$
of all $n$-periodic patterns with positive entropy. Moreover, $h(\mathcal{P}) > h(\mathcal{Q}_n)=\log(\lambda_n)$ for any
$\mathcal{P}\in\Pos_n$ such that $\mathcal{P}\ne\mathcal{Q}_n$, where $\lambda_n$ is the unique
real root of the polynomial $x^n-2x-1$ in $(1,+\infty)$.
\end{MainTheorem}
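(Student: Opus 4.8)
The proof proceeds by induction on $n$. A pattern of prime period is automatically irreducible, so the case of prime $n$ is subsumed by the irreducible analysis below; for composite $n$ we assume Theorem~A for every smaller period. Since $\mathcal{Q}_n$ is irreducible and $h(\mathcal{Q}_n)=\log\lambda_n$, it belongs to $\Pos_n$, and it suffices to show $h(\mathcal{P})>\log\lambda_n$ for every $\mathcal{P}\in\Pos_n$ with $\mathcal{P}\neq\mathcal{Q}_n$. Suppose first that $\mathcal{P}\in\Pos_n\setminus\Irr_n$. By the structure theory of reducible tree patterns, $\mathcal{P}$ sits over a quotient pattern $\mathcal{P}^{\ast}$ of some period $k$ whose blocks carry a block pattern $\mathcal{P}_B$ of some period $d$, with $kd=n$ and $1<k,d<n$; collapsing the blocks in the canonical model gives $h(\mathcal{P})\ge h(\mathcal{P}^{\ast})$, while from $h(f^{k})=k\,h(f)$ and the fact that $f^{k}$ exhibits $\mathcal{P}_B$ on an invariant neighbourhood of a block one gets $h(\mathcal{P})=\tfrac1k h(f^{k})\ge\tfrac1k h(\mathcal{P}_B)$. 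By the recursive characterization of zero-entropy tree patterns \cite{aglmm,reducibility}, $\mathcal{P}^{\ast}$ and $\mathcal{P}_B$ cannot both have zero entropy. If $\mathcal{P}^{\ast}\in\Pos_k$, the inductive hypothesis gives $h(\mathcal{P})\ge\log\lambda_{k}>\log\lambda_{n}$ since $r\mapsto\lambda_r$ is strictly decreasing and $k<n$. If $\mathcal{P}_B\in\Pos_d$, it gives $h(\mathcal{P})\ge\tfrac1k\log\lambda_d=\tfrac dn\log\lambda_d$, and $\tfrac dn\log\lambda_d>\log\lambda_n$ is equivalent to $\lambda_d^{d}>\lambda_n^{n}$, i.e.\ to $2\lambda_d+1>2\lambda_n+1$, which holds because $\lambda_d>\lambda_n$. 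Thus every reducible element of $\Pos_n$ has entropy strictly above $\log\lambda_n$.

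Now let $(T,P,f)$ be the canonical model of an irreducible $\mathcal{P}\in\Pos_n$, time-labeled so that $f(x_i)=x_{i+1}$ with indices mod $n$; after pruning any eventually periodic endpoint of $T$ lying outside $P$ (which alters neither $\mathcal{P}$ nor $h(\mathcal{P})$) we may assume every endpoint of $T$ belongs to $P$. Let $M$ be the transition matrix of $f$ on the basic paths of $T$, a nonnegative $0/1$ matrix with $h(\mathcal{P})=\log\rho(M)$, and let $G$ be its digraph. Two facts drive the argument: (i) because $f(P)=P$ one has $f^{n}(I)\supseteq I$ for every basic path $I$ with endpoints in $P$, so a closed walk of length $n$ is present in $G$ and, with a little care about branch points, can be routed through a strongly connected component of maximal spectral radius; (ii) because $h(\mathcal{P})>0$, that component has a vertex of out-degree at least $2$. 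Combining (i) and (ii) one extracts from $G$ a subdigraph that is, in the extremal case, a cycle of length $m\le n$ together with a single chord closing a cycle of length $\ell\le m$, with characteristic polynomial $x^{m}-x^{m-\ell}-1$; the dominating root of this polynomial exceeds $\lambda_n$ when the chord is a short shortcut but may fall below $\lambda_n$ when $m$ is close to $n$ and the shortcut is long. The plan is to rule out the cheap configurations: a long shortcut in $G$ corresponds to a proper $f$-invariant coarsening of $P$ into arcs permuted by $f$, i.e.\ to a block structure, and is excluded by irreducibility; while a maximal component supported on a cycle of length $m<n$ is incompatible with $P$ being a single orbit of period exactly $n$ unless that component carries enough extra edges to raise $\rho$ back to at least $\lambda_n$. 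Carrying this out---keeping precise track of how branch points, valences and the time labeling constrain $G$---yields $\rho(M)\ge\lambda_n$, hence $h(\mathcal{P})\ge\log\lambda_n$.

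Finally, for the equality case: if $h(\mathcal{P})=\log\lambda_n$, the analysis above forces the maximal component of $G$ to be precisely the minimal configuration realizing $\lambda_n$, to carry no further edges, and to admit no superfluous branch points; reading this back through the canonical-model construction, and using the time labeling, pins down $T$ (the tree of Figure~\ref{Q6}), the labeling and the map $f$ up to pattern equivalence, so $\mathcal{P}=\mathcal{Q}_n$. The main obstacle is the lower bound $\rho(M)\ge\lambda_n$ itself: the set of trees $T$ and of admissible $n$-periodic time labelings is large, and one must verify uniformly that every way of making the transition digraph cheaper than the one of $\mathcal{Q}_n$ is blocked---by irreducibility, which kills the long-shortcut (block-type) configurations, or by the period being exactly $n$, which kills the short-cycle configurations---while controlling the contribution of branch points and valences. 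This case analysis, absent in the prime-power setting where the reducibility hierarchy is essentially one-layered, is the substantive new content beyond \cite{ajm}.
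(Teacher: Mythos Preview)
Your reducible paragraph is close to a standard block-structure argument but has gaps specific to trees. First, different blocks can carry different patterns under $f^{k}$, so ``a block pattern $\mathcal{P}_B$'' should read ``some block pattern $\mathcal{P}_{B_i}$''; this is cosmetic. More seriously, the skeleton $\mathcal{P}^\ast$ is only defined in the paper for \emph{separated} block structures, and a reducible tree pattern need not admit any separated block structure (see the $4$-block structure in Figure~\ref{ull2blocks}). Without separation neither the collapse map nor the inequality $h(\mathcal{P})\ge h(\mathcal{P}^\ast)$ is available as stated, and likewise ``$f^{k}$ exhibits $\mathcal{P}_B$ on an invariant neighbourhood of a block'' needs an argument, since $f^{k}(\langle P_0\rangle)$ may properly contain $\langle P_0\rangle$.

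The irreducible paragraph is not a proof. You sketch a plan---extract from the transition digraph a cycle-plus-chord and compare its spectral radius to $\lambda_n$, arguing that the cheap configurations are ``blocked'' by irreducibility or by the period---but you do not execute it, and you say so yourself (``The main obstacle is the lower bound $\rho(M)\ge\lambda_n$ itself \ldots\ This case analysis \ldots\ is the substantive new content''). The two key assertions, that a long shortcut in $G$ forces a block structure and that a maximal component on a short cycle is incompatible with period $n$, are neither proved nor obviously true for tree patterns with branch points. These are precisely the hard steps, and the equality case rests on them as well.

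The paper does not split along reducible/irreducible lines at all. It first performs openings until every further opening has entropy zero (property~(\ref{opening0})), and then proves a structural theorem (Theorem~\ref{2inners_3op}): under~(\ref{opening0}), any pattern with at least two inner points and at least three openings is $\pi$-reducible, hence has a separated structure of trivial blocks, and one passes to the skeleton and uses induction via Corollary~\ref{skeletonok}. The only shapes escaping Theorem~\ref{2inners_3op} are $k$-flowers and triple chains, each handled by a dedicated argument (Sections~\ref{S8} and~\ref{S9}): flowers via subordinated patterns and Lemma~\ref{subor}, triple chains via an explicit covering count giving $h(\mathcal{P})>\log\sqrt[n]{4}>\log\lambda_n$. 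The machinery behind Theorem~\ref{2inners_3op}---branching sequences, bidirectional points, the combinatorics of Section~\ref{S6b}---is the genuine new content, and nothing analogous appears in your sketch.
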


Traditionally, reducibility/irreducibility has been another important classification for tree patterns.
A pattern is \emph{reducible} when it has a block structure (see Section~\ref{S3}). Roughly speaking, this
means that the points of the orbit can be partitioned into disjoint subtrees that are permuted under the action
of the map. The notion of reducibility arose early in the study of interval maps and
has been recently extended to the setting of tree patterns \cite{reducibility}. The irreducible tree patterns
are closely related to pseudo-Anosov braid types of periodic orbits of orientation preserving disk homeomorphisms
\cite{fm}. As we will see, every irreducible tree pattern has positive entropy. The dynamic relevance
of the patterns from $\Irr_n$ motivates the study of the minimality of the entropy in this subclass of
$\Pos_n$. For interval maps, the problem was solved in \cite{minor}. Since the minimum entropy pattern
$\mathcal{Q}_n$ turns out to be irreducible, Theorem~\ref{Tachin} incidentally proves that
$\mathcal{Q}_n$ also minimizes the topological entropy in the subclass $\Irr_n$.

\begin{MainCorollary}\label{Tachin2}
Let $n\ge3$ be a positive integer. Then, $\mathcal{Q}_n$ has minimum entropy in the set $\Irr_n$
of all $n$-periodic irreducible patterns. Moreover, $h(\mathcal{P}) > h(\mathcal{Q}_n)=\log(\lambda_n)$ for any
$\mathcal{P}\in\Irr_n$ such that $\mathcal{P}\ne\mathcal{Q}_n$.
\end{MainCorollary}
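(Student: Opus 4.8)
The plan is to obtain Corollary~\ref{Tachin2} as an essentially formal consequence of Theorem~\ref{Tachin}, once two facts about irreducibility are in place: (i) every irreducible $n$-periodic tree pattern has positive entropy, so that $\Irr_n\subseteq\Pos_n$; and (ii) the pattern $\mathcal{Q}_n$ is itself irreducible, i.e. $\mathcal{Q}_n\in\Irr_n$. Granting (i) and (ii), the argument is one line: given $\mathcal{P}\in\Irr_n$ we have $\mathcal{P}\in\Pos_n$ by (i), so Theorem~\ref{Tachin} yields $h(\mathcal{P})\ge\log(\lambda_n)=h(\mathcal{Q}_n)$ with equality only if $\mathcal{P}=\mathcal{Q}_n$; and $\mathcal{Q}_n\in\Irr_n$ by (ii), so the minimum over $\Pos_n$ is already attained inside the subclass $\Irr_n$, and the strict inequality for $\mathcal{P}\ne\mathcal{Q}_n$ is inherited verbatim from Theorem~\ref{Tachin}. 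Note, in particular, that no entropy estimate is needed here beyond those already proved for Theorem~\ref{Tachin}.

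For fact (i) I would appeal to the characterization of zero-entropy tree patterns recalled in Section~\ref{S3} (see also \cite{Blokh,aglmm,reducibility}): a pattern of zero entropy is reducible, since it can be peeled down through a finite chain of \emph{block structures} to a trivial pattern; contrapositively, a pattern with no block structure must have positive entropy. This requires no new ideas, only assembling the relevant statements of the reducibility machinery of Section~\ref{S3}.

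The only genuine verification is fact (ii), that $\mathcal{Q}_n$ admits no nontrivial block structure, and I would read it off the canonical model $(T,P,f)$ of Figure~\ref{Q6}. When $n$ is prime there is nothing to do: a nontrivial block structure of an $n$-periodic pattern forces the number of blocks to be a divisor $k$ of $n$ with $1<k<n$, with all blocks of size $n/k$ cyclically permuted by $f$; since the orbit is time-labeled by $f(x_i)=x_{i+1}$, the block containing $x_i$ is then forced to be the residue class $\{x_j : j\equiv i \pmod k\}$. For composite $n$ and each such proper divisor $k>1$, what remains is to show that this residue-class partition fails the geometric separation condition of Section~\ref{S3}, i.e. that the subtrees spanned by the $k$ residue classes are not pairwise disjoint; this should follow directly from the explicit star/comb shape of $T$ and the prescribed positions of the points $x_0,\dots,x_{n-1}$ on it. I expect this combinatorial bookkeeping over the divisors of $n$ to be the main — and indeed the only — obstacle; once it is dispatched, Corollary~\ref{Tachin2} follows immediately from Theorem~\ref{Tachin}.
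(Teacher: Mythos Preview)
Your overall plan is correct and matches the paper's: the corollary follows from Theorem~\ref{Tachin} together with (i) $\Irr_n\subset\Pos_n$ and (ii) $\mathcal{Q}_n\in\Irr_n$. Two remarks on the verification of (ii).

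First, your divisor-by-divisor check works but is more laborious than necessary. The paper (Lemma~\ref{qnirred}) dispatches it uniformly: in any putative $p$-block structure, the block containing the endpoint $x_{n-1}$ has at least two points, so its convex hull contains $x_0$ (which separates $x_{n-1}$ from every other point of $P$); the condition $\chull{P_i}\cap P_j=\emptyset$ then forces $x_0$ to lie in that same block, but $f(x_{n-1})=x_0$ makes the block map into itself, contradicting cyclic permutation. No case analysis over divisors of $n$ is needed.

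Second, be careful with the phrase ``the subtrees spanned by the $k$ residue classes are not pairwise disjoint'': that is the failure of the \emph{separated} condition $\chull{P_i}\cap\chull{P_j}=\emptyset$, which is strictly stronger than the block-structure condition $\chull{P_i}\cap P_j=\emptyset$ of Section~\ref{S3}. Irreducibility means no block structure at all, so you must show the latter fails; showing only that the convex hulls meet would not rule out a non-separated block structure. In practice the natural computation for $\mathcal{Q}_n$ does yield $x_0\in\chull{P_m}\cap P_0$ for the block $P_m\ni x_{n-1}$, which is the correct failure, but your description as written targets the wrong condition.
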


Now, the problem of determining the
minimum (positive) entropy in the family of all reducible patterns arises. It is not difficult to see that
$\Pos_n\setminus\Irr_n\ne\emptyset$ if and only if $n$ is not a prime and $n\ge6$. By Theorem~\ref{Tachin},
the minimum positive entropy for any reducible pattern is strictly larger than $\log(\lambda_n)$. The
second main result of this paper gives the minimum entropy in $\Pos_n\setminus\Irr_n$.
In this case, however, the minimum entropy pattern is not unique.

\begin{MainTheorem}\label{Tachin3}
Let $n\ge6$ be a composite number. Then, the minimum positive entropy in the set of all reducible $n$-periodic
patterns is $\log(\lambda_{n/p})/p$, where $p$ is the smallest prime factor of $n$.
\end{MainTheorem}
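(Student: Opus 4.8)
\emph{Proof proposal.} The plan is to reduce the statement to the behaviour of entropy under a block structure. Recall (Section~\ref{S3} and \cite{reducibility}) that a reducible $n$-periodic pattern $\mathcal{P}$ carries a block structure with $k$ blocks for some divisor $k$ of $n$ with $1<k<n$, and that such a structure determines a \emph{quotient} pattern $\mathcal{R}$ of period $k$ (collapse each block's convex hull to a point) and a \emph{block} pattern $\mathcal{S}$ of period $n/k$ (the pattern of one block under the $k$-th iterate of the map). The first thing I would set up, from the results of \cite{reducibility}, is the entropy formula
\[
h(\mathcal{P})=\max\{\,h(\mathcal{R}),\ h(\mathcal{S})/k\,\}.
\]
The inequality ``$\ge$'' is elementary: collapsing the blocks' convex hulls presents $\mathcal{R}$ as a topological factor of the canonical model of $\mathcal{P}$, and a suitable iterate of that canonical model presents $\mathcal{S}$, so $h(\mathcal{P})\ge h(\mathcal{R})$ and $k\,h(\mathcal{P})\ge h(\mathcal{S})$. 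The reverse inequality is realized by the standard blow-up construction: inflate each point of the $k$-periodic orbit of the canonical model of $\mathcal{R}$ into a tiny rescaled copy of the canonical model of $\mathcal{S}$, arranged so that the $k$-th return map to each copy realizes $\mathcal{S}$; this produces a reducible $n$-periodic pattern with entropy exactly $\max\{h(\mathcal{R}),h(\mathcal{S})/k\}$.

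For the lower bound I would first isolate the monotonicity fact that drives the estimate: since $\lambda_m^m=2\lambda_m+1$ we have $m\log\lambda_m=\log(2\lambda_m+1)$, and as $\lambda_m$ is strictly decreasing in $m$ (immediate from the sign of $x^m-2x-1$ on $(1,+\infty)$), the function $m\mapsto m\log\lambda_m$ is strictly decreasing. Now take any $\mathcal{P}\in\Pos_n\setminus\Irr_n$ and fix a block structure with $k$ blocks; note $k\ge p$ because $k>1$ divides $n$. Since $h(\mathcal{P})>0$, at least one of $h(\mathcal{R}),h(\mathcal{S})$ is positive. If $h(\mathcal{S})>0$ then $\mathcal{S}\in\Pos_{n/k}$, hence $n/k\ge3$ and, by Theorem~\ref{Tachin}, $h(\mathcal{S})\ge\log\lambda_{n/k}$, so
\[
h(\mathcal{P})\ \ge\ \frac{\log\lambda_{n/k}}{k}\ =\ \frac1n\left(\frac nk\,\log\lambda_{n/k}\right)\ \ge\ \frac1n\left(\frac np\,\log\lambda_{n/p}\right)\ =\ \frac{\log\lambda_{n/p}}{p},
\]
where the middle inequality uses $n/k\le n/p$ and the monotonicity fact. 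If instead $h(\mathcal{S})=0$ but $h(\mathcal{R})>0$, then $\mathcal{R}\in\Pos_k$ and Theorem~\ref{Tachin} gives $h(\mathcal{P})\ge h(\mathcal{R})\ge\log\lambda_k$; since $k<n$ divides $n$, the number $n/k$ is a divisor of $n$ larger than $1$, so $n/k\ge p$, i.e.\ $k\le n/p$, whence $h(\mathcal{P})\ge\log\lambda_k\ge\log\lambda_{n/p}>\log(\lambda_{n/p})/p$. In every case $h(\mathcal{P})\ge\log(\lambda_{n/p})/p$.

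For the matching value and the non-uniqueness, set $m=n/p$; the hypothesis that $n\ge6$ is composite guarantees $m\ge3$, so $\mathcal{Q}_m$ and its entropy $\log\lambda_m$ are available from Theorem~\ref{Tachin}. Apply the blow-up construction with $\mathcal{R}$ equal to any zero-entropy $p$-periodic pattern (such patterns exist for every $p$; for instance the pattern whose canonical model is a finite-order rotation of the $p$-star) and $\mathcal{S}=\mathcal{Q}_m$. This yields a reducible $n$-periodic pattern $\mathcal{P}$ with $h(\mathcal{P})=\max\{0,\log(\lambda_m)/p\}=\log(\lambda_{n/p})/p>0$, so $\mathcal{P}\in\Pos_n\setminus\Irr_n$ attains the lower bound. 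Different choices of the zero-entropy quotient $\mathcal{R}$, and of the insertion data for the blocks, produce non-equivalent patterns with the same minimal entropy, which explains why the minimizer is not unique.

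The step I expect to be the main obstacle is the first one: making the entropy formula $h(\mathcal{P})=\max\{h(\mathcal{R}),h(\mathcal{S})/k\}$ fully rigorous for an arbitrary block structure — especially the inequality $h(\mathcal{P})\ge h(\mathcal{S})/k$, which is what forces the lower bound — and verifying that the blow-up construction attains this value exactly. Once that is in hand, the remainder is the elementary comparison of the quantities $\log(\lambda_{n/k})/k$ over the divisors $k$ of $n$, governed by the monotonicity of $m\mapsto m\log\lambda_m$, which pins down the minimizing $k$ as the least prime factor $p$.
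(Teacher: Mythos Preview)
Your approach is cleaner in spirit than the paper's, but the entropy formula $h(\mathcal{P})=\max\{h(\mathcal{R}),h(\mathcal{S})/k\}$ on which everything hinges is a genuine gap, not just a technicality. Three concrete problems: (i) The quotient $\mathcal{R}$ is only defined in the paper for \emph{separated} block structures, and reducible tree patterns can carry block structures that are not separated in the canonical model (see Figure~\ref{ull2blocks}); collapsing intersecting convex hulls does not produce a tree. (ii) There is no single ``block pattern'' $\mathcal{S}$: the subordinated patterns $([\chull{P_i},P_i],[f^k])$ may differ from block to block. (iii) Most importantly, your dichotomy ``since $h(\mathcal{P})>0$, one of $h(\mathcal{R}),h(\mathcal{S})$ is positive'' uses the inequality $h(\mathcal{P})\le\max\{h(\mathcal{R}),h(\mathcal{S})/k\}$, which is exactly the direction that is \emph{not} elementary. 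The easy direction $\ge$ (factor map, subordinated pattern) gives you nothing here. No such formula is proved in \cite{reducibility} or in this paper for general block structures on trees; establishing it would amount to a separate theorem.

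The paper avoids this entirely. It first opens $\mathcal{P}$ down (Lemma~\ref{openingblock} keeps reducibility) until every further opening has entropy zero, and then uses Theorem~\ref{2inners_3op} to force the resulting pattern to be either $\pi$-reducible, a $k$-flower, or a triple chain. Each case is handled with tools already built: the skeleton and Proposition~\ref{81} for the $\pi$-reducible case, the subordinated-pattern inequality \eqref{desi} for flowers (where all blocks but one are automatically trivial, so $\pi$-irreducibility of $\mathcal{P}$ forces the one nontrivial block to have positive entropy), and the explicit covering count of Corollary~\ref{n4} plus Lemma~\ref{minim}(b) for triple chains. Your construction of the minimizers via blow-up is essentially the paper's $p$-extension (Lemma~\ref{hext}), and your monotonicity of $m\mapsto m\log\lambda_m$ is Lemma~\ref{minim}(a); those parts are fine.
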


This paper is organized as follows. In Section~\ref{S2} we introduce formally the basic notions of
pattern, canonical model and path transition matrix, and recall how to compute the topological entropy
of a pattern. In Section~\ref{S3} we review some classic notions and results about block structures and reducibility
for tree patterns, that we use in Section~\ref{S6} to recall the characterization of zero entropy periodic patterns.
A deeper study of the structure of zero entropy paterns is carried out in Section~\ref{S6b}.
In Section~\ref{S4} we briefly recall a mechanism, first introduced in \cite{ajm}, that allows us to compare the
entropies of two patterns $\mathcal{P}$ and $\mathcal{O}$ when $\mathcal{O}$ has been obtained by joining together
several discrete components of $\mathcal{P}$. Section~\ref{S5} is devoted to the task of explaining the strategy
of the proof of Theorem~\ref{Tachin}. As we will see, the proof is by induction on the period $n$ and relies on
a core result, Theorem~\ref{2inners_3op}, that is stated in the same section and proved in Section~\ref{S7} using
the results of Section~\ref{S6b}. The use of this result allows us to prove Theorem~\ref{Tachin} for almost all patterns,
with two particular exceptions: the \emph{$k$-flowers} (patterns with $k$ discrete components attached at a unique central
point) and the \emph{triple chain}, a pattern with three consecutive discrete components. We deal with these two cases
in Sections~\ref{S8} and \ref{S9} respectively. Putting all together, we prove Theorem~\ref{Tachin} in Section~\ref{S10}.
Finally, Section~\ref{S11} is devoted to the proof of Corollary~\ref{Tachin2} and Theorem~\ref{Tachin3}.

\section{Patterns and canonical models}\label{S2}
In this section we formalize the definitions outlined in the Introduction. We also recall how to compute
the topological entropy of a pattern by using purely combinatorial tools. Finally we define the pattern that
will be proved to have minimum positive entropy.

A \emph{tree} is a compact uniquely arcwise connected space which is a
point or a union of a finite number of intervals (by an \emph{interval} we mean any space homeomorphic to $[0,1]$).
Any continuous map $\map{f}{T}$ from a tree $T$ into itself will be called a \emph{tree map}. A set $X\subset T$
is said to be \emph{$f$-invariant} if $f(X)\subset X$. For each $x\in T$, we define the \emph{valence} of $x$ to
be the number of connected components of $T\setminus\{x\}$. A point of valence different from 2 will be called
a \emph{vertex} of $T$ and the set of vertices of $T$ will be denoted by $V(T)$. Each point of valence 1 will
be called an \emph{endpoint} of $T$. The set of such points will be denoted by $\En(T)$. Also, the closure of
a connected component of $T \setminus V(T)$ will be called an \emph{edge of $T$}.

Given any subset $X$ of a topological space, we will denote by $\Int(X)$ and $\Cl(X)$ the interior and the closure
of $X$, respectively. For a finite set $P$ we will denote its cardinality by $|P|$.

A triplet $(T,P,f)$ will be called a \emph{model} if $\map{f}{T}$ is a tree map and $P$ is a finite $f$-invariant set
such that $\En(T)\subset P$. In particular, if $P$ is a periodic orbit of $f$ and $|P|=n$ then $(T,P,f)$ will be called
an \emph{$n$-periodic model}. Given $X\subset T$ we will define the \emph{connected hull} of $X$, denoted by
$\chull{X}_T$ or simply by $\chull{X}$, as the smallest closed connected subset of $T$ containing $X$. When
$X=\{x,y\}$ we will write $[x,y]$ to denote $\chull{X}$. The notations $(x,y)$, $(x,y]$ and $[x,y)$ will be understood
in the natural way.

An $n$-periodic orbit $P=\{x_i\}_{i=0}^{n-1}$ of a map $\theta$ will be said to be \emph{time labeled} if
$\theta(x_i)=x_{i+1}$ for $0\le i<n-1$ and $\theta(x_{n-1})=x_0$.

Let $T$ be a tree and let $P\subset T$ be a finite subset of $T$. The pair $(T,P)$ will be called a \emph{pointed tree}.
Two points $x,y$ of $P$ will be said to be \emph{consecutive} if $(x,y)\cap P=\emptyset$. Any maximal subset of $P$
consisting only of pairwise consecutive points will be called a \emph{discrete component} of $(T,P)$. We say that two
pointed trees $(T,P)$ and $(T',P')$ are \emph{equivalent} if there exists a bijection $\map{\phi}{P}[P']$ which preserves
discrete components. The equivalence class of a pointed tree $(T,P)$ will be denoted by $[T,P]$.

Let $(T,P)$ and $(T',P')$ be equivalent pointed trees, and let
$\map{\theta}{P}$ and $\map{\theta'}{P'}$ be maps. We will say that
$\theta$ and $\theta'$ are \emph{equivalent} if $\theta'=
\phi\circ\theta \circ\phi^{-1}$ for a bijection
$\map{\phi}{P}[P']$ which preserves discrete components. The
equivalence class of $\theta$ by this relation will be denoted by
$[\theta]$. If $[T,P]$ is an equivalence class of pointed trees and
$[\theta]$ is an equivalence class of maps then the pair
$([T,P],[\theta])$ will be called a \emph{pattern}.
We will say that a model $(T,P,f)$ \emph{exhibits} a pattern
$(\mathcal{T},\Theta)$ if $\mathcal{T}=[\chull{P}_T,P]$ and
$\Theta=[f\evalat{_{P}}]$.

Despite the fact that the notion of a discrete component is defined for pointed trees,
by abuse of language we will use the expression \emph{discrete component of a pattern},
which will be understood in the natural way since the number of discrete components
and their relative positions are the same for all models of the pattern.

Recall that the topological entropy of a continuous tree map $f$ is denoted by $h(f)$.
Given a pattern $\mathcal{P}$, the topological entropy of $\mathcal{P}$ is defined to be
\[
h(\mathcal{P}) := \inf \set{h(f)}{ (T,P,f)\ \text{is a model exhibiting}\
\mathcal{P} }.
\]

The simplest models exhibiting a given pattern are the monotone ones,
defined as follows. Let $\map{f}{T}$ be a tree map map. Given $a,b\in T$ we say that
$f\evalat{[a,b]}$ is \emph{monotone} if $f([a,b])$ is either an
interval or a point and $f\evalat{[a,b]}$ is monotone as an interval
map. Let $(T,P,f)$ be a model. A pair $\{a,b\}\subset P$
will be called a \emph{basic path of $(T,P)$} if it is contained in a
single discrete component of $(T,P)$. We will say that $f$ is
\emph{$P$-monotone} if $f\evalat{[a,b]}$ is
monotone for any basic path $\{a,b\}$. The model $(T,P,f)$ will then be said
to be \emph{monotone}. In such case, Proposition~4.2 of \cite{aglmm}
states that the set $P\cup V(T)$ is $f$-invariant (recall that $V(T)$ stands for
the set of vertices of $T$). Hence, the map $f$ is also $(P\cup V(T))$-monotone.
Observe that the notion of $P$-monotonicity is much more restrictive than the usual
topological notion of a \emph{monotone map} (full preimages of continua are continua).

Theorem~A of \cite{aglmm} states that every pattern $\mathcal{P}$ has monotone models, and that
for every monotone model $(T,P,f)$ of $\mathcal{P}$, $h(f)=h(\mathcal{P})$. Moreover,
there exists a special class of monotone models, satisfying several extra properties
that we omit here, called \emph{canonical models}. Theorem~B of \cite{aglmm} states
that every pattern has a canonical model. Moreover, given two canonical models
$(T,P,f)$ and $(T',P',f')$ of the same pattern there exists a
homeomorphism $\map{\phi}{T}[T']$ such that $\phi(P) = P'$ and
$f' \circ \phi\evalat{P} = \phi \circ f\evalat{P}$. Hence, the
canonical model of a pattern is essentially unique. Summarizing, we have
the following result.

\begin{theorem}\label{A-AGLMM}
Let $\mathcal{P}$ be a pattern. Then the following statements hold.
\begin{enumerate}
\item There exists a canonical model of $\mathcal{P}$.
\item The canonical model $(T,P,f)$ of $\mathcal{P}$ satisfies $h(f)=h(\mathcal{P})$.
\end{enumerate}
\end{theorem}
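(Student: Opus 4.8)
The plan is to deduce Theorem~\ref{A-AGLMM} directly from Theorems~A and~B of \cite{aglmm}, which were recalled in the paragraphs preceding the statement. Statement~(1) is exactly the existence assertion of Theorem~B of \cite{aglmm}. For statement~(2), the key remark is that a canonical model is in particular a \emph{monotone} model exhibiting $\mathcal{P}$; since Theorem~A of \cite{aglmm} asserts that $h(f)=h(\mathcal{P})$ for \emph{every} monotone model $(T,P,f)$ of $\mathcal{P}$, the desired equality follows at once. So the proof reduces to quoting the two cited results and combining them, and the only thing worth spelling out is the idea behind those two results.

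For orientation, here is how one produces the objects involved. Given an arbitrary model $(Y,Q,g)$ exhibiting $\mathcal{P}$, one \emph{monotonizes} it: on each basic path $[a,b]$ one replaces $g$ by the monotone interval map with the same behaviour at the endpoints $a,b$ (collapsing $[a,b]$ to a point when $g(a)=g(b)$), and one checks that these local replacements glue into a globally well-defined $Q$-monotone tree map still exhibiting $\mathcal{P}$; here one uses that $Q\cup V(Y)$ is invariant under a monotone map (Proposition~4.2 of \cite{aglmm}), so the construction can be carried out coherently edge by edge. Among all monotone models of $\mathcal{P}$ one then singles out the canonical one by a further normalization (tightening each edge, suppressing the vertices that are not needed, prescribing the behaviour on the pieces where the map is locally constant); the essential uniqueness of the outcome, up to a conjugating homeomorphism, is proved by induction on the combinatorial data of the pattern.

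The substantive ingredient is the entropy equality for monotone models. One inequality, $h(f)\ge h(\mathcal{P})$, is immediate because $h(\mathcal{P})$ is by definition the infimum of $h(g)$ over all models $(Y,Q,g)$ exhibiting $\mathcal{P}$, and $(T,P,f)$ is one such model. For the reverse inequality one shows that the monotonization above never increases topological entropy: the monotonized model is a topological factor of $(Y,Q,g)$, via the semiconjugacy obtained by collapsing the subintervals on which $g$ fails to be monotone, so every monotone model of $\mathcal{P}$ has entropy at most $h(g)$ for all $g$ exhibiting $\mathcal{P}$, hence at most $h(\mathcal{P})$. That all monotone models of $\mathcal{P}$ share one and the same entropy is where the work of \cite{aglmm} lies: this common value is computed from the path transition matrix of $\mathcal{P}$, a purely combinatorial invariant of the pattern. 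The main obstacle is precisely this last point — the tight control of entropy under monotonization together with its combinatorial computation — which rests on the structure theory of tree maps developed in \cite{aglmm} rather than on any short self-contained argument; everything else is bookkeeping with the discrete components and the action of the pattern.
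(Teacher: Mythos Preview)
Your proposal is correct and matches the paper's approach exactly: the paper does not give an independent proof of this theorem but presents it as a direct summary of Theorems~A and~B of \cite{aglmm}, precisely as you do. The additional orientation you provide on how those results are proved in \cite{aglmm} is extra (and largely accurate), though note a small slip in your sketch of the reverse entropy inequality: monotonizing a \emph{given} model $(Y,Q,g)$ only shows that \emph{some} monotone model has entropy $\le h(g)$, not that \emph{every} monotone model does---the latter requires the fact (which you correctly identify afterwards) that all monotone models share the same entropy via the path transition matrix.
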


It is worth noticing that the proof of Theorem~\ref{A-AGLMM} gives a finite algorithm to
construct the canonical model of any pattern.
For instance, the model $(T,P,f)$ in the right picture of Figure~\ref{expat} is the canonical model of the
corresponding pattern. The $P$-monotonicity of $f$ determines that $f(a) = b,$ $f(b) = c,$ and $f(c) = c.$
Observe also that the left model $(T',P',f')$ of Figure~\ref{expat}, a representative of the same pattern,
cannot be $P'$-monotone, since in this case we would have $f'(v) \in f'([x'_2,x'_6]) \cap f'([x'_4,x'_5]) =
[x'_3,x'_1] \cap [x'_5,x'_6] = \emptyset.$

There is a combinatorial procedure to compute the entropy of a pattern $\mathcal{P}$ which
does not require the construction of its canonical model. Indeed, $h(\mathcal{P})$ can be
obtained from the transition matrix of a combinatorial directed graph that can be derived
independently of the images of the vertices in any particular monotone model of
the pattern. Let us recall this procedure.

A \emph{combinatorial directed graph} is a pair $\mathcal{G}=(V,U)$
where $V = \{v_1,v_2,\dots,v_k\}$ is a finite set and $U\subset V\times V$.
The elements of $V$ are called the \emph{vertices} of $\mathcal{G}$ and
each element $(v_i,v_j)$ in $U$ is called an \emph{arrow} (from $v_i$ to
$v_j$) in $\mathcal{G}$. Such an arrow is usually denoted by $v_i\rightarrow v_j$.
The notions of \emph{path} and \emph{loop} in $\mathcal{G}$ are defined as usual.
The \emph{length} of a path is defined as the number of arrows in the path.
The \emph{transition matrix} of $\mathcal{G}$ is a $k\times k$ binary matrix
$(m_{ij})_{i,j=1}^k$ such that $m_{ij}=1$ if and only if there is an arrow
from $v_i$ to $v_j$, and $m_{ij}=0$ otherwise.

Let $\{\pi_1,\pi_2,\ldots,\pi_k\}$ be the set
of basic paths of the pointed tree $(T,P)$. We will say that $\pi_i$ \emph{$f$-covers} $\pi_j$,
denoted by $\pi_i\rightarrow\pi_j$, whenever $\pi_j\subset \chull{f(\pi_i)}_T$. The
\emph{$\mathcal{P}$-path graph} is the combinatorial directed graph whose vertices are in one-to-one
correspondence with the basic paths of $(T,P)$, and there is an arrow from the vertex $i$ to the vertex
$j$ if and only if $\pi_i$ $f$-covers $\pi_j$. The associated transition matrix, denoted by $M_\mathcal{P}$,
will be called the \emph{path transition matrix of $\mathcal{P}$}. It can be seen that the definitions of
the $\mathcal{P}$-path graph and the matrix $M_{\mathcal{P}}$ are independent of the particular
choice of the model $(T,P,f)$. Thus, they are well-defined pattern invariants.

For any square matrix $M$, we will denote its \emph{spectral radius}
by $\rho(M)$. We recall that it is defined as the maximum of the
moduli of the eigenvalues of $M$.

\begin{remark}\label{patent}
Let $M_{\mathcal{P}}$ be the path transition matrix of a pattern $\mathcal{P}$. Then (see \cite{aglmm}), the topological
entropy of $\mathcal{P}$ can be computed as $h(\mathcal{P})=\log\max\{\rho(M_{\mathcal{P}}),1\}$.
\end{remark}

\label{qn}
To end this section we define the patterns that will be showed to have minimum positive entropy. Let $n\in\mathbb{N}$
with $n\ge3$. Let $\mathcal{Q}_n$ be the $n$-periodic pattern $([T,P],[\theta])$ such that $P=\{x_0,x_1,\ldots,x_{n-1}\}$
is time labeled and $(T,P)$ has two discrete components, $\{x_{n-1},x_0\}$ and $\{x_0,x_1,\ldots,x_{n-2}\}$. In Figure~\ref{Q6}
we show the canonical model of $\mathcal{Q}_n$. Observe that $\mathcal{Q}_3$ is nothing but the
3-periodic \v{S}tefan cycle of the interval \cite{ste}. In \cite{ajm} the authors prove that $h(\mathcal{Q}_n)=\log(\lambda_n)$, where
$\lambda_n$ is the unique real root of the polynomial $x^n-2x-1$ in $(1,+\infty)$. We will use the following properties of the
numbers $\lambda_n$. Statement (a) is proved in Proposition~3.1 of \cite{ajm}, while statement (b) is an easy exercise.

\begin{proposition}\label{propietats}
Let $n$ be any positive integer with $n\ge 3$. Then:
\begin{enumerate}
\item $\lambda_{n+1}<\lambda_n$
\item $\sqrt[n]{4}>\lambda_n$.
\end{enumerate}
\end{proposition}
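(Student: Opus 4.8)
The plan is to treat both statements as elementary analytic facts about the polynomials $p_n(x)=x^n-2x-1$ and their positive roots $\lambda_n\in(1,+\infty)$. The existence and uniqueness of $\lambda_n$ in $(1,+\infty)$ is clear since $p_n(1)=-2<0$, $p_n(x)\to+\infty$ as $x\to+\infty$, and $p_n'(x)=nx^{n-1}-2$ has a single zero in $(0,+\infty)$, so $p_n$ is eventually strictly increasing on $(1,+\infty)$ (indeed $p_n'(x)\ge n-2>0$ there for $n\ge3$); hence $p_n$ is strictly increasing on $[1,+\infty)$ and has exactly one root there.

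For statement (a), $\lambda_{n+1}<\lambda_n$: since $p_n$ is strictly increasing on $[1,+\infty)$ and vanishes at $\lambda_n$, it suffices to show $p_n(\lambda_{n+1})>0$, i.e. that $\lambda_{n+1}$ overshoots the root of $p_n$. Using $\lambda_{n+1}^{\,n+1}=2\lambda_{n+1}+1$ and $\lambda_{n+1}>1$, I would write
\[
p_n(\lambda_{n+1})=\lambda_{n+1}^{\,n}-2\lambda_{n+1}-1
=\frac{\lambda_{n+1}^{\,n+1}}{\lambda_{n+1}}-2\lambda_{n+1}-1
=\frac{2\lambda_{n+1}+1}{\lambda_{n+1}}-2\lambda_{n+1}-1.
\]
Clearing the denominator, the sign of this equals the sign of $2\lambda_{n+1}+1-(2\lambda_{n+1}+1)\lambda_{n+1}=(2\lambda_{n+1}+1)(1-\lambda_{n+1})$, which is negative since $\lambda_{n+1}>1$. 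So $p_n(\lambda_{n+1})<0$, giving $\lambda_{n+1}<\lambda_n$. (One checks the easy boundary computation $p_n(1)=-2<0$ is consistent, and that $\lambda_{n+1}>1$ is genuinely strict because $p_{n+1}(1)=-2\ne0$.)

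For statement (b), $\sqrt[n]{4}>\lambda_n$, equivalently $4>\lambda_n^{\,n}$: since $\lambda_n^{\,n}=2\lambda_n+1$, this is equivalent to $2\lambda_n+1<4$, i.e. $\lambda_n<3/2$. By monotonicity of $p_n$ on $[1,+\infty)$ it is enough to verify $p_n(3/2)>0$, i.e. $(3/2)^n>3+1=4$; but $(3/2)^3=27/8>4$ and $(3/2)^n$ is increasing in $n$, so this holds for all $n\ge3$. Hence $\lambda_n<3/2$ and $\lambda_n^{\,n}=2\lambda_n+1<4$, which is the claim.

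The argument is entirely routine; the only point requiring a moment's care is establishing once and for all that $p_n$ is strictly increasing on $[1,+\infty)$ for every $n\ge3$ (so that the sign of $p_n$ at a test point determines the position of the test point relative to $\lambda_n$), which follows from $p_n'(x)=nx^{n-1}-2\ge n\cdot1-2>0$ on that interval. Everything else reduces to the substitution $\lambda_n^{\,n}=2\lambda_n+1$ and comparing with the constants $1$ and $3/2$. Since statement (a) is cited as Proposition~3.1 of \cite{ajm}, I would either reproduce the short computation above or simply invoke that reference and give only the one-line verification of (b).
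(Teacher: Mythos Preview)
Your argument for (a) is correct in substance, though there is a sign slip: you write ``it suffices to show $p_n(\lambda_{n+1})>0$'' when in fact you need $p_n(\lambda_{n+1})<0$ (since $p_n$ is increasing on $[1,\infty)$, a negative value places $\lambda_{n+1}$ to the left of $\lambda_n$). Your computation then correctly obtains $p_n(\lambda_{n+1})=(2\lambda_{n+1}+1)(1-\lambda_{n+1})/\lambda_{n+1}<0$ and draws the right conclusion, so this is only a wording error. The paper does not give its own proof of (a); it simply cites Proposition~3.1 of \cite{ajm}, so your self-contained verification is fine.

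Your argument for (b), however, contains a genuine arithmetic error, and at $n=3$ it cannot be repaired. You assert $(3/2)^3=27/8>4$, but $27/8=3.375<4$; hence $p_3(3/2)=27/8-4=-5/8<0$, so $\lambda_3>3/2$. Indeed $\lambda_3=(1+\sqrt{5})/2\approx1.618$, whence $\lambda_3^{\,3}=2\lambda_3+1=2+\sqrt{5}\approx4.236>4$ and therefore $\sqrt[3]{4}<\lambda_3$: statement (b) is \emph{false} for $n=3$ as literally stated. Your method does go through for every $n\ge4$, since $(3/2)^4=81/16>4$ and $(3/2)^n$ increases in $n$, giving $\lambda_n<3/2$ and $\lambda_n^{\,n}=2\lambda_n+1<4$. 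The paper merely calls (b) ``an easy exercise'' and uses it only through Corollary~\ref{n4} in the triple-chain case of the proof of Theorem~\ref{Tachin}; a triple chain needs two inner points and three discrete components, which already forces $n\ge4$, so the range $n\ge4$ is all that is actually required. You should either restrict (b) to $n\ge4$ or note explicitly that the $n=3$ case is excluded.
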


\section{Block structures, skeletons and $\pi$-reducibility}\label{S3}
The zero entropy tree patterns will play a central role in this paper. The characterization
of such patterns was first given in \cite{aglmm}, and another description was proven
to be equivalent in \cite{reducibility}. We will use this second approach, and this section is devoted
to recall the necessary notions and results. The characterization of zero entropy periodic patterns
relies on the notion of \emph{block structure}, that is classic in the field of Combinatorial Dynamics. In
the literature one can find several kinds of block structures and related notions for periodic orbits.
In the interval case, the Sharkovskii's \emph{square root construction} \cite{shar} is an early example
of a block structure. The notion of \emph{extension}, first appeared in \cite{block2}, gives
rise to some particular cases of block structures. Also the notion of \emph{division},
introduced in \cite{lmpy} for interval periodic orbits and generalized in \cite{ay1} in order to study the
entropy and the set of periods for tree maps, is a particular case of block structure.

\begin{remark}\label{standing1}
All patterns considered in this paper will be periodic. Given an $n$-periodic pattern $\mathcal{P}$, by abuse
of language we will speak about the \emph{points} of $\mathcal{P}$, and by default we will consider that such points
are time labeled with the integers $\{0,1,\ldots,n-1\}$. Often we will identify a point in $\mathcal{P}$ with its
time label. In agreement with such conventions, the points of the patterns shown in the pictures will be
simply integers in the range $[0,n-1]$. See for instance Figure~\ref{ull2blocks}.
\end{remark}

A pattern will be said to be \emph{trivial} if it has only one discrete component. It is easy
to see that the entropy of any trivial pattern is zero.

\label{bs}
Let $\mathcal{P}=([T,P],[f])$ be a nontrivial $n$-periodic pattern with $n\ge3$. For $n>p\ge2$, we will say that
$\mathcal{P}$ \emph{has a $p$-block structure} if there exists a partition $P=P_0\cup
P_1\cup\ldots\cup P_{p-1}$ such that $f(P_i)=P_{i+1\bmod p}$ for $i\ge0$,
and $\chull{P_i}_T\cap P_j=\emptyset$ for $i\ne j$. In this case, $p$ is a strict divisor of $n$ and
$|P_i|=n/p$ for $0\le i<p$. The sets $P_i$ will be called \emph{blocks}, and the blocks will be said to be
\emph{trivial} if each $P_i$ is contained in a single discrete component of $\mathcal{P}$ (equivalently, each pattern
$([\chull{P_i}_T,P_i],[f^p])$ is trivial). Note that
$\mathcal{P}$ can have several block structures, but only one $p$-block structure for any given divisor $p$ of $n$.
If $\mathcal{P}$ has structures of trivial blocks, the one with blocks with maximum cardinality will be called
a \emph{maximal structure}.

From the equivalence relation which defines the class of models belonging to the pattern $\mathcal{P}$ it easily follows
that the notions defined in the previous paragraph do not depend on the particular model $(T,P,f)$ representing $\mathcal{P}$.

\begin{remark}[{\bf Standing convention}]\label{standing2}
Let $\mathcal{P}$ be an $n$-periodic pattern whose points are time labeled as $\{0,1,\ldots,n-1\}$. When $\mathcal{P}$ has
a block structure of $p$ blocks $P_0\cup P_1\cup\ldots\cup P_{p-1}$, by convention we will always assume that the time labels
of the blocks have been chosen in such a way that $0\in P_0$.
\end{remark}

Let $(T,P,f)$ be the canonical model of $\mathcal{P}$. A $p$-block structure
$P_0\cup P_1\cup\ldots\cup P_{p-1}$ for $\mathcal{P}$ will be said to be \emph{separated} if $\chull{P_i}_T\cap\chull{P_j}_T=\emptyset$
for $i\ne j$. Note that the separability of a block structure for a pattern depends on the particular topology of its canonical model
and, in consequence, cannot be determined directly from the combinatorial data of $\mathcal{P}$ a priori. However, recall that the canonical
model of a pattern $\mathcal{P}$ is unique and can be algorithmically computed from $\mathcal{P}$. So, this is an intrinsic notion.

In Figure~\ref{ull2blocks} we show an example of a 8-periodic pattern $\mathcal{P}$ admitting
both a 4-block structure given by $P_0=\{0,4\}$, $P_1=\{1,5\}$, $P_2=\{2,6\}$, $P_3=\{3,7\}$
and a 2-structure given by $Q_0=\{0,2,4,6\}$, $Q_1=\{1,3,5,7\}$. Note that in both cases the blocks are trivial,
and $Q_0\cup Q_1$ is a maximal structure by definition. As it has been said, one can determine these block structures directly
in the combinatorial representation of $\mathcal{P}$, without checking any particular topology. See Figure~\ref{ull2blocks} (left).
On the contrary, to determine the separability of a block structure one has to construct the canonical model of $\mathcal{P}$,
which is shown in the same figure (right). Here we see that $Q_0\cup Q_1$ is separated, while $P_0\cup P_1\cup P_2\cup P_3$
is not (the convex hulls of the blocks $P_0$ and $P_2$, which are respectively the intervals $[0,4]$ and $[2,6]$, intersect at the vertex $a$).

\begin{figure}
\centering
\includegraphics[scale=0.65]{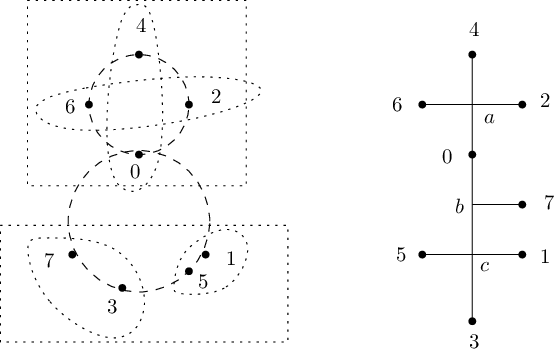}
\caption[fck]{Left: an 8-periodic pattern $\mathcal{P}$ admitting two block structures with trivial blocks.
Right: the canonical model $(T,P,f)$ of $\mathcal{P}$, for which the images of the vertices are $f(a)=c$,
$f(b)=0$ and $f(c)=a$.\label{ull2blocks}}
\end{figure}

Let $\mathcal{P}$ be an $n$-periodic pattern and let $(T,P,f)$ be the
canonical model of $\mathcal{P}$. Let $P=P_0\cup P_1\cup\ldots\cup
P_{p-1}$ be a separated $p$-block structure for $\mathcal{P}$. Then,
$f(\chull{P_i})=\chull{P_{i+1\bmod p}}$. The \emph{skeleton of $\mathcal{P}$}
(associated to this block structure) is a $p$-periodic pattern $\mathcal{S}$ defined as follows.
Consider the tree $S$ obtained from $T$ by collapsing each tree
$\chull{P_i}$ to a point $x_i$. Let $\map{\kappa}{T}[S]$ be the
standard projection, which is bijective on $T\setminus\cup_i
\chull{P_i}$ and satisfies $\kappa(\chull{P_i})=x_i$. Set
$Q=\kappa(P)=\{x_0,x_1,\ldots,x_{p-1}\}$ and define $\map{\theta}{Q}$ by
$\theta(x_i)=x_{i+1\bmod p}$. Then the \emph{skeleton} $\mathcal{S}$ of $\mathcal{P}$
is defined to be the $p$-periodic pattern $([S,Q],[\theta])$.

\begin{remark}[{\bf Standing convention}]\label{standing3}
Let $\mathcal{P}$ be an $n$-periodic pattern whose points are time labeled as $\{0,1,\ldots,n-1\}$. Assume that
$\mathcal{P}$ has a separated $p$-block structure. From the convention established in Remark~\ref{standing2}, each point
of $\mathcal{P}$ labeled as $i$ belongs to the block $P_{i\bmod{p}}$. From now on we adopt the convention that the
$p$ points of the skeleton have time labels $\{0,1,\ldots,p-1\}$ such that the point $i$ of the skeleton corresponds
to the collapse of the block $P_i$.
\end{remark}

\begin{example}\label{exesk}
Let us see an example of construction of the skeleton. Consider the 8-periodic pattern $\mathcal{P}$ consisting of
two discrete components $\{0,2,6\}$, $\{0,1,3,4,5,7\}$ (Figure~\ref{topcol}, left). Then, $P_0=\{0,4\}$, $P_1=\{1,5\}$,
$P_2=\{2,6\}$, $P_3=\{3,7\}$ defines a structure of 4 trivial blocks. By checking the canonical model $(T,P,f)$, which is
shown in Figure~\ref{topcol} (center), we see that $\chull{P_i}_T\cap\chull{P_j}_T=\emptyset$ when $i\ne j$. Thus, the
structure is separated. The corresponding skeleton is obtained by collapsing the convex hull of each block
to a point, giving the 4-periodic pattern $\mathcal{S}$ shown in Figure~\ref{topcol} (right).
\end{example}

\begin{figure}
\centering
\includegraphics[scale=0.65]{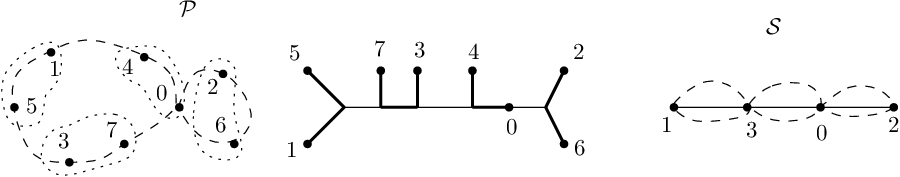}
\caption[fck]{Left: an 8-periodic pattern $\mathcal{P}$ with a separated structure of 4 trivial blocks. Center:
the canonical model $(T,P,f)$ of $\mathcal{P}$, the convex hulls of the blocks marked with thick lines.
Right: the corresponding skeleton. \label{topcol}}
\end{figure}

The entropies of a pattern $\mathcal{P}$ with a separated structure of trivial blocks and its associated
skeleton coincide, as the following result (a reformulation of Proposition~8.1 of \cite{aglmm}) states.

\begin{proposition}\label{81}
Let $\mathcal{P}$ be a pattern with a separated structure of trivial blocks. Let $\mathcal{S}$ be the
corresponding skeleton. Then, $h(\mathcal{S})=h(\mathcal{P})$.
\end{proposition}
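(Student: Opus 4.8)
Since this statement is a reformulation of Proposition~8.1 of \cite{aglmm}, one option is simply to invoke that result; but let me describe how I would prove it directly. The plan is to prove the two inequalities $h(\mathcal{S})\le h(\mathcal{P})$ and $h(\mathcal{P})\le h(\mathcal{S})$ separately, in both cases exploiting the collapsing map that defines the skeleton — once as a semiconjugacy, and once together with Bowen's inequality to bound the contribution of the collapsed fibres. The crucial combinatorial input will be the fact, already recorded just after the definition of trivial blocks, that each sub-pattern $([\chull{P_i}_T,P_i],[f^p])$ is trivial and hence has zero entropy.

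For the easy inequality $h(\mathcal{S})\le h(\mathcal{P})$, I would start from the canonical model $(T,P,f)$ of $\mathcal{P}$ and the standard projection $\map{\kappa}{T}[S]$. Separability gives $f(\chull{P_i})=\chull{P_{i+1\bmod p}}$, so $\kappa$ conjugates $f$ to a well-defined tree map $\map{\bar f}{S}$ with $\kappa\circ f=\bar f\circ\kappa$, and by construction $(S,Q,\bar f)$ is a model exhibiting $\mathcal{S}$. A continuous surjective semiconjugacy between compact metric spaces does not increase topological entropy, so $h(\bar f)\le h(f)$; combining this with the definition of $h(\mathcal{S})$ as an infimum and with Theorem~\ref{A-AGLMM} gives $h(\mathcal{S})\le h(\bar f)\le h(f)=h(\mathcal{P})$.

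For the converse inequality $h(\mathcal{P})\le h(\mathcal{S})$, the idea is to \emph{inflate} a monotone model of the skeleton. Take the canonical model $(S,Q,\theta)$ of $\mathcal{S}$ and replace each point $x_i$ by a small copy $T_i$ of the tree $\chull{P_i}$, re-attaching the edges of $S$ incident to $x_i$ at the endpoints of $T_i$ prescribed by the combinatorics of $\mathcal{P}$; let $P'$ be the union of the copies of the $P_i$ inside the resulting tree $T'$, and define a tree map $\map{g}{T'}$ that coincides with $\theta$ away from the $T_i$, sends each $T_i$ inside $T_{i+1\bmod p}$, and is such that $g^p\evalat{T_i}$ is a $P_i$-monotone model of the trivial sub-pattern (hence of zero entropy). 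One then checks that $(T',P',g)$ exhibits $\mathcal{P}$ and that collapsing the $T_i$ recovers $\theta$, so $g$ is semiconjugate to $\theta$. Bowen's inequality now gives $h(g)\le h(\theta)+\sup_{y}h(g,\text{fibre over }y)$; all fibres are single points except the $p$ trees $T_i$, which are permuted cyclically with $g^p\evalat{T_i}$ a zero-entropy map, so the fibre-entropy term vanishes and $h(g)\le h(\theta)=h(\mathcal{S})$. Since $g$ exhibits $\mathcal{P}$, this yields $h(\mathcal{P})\le h(g)\le h(\mathcal{S})$, and the two inequalities together give $h(\mathcal{P})=h(\mathcal{S})$.

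I expect the main obstacle to be the converse inequality, and within it the explicit construction of the inflated model: one must attach the collapsed trees $T_i$ to the skeleton so that the discrete components, the consecutivity relation and the time-labeled action of $\mathcal{P}$ are faithfully reproduced — not merely some pattern whose skeleton is $\mathcal{S}$ — while keeping every internal map monotone of zero entropy. The entropy bookkeeping itself (monotonicity of entropy under factors in one direction, Bowen's fibre-entropy inequality in the other) is standard for continuous self-maps of compact metric spaces; what genuinely uses the hypothesis is the vanishing of the fibre entropy, and this rests precisely on the triviality of the blocks: if the sub-patterns on the $\chull{P_i}$ had positive entropy, only the inequality $h(\mathcal{S})\le h(\mathcal{P})$ would remain valid.
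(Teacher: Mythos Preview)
The paper does not give its own proof of this proposition: it is stated as ``a reformulation of Proposition~8.1 of \cite{aglmm}'' and is simply invoked. Your first sentence already matches the paper's treatment exactly, so in that sense there is nothing further to compare.

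Your direct argument goes beyond what the paper does, and the strategy is sound. The factor inequality $h(\mathcal{S})\le h(\mathcal{P})$ via the collapse semiconjugacy is unproblematic (the identity $f(\chull{P_i})=\chull{P_{i+1\bmod p}}$, which you need for $\kappa$ to induce a well-defined map on $S$, is exactly what the paper records just before defining the skeleton, and it uses $P$-monotonicity of the canonical model, not separability alone). For the converse, inflating a monotone model of $\mathcal{S}$ and invoking Bowen's bound $h(g)\le h(\theta)+\sup_y h\bigl(g,\kappa^{-1}(y)\bigr)$ is the natural route; the fibre term vanishes because the nontrivial fibres $T_i$ are cyclically permuted and $h(g,T_i)=\frac{1}{p}\,h(g^p\evalat{T_i})=0$ by triviality of the block sub-patterns. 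You have correctly identified the only genuine work: attaching the $T_i$ to $S$ so that the resulting pointed tree has exactly the discrete components of $\mathcal{P}$. This is a combinatorial bookkeeping exercise rather than an obstacle --- the attachment points are dictated by which points of $P_i$ lie in discrete components of $\mathcal{P}$ meeting other blocks --- but it does have to be written out carefully if you want a self-contained proof.
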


Going back to Example~\ref{exesk}, note that the obtained skeleton $\mathcal{S}$ is a zero entropy
interval pattern. Then, $h(\mathcal{P})=0$ by Proposition~\ref{81}.

As a consequence of Proposition~\ref{81} we have the following result, that will be used in the proof
of the main theorem of this paper.

\begin{corollary}\label{skeletonok}
Let $\mathcal{P}$ an $n$-periodic pattern with a separated structure of $p$ trivial blocks. Let
$\mathcal{S}$ be the corresponding skeleton. If $h(\mathcal{S})\ge\log(\lambda_p)$, then $h(\mathcal{P})>\log(\lambda_n)$.
\end{corollary}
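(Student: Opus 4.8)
The plan is to combine Proposition~\ref{81} with the strict monotonicity of the sequence $(\lambda_m)_m$. Recall first that, by the very definition of a $p$-block structure, $p$ is a \emph{strict} divisor of $n$, so in particular $2\le p<n$.

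First I would invoke Proposition~\ref{81}: since the given $p$-block structure is separated and has trivial blocks, its skeleton $\mathcal{S}$ satisfies $h(\mathcal{S})=h(\mathcal{P})$. Together with the hypothesis $h(\mathcal{S})\ge\log(\lambda_p)$ this already gives $h(\mathcal{P})\ge\log(\lambda_p)$. So the whole statement reduces to the (elementary) inequality $\log(\lambda_p)>\log(\lambda_n)$, i.e. to $\lambda_p>\lambda_n$.

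When $p\ge3$ this is immediate from Proposition~\ref{propietats}(a): it gives $\lambda_{m+1}<\lambda_m$ for all $m\ge3$, so $\lambda_p>\lambda_{p+1}>\cdots>\lambda_n$ because $p<n$. The only case not literally covered by that proposition is $p=2$, which one settles directly with the polynomial $g_m(x)=x^m-2x-1$. Using $\lambda_p^{\,p}=2\lambda_p+1$ one computes, for $n>p$,
\[
g_n(\lambda_p)=\lambda_p^{\,n}-2\lambda_p-1=\lambda_p^{\,n}-\lambda_p^{\,p}=\lambda_p^{\,p}\bigl(\lambda_p^{\,n-p}-1\bigr)>0,
\]
since $\lambda_p>1$. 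As $g_n(1)=-2<0$, $g_n(x)\to+\infty$ when $x\to+\infty$, and $\lambda_n$ is the \emph{unique} root of $g_n$ in $(1,+\infty)$, the function $g_n$ is negative on $(1,\lambda_n)$ and positive on $(\lambda_n,+\infty)$; hence $g_n(\lambda_p)>0$ together with $\lambda_p>1$ forces $\lambda_p>\lambda_n$. (In fact this argument works for every $2\le p<n$ and could replace the appeal to Proposition~\ref{propietats}(a) altogether.)

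Combining the two steps, $h(\mathcal{P})\ge\log(\lambda_p)>\log(\lambda_n)$, which is the assertion. There is no genuine obstacle here: all the substance is in Proposition~\ref{81}, and the remainder is the elementary fact that $m\mapsto\lambda_m$ is strictly decreasing; the only mild point of care is that $p$ may be $2$, just outside the range of Proposition~\ref{propietats}, which is why the direct polynomial argument is included.
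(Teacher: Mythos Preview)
Your proof is correct and follows exactly the same approach as the paper: invoke Proposition~\ref{81} to get $h(\mathcal{P})=h(\mathcal{S})\ge\log(\lambda_p)$, then use Proposition~\ref{propietats}(a) (strict decrease of $\lambda_m$) together with $p<n$. Your extra care about $p=2$ is harmless but in fact unnecessary: any $2$-periodic skeleton has entropy zero, so the hypothesis $h(\mathcal{S})\ge\log(\lambda_2)>0$ is vacuous in that case, and the paper implicitly relies on $p\ge3$ when the corollary is actually applied.
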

\begin{proof}
Since $p$ is a strict divisor of $n$, it is a direct consequence of Propositions~\ref{81} and \ref{propietats}(a).
\end{proof}

The existence of a separated structure of trivial blocks for a pattern $\mathcal{P}$ has a strong connection with
the path transition matrix of $\mathcal{P}$, via the iterative behaviour of some particular basic paths of $\mathcal{P}$.
Let us explain it. Let $\mathcal{P}$ be a periodic pattern and let $\pi$ be a basic path of $\mathcal{P}$. Consider any
model $(T,P,f)$ of $\mathcal{P}$. For $k\ge1$,
we will say that $\pi$ \emph{splits in $k$ iterates} \label{pagesplit} if $f^i(\pi)$ is a basic path of $\mathcal{P}$ for $0\le i<k$ and $f^k(\pi)$
is not a basic path of $\mathcal{P}$. Equivalently, $f^i(\pi)$ only $f$-covers
$f^{i+1}(\pi)$ for $0\le i<k$ and $f^{k-1}(\pi)$ $f$-covers at least two different basic paths. We say that a basic path
$\pi$ \emph{never splits} if $f^i(\pi)$ is a basic path for every $i\ge0$. In this case, we will say that $\mathcal{P}$
is \emph{$\pi$-reducible}. As an example, the path $\pi=\{0,4\}$ for the pattern $\mathcal{P}$ in Figure~\ref{topcol}
never splits, so $\mathcal{P}$ is $\pi$-reducible. On the other hand, let $\sigma$ be the path $\{4,7\}$ on the same pattern. Note that
$f(\sigma)=\{5,0\}$ is a basic path, while $f^2(\sigma)=\{6,1\}$ is not. Then, $\sigma$ splits in 2 iterates and $f^2$-covers
the two basic paths $\{6,0\}$ and $\{0,1\}$.

The $\pi$-reducibility of a pattern with respect to a basic path $\pi$ is equivalent to
the existence of a separated structure of trivial blocks, as the following result states.

\begin{proposition}\label{twocharact}
Let $\mathcal{P}$ be a periodic pattern. Then, $\mathcal{P}$ is $\pi$-reducible for a basic path $\pi$ if and only if
$\mathcal{P}$ has a maximal and separated structure of trivial blocks. In this case, $\mathcal{P}$ is $\sigma$-reducible for any
basic path $\sigma$ contained in a block.
\end{proposition}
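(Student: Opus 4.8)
The plan is to prove the two implications separately, using the canonical model $(T,P,f)$ of $\mathcal{P}$ throughout so that the notion of separability is available. For the forward implication, assume $\mathcal{P}$ is $\pi$-reducible for some basic path $\pi=\{a,b\}$, meaning $f^i(\pi)$ is a basic path for all $i\ge0$. Since $P$ is $n$-periodic and $a,b$ lie in a common discrete component, the images $f^i(a)$ and $f^i(b)$ are consecutive points of $P$ for every $i$, so in particular the intervals $[f^i(a),f^i(b)]$ contain no point of $P$ in their interior. The first step is to let $p$ be the smallest positive integer such that $f^p(\{a,b\})$ shares a point with $\{a,b\}$ — more precisely, I would track the orbit of the pair and use that $a,b$ differ by a fixed offset. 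Concretely: set $P_j=\set{f^i(a)}{i\equiv j\ (\bmod p)}$ for an appropriate $p$, where $p$ is chosen so that the offset between consecutive labels in the $\pi$-orbit closes up; then show $f(P_j)=P_{j+1\bmod p}$ by construction, and show $\chull{P_j}_T\cap P_\ell=\emptyset$ for $j\ne\ell$ by using that the paths $f^i(\pi)$ never split — a splitting would be exactly the obstruction to the convex hulls staying disjoint from the other labels. That this structure is trivial is immediate since each $P_j$ sits inside one discrete component (it is a union of basic-path endpoints that are pairwise consecutive along the $\pi$-orbit); that it is maximal requires comparing with any larger trivial block structure and invoking the uniqueness of a $q$-block structure for each divisor $q$ (stated in Section~\ref{S3}).

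For the reverse implication, suppose $\mathcal{P}$ has a maximal, separated structure of trivial blocks $P_0\cup\dots\cup P_{p-1}$. Pick any block, say $P_0$, and any basic path $\sigma=\{a,b\}$ with $a,b\in P_0$ (such a path exists because the blocks are trivial, hence lie in a discrete component, and $|P_0|=n/p\ge2$ since $\mathcal{P}$ is nontrivial and $p$ is a strict divisor). I claim $f^i(\sigma)$ is a basic path for every $i$. Indeed $f^i(\sigma)\subset\chull{P_{i\bmod p}}_T$, and since the structure is separated, $\chull{f^i(\sigma)}_T\subset\chull{P_{i\bmod p}}_T$ meets no point of any other block; moreover $f^i(\sigma)$ has exactly two endpoints in $P_{i\bmod p}$. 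The remaining point is to rule out that some third point of $P_{i\bmod p}$ itself lies strictly between $f^i(a)$ and $f^i(b)$. Here is where I expect the main obstacle: one must show the two endpoints stay consecutive within their block. I would argue by contradiction — if they ever fail to be consecutive, then the preimage structure inside $\chull{P_0}_T$ would force a proper invariant subtree refinement, contradicting the maximality of the block structure (a finer trivial block structure, or a trivial block structure whose collapse has more points, would appear). This maximality argument is the delicate step and I would isolate it as the core of the proof.

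Finally, the last sentence of the statement — that $\mathcal{P}$ is $\sigma$-reducible for \emph{any} basic path $\sigma$ contained in a block — follows from the reverse-implication argument above, since nothing there used a specific choice of $\sigma$ within $P_0$, and by the $f$-equivariance of the block structure the same holds for a basic path inside any $P_j$. I would close by remarking that combining both implications also yields that the value of $p$ is intrinsic: it is simultaneously the period of the skeleton and the number of blocks in the (unique) maximal separated trivial structure, so the notion of $\pi$-reducibility does not depend on which block-contained path $\pi$ one picks.
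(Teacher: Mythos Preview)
Your reverse implication contains a phantom difficulty. Recall that a basic path is simply a pair of points lying in a single discrete component, and a discrete component is by definition a set of \emph{pairwise consecutive} points. So if $\sigma=\{a,b\}$ lies in a trivial block $P_0$ (hence in one discrete component), then for every $i$ the pair $f^i(\sigma)$ lies in the trivial block $P_{i\bmod p}$, hence in one discrete component, hence is automatically a basic path. There is nothing more to check: no third point can obstruct this, and neither maximality nor separatedness is needed. This is exactly why the paper disposes of the ``if'' part and the final sentence in one line (``easily follow from the definition of a trivial block structure''). Your proposed contradiction argument via a finer block structure is superfluous.

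For the forward implication the paper simply cites Proposition~9.5 of \cite{ajm}; you instead try to reprove it, and here there is a genuine gap. Your proposed $p$ (the first return time of the pair $\{a,b\}$ to itself) need not divide $n$, so the residue classes $P_j=\{f^i(a):i\equiv j\ (\bmod p)\}$ need not even partition $P$. The construction actually used in \cite{ajm}, as the paper recalls in the proof of Lemma~\ref{inter-blocks}, is different: the blocks are the sets of endpoints of the connected components of $\bigcup_{i\ge0}\chull{f^i(\pi)}$ in the canonical tree. This automatically gives the right number of blocks and makes separatedness evident, whereas your sketch never addresses separatedness (you only write $\chull{P_j}\cap P_\ell=\emptyset$, which is the block-structure condition, not $\chull{P_j}\cap\chull{P_\ell}=\emptyset$). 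Maximality also requires a short argument comparing with any coarser trivial structure, which you acknowledge but do not carry out. In short: the easy half is easier than you think, and the hard half needs the connected-components construction rather than residue classes.
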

\begin{proof}
The `only if' part of the first statement is Proposition~9.5 of \cite{ajm}, while its `if' part and the second claim
easily follow from the definition of a trivial block structure.
\end{proof}

\section{A mechanism to compare entropies}\label{S4}
Another key ingredient to prove Theorem~\ref{Tachin} is a tool, first introduced in \cite{ajm}, that allows us to compare the entropies of two patterns $\mathcal{P}$ and $\mathcal{O}$ when $\mathcal{O}$ has been obtained by joining together several discrete components of $\mathcal{P}$. For the sake of brevity, here we will give a somewhat informal (though completely clear) version of this procedure.

Let $(T,P,f)$ be a model of a pattern $\mathcal{P}$. We recall that two discrete components of $(T,P)$ are either disjoint or intersect at a single point of $P$. Two discrete
components $A,B$ of $(T,P)$ will be said to be \emph{adjacent at $x\in P$} (or simply \emph{adjacent}) if $A\cap B=\{x\}$. A point $z\in P$ will be said to be \emph{inner}
if $z$ belongs to $k\ge2$ discrete components of $(T,P)$, all being pairwise adjacent at $z$.

Now let $x\in P$ be an inner point and let $A,B$ be two discrete components adjacent at $x$. If we join together $A$ and $B$ to get a new discrete component $A\cup B$ and keep intact the remaining components, we get a new pattern $\mathcal{O}$. We will say that $\mathcal{O}$ is an \emph{opening of $\mathcal{P}$} (with respect to the inner point $x$ and the discrete components $A$ and $B$). As an example, see Figure~\ref{brutty2}, where $\mathcal{O}$ is an opening of $\mathcal{P}$ with respect to the inner point 5 and the discrete components $A=\{2,5,6\}$ and $B=\{0,5\}$, while $\mathcal{R}$ is an opening of $\mathcal{P}$ with respect to the inner point 5 and the discrete components $B$ and $C=\{1,3,5\}$.

\begin{figure}
\centering
\includegraphics[scale=0.65]{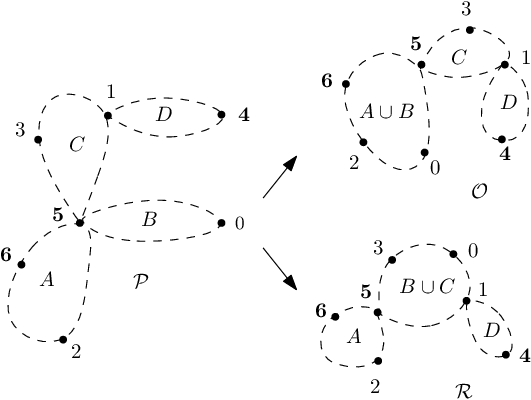}
\caption[fck]{Two different openings of $\mathcal{P}$.\label{brutty2}}
\end{figure}

\begin{remark}[{\bf Standing convention}]\label{standing4}
As it is clear from the examples shown in Figure~\ref{brutty2}, we are implicitly assuming that the labeling of the points
of an $n$-periodic pattern $\mathcal{P}$ fixes the labeling of the points of any opening of $\mathcal{P}$.
\end{remark}

As one may expect from intuition, the entropy of a model decreases when performing an opening, as the following result (Theorem~5.3 of \cite{ajm}) states.
\begin{theorem}\label{5.3}
Let $\mathcal{P}$ and $\mathcal{O}$ be $n$-periodic patterns. If $\mathcal{O}$ is an opening of $\mathcal{P}$, then $h(\mathcal{P})\ge h(\mathcal{O})$.
\end{theorem}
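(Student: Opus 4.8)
The plan is to work entirely with path transition matrices. By Remark~\ref{patent}, $h(\mathcal{Q})=\log\max\{\rho(M_\mathcal{Q}),1\}$ for every pattern $\mathcal{Q}$, so it suffices to prove the combinatorial inequality $\rho(M_\mathcal{O})\le\rho(M_\mathcal{P})$. Fix a monotone model $(T,P,f)$ of $\mathcal{P}$, and let $A,B$ be the two discrete components joined at the inner point $x$ to form the component $A\cup B$ of $\mathcal{O}$. First I would set up the bookkeeping of basic paths: every basic path of $\mathcal{P}$ is again a basic path of $\mathcal{O}$, and the only new basic paths of $\mathcal{O}$ are the \emph{crossing paths} $\tau=\{a,b\}$ with $a\in A\setminus\{x\}$, $b\in B\setminus\{x\}$. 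Each crossing path $\tau$ has two \emph{halves}, namely the basic paths $\tau^-=\{a,x\}$ and $\tau^+=\{x,b\}$ of $\mathcal{P}$, and its connected hull decomposes as $\langle\tau\rangle=\langle\tau^-\rangle\cup\langle\tau^+\rangle$, a union of two arcs meeting only at $x$.

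The next step is to describe how the crossing paths sit in the $\mathcal{O}$-path graph. The key fact is that $f$ carries $A$ and $B$ to discrete components of $\mathcal{O}$ that are again adjacent, at the point $f(x)$ — here one uses that $f(x)\neq x$, which holds because $P$ is a single periodic orbit of period $n\ge3$ — so that likewise $\langle f(\tau)\rangle=\langle f(\tau^-)\rangle\cup\langle f(\tau^+)\rangle$, a union of two arcs meeting only at $f(x)$. Since the connected hull of a basic path is an arc containing no further point of $P$, it cannot straddle a branch point; from this I would deduce that a basic path $\sigma$ $f$-covers $\tau$ if and only if it $f$-covers \emph{both} $\tau^-$ and $\tau^+$. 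Thus, in the $\mathcal{O}$-path graph, the in-neighbours of a crossing path are exactly the common in-neighbours of its two halves. The out-neighbours of a crossing path are similarly controlled by those of its halves, the one subtle case being crossing-to-crossing covers, where the image hull $\langle f(\tau)\rangle$ may straddle $f(x)$ and one has to follow the two halves of the target separately.

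The third step, which I expect to be the main obstacle, is to turn these relations into $\rho(M_\mathcal{O})\le\rho(M_\mathcal{P})$. The idea I would pursue is to take a non-negative Perron eigenvector $v$ of $M_\mathcal{O}$ and fold it onto the coordinates of $M_\mathcal{P}$ by redistributing each crossing weight $v_\tau$ symmetrically onto its two halves (half to $\tau^-$, half to $\tau^+$), leaving every other coordinate unchanged; call the resulting vector $u\ge0$. Using the ``common in-neighbour'' property and the elementary inequality $\tfrac12 p+\tfrac12 q\ge pq$ for $p,q\in\{0,1\}$, one checks cleanly that $(M_\mathcal{P}u)_\pi\ge\rho(M_\mathcal{O})\,u_\pi$ for every basic path $\pi$ of $\mathcal{P}$ that is \emph{not} a half of a crossing path: the weight of each crossing path that $\pi$ $f$-covers is exactly balanced, as $\tfrac12+\tfrac12$, on its two halves, which $\pi$ also $f$-covers. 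The delicate part is the same inequality at the ``half'' coordinates $\tau^\pm$: there the outflow of $\tau^-$ in $M_\mathcal{O}$ includes contributions from all the crossing paths having $\tau^-$ as a half, and a crude estimate absorbs these at the cost of a multiplicative factor of order $\max\{|A|,|B|\}$, which is fatal. Making the redistribution absorb those contributions with no such loss — this is where the crossing-to-crossing covering relations from the previous step really have to be exploited — is the crux of the argument. Once $(M_\mathcal{P}u)\ge\rho(M_\mathcal{O})\,u$ is established entrywise, $\rho(M_\mathcal{P})\ge\rho(M_\mathcal{O})$ follows from the standard fact that a non-negative matrix $M$ with $Mu\ge\mu u$ for some $u\ge0$, $u\neq0$ has $\rho(M)\ge\mu$, and Remark~\ref{patent} then yields $h(\mathcal{P})\ge h(\mathcal{O})$.

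A more geometric alternative would be to open up a monotone model of $\mathcal{P}$ directly — replacing the branch point at $x$ by a short edge, regrouping $A$ and $B$ into a single star, and extending $f$ monotonically across the new edge — and then to bound the entropy of the resulting model of $\mathcal{O}$ by a Bowen-type inequality whose only non-trivial fibre is the new edge, which carries zero fibre entropy. However, arranging the surgery so that it actually yields a model of $\mathcal{O}$ (with $P$ invariant and the correct discrete components) is itself delicate, so I would keep the path-matrix computation as the primary route.
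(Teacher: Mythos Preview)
The paper does not prove this result; it is cited as Theorem~5.3 of \cite{ajm}, so there is no in-paper argument to compare against.

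Your matrix route has a gap that precedes the ``crux'' you already flag as unresolved. The ``key fact'' that $f$ carries $A$ and $B$ into discrete components of $\mathcal{O}$ adjacent at $f(x)$ is false in general: $f|_P$ need not map a discrete component into a single discrete component, and with it the decomposition $\langle f(\tau)\rangle=\langle f(\tau^-)\rangle\cup\langle f(\tau^+)\rangle$ with the two arcs meeting only at $f(x)$ fails whenever $f(x)\notin[f(a),f(b)]$, which is the generic situation. Your in-neighbour characterisation of crossing paths happens to be correct (though for a different reason than the one you give: it follows because, for $w\neq x$, membership of $w$ in $[u,v]\cap P$ is the same in any model of $\mathcal{P}$ as in any model of $\mathcal{O}$, together with tree convexity). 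The out-neighbour side, however, is where the approach breaks: the out-arrows of $\tau$ in the $\mathcal{O}$-path graph are governed by $[f(a),f(b)]$ computed in a tree model of $\mathcal{O}$, and when $f(x)$ does not separate $f(a)$ from $f(b)$ this arc is not controlled by $[f(a),f(x)]$ and $[f(x),f(b)]$ in the model of $\mathcal{P}$. That is exactly why the half-coordinate inequality resists the folding trick, and I do not see how to repair it within this framework.

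Your geometric alternative is the viable approach and is, in spirit, what one expects the argument in \cite{ajm} to be: from the canonical model $(T,P,f)$ of $\mathcal{P}$, construct by surgery a tree $T'$ realising the discrete-component structure of $\mathcal{O}$ together with a map $f'$ so that $(T',P,f')$ exhibits $\mathcal{O}$ with $h(f')\le h(f)$; then $h(\mathcal{O})\le h(f')\le h(f)=h(\mathcal{P})$. Drop the matrix computation and develop this.
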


We finish this section stating that the property for a pattern of having a block structure is preserved by openings.
The result is a direct consequence of the definition of a block structure and the fact that no new inner points are created
after performing an opening.

\begin{lemma}\label{openingblock}
Let $\mathcal{P}$ be a periodic pattern with a block structure and let $\mathcal{O}$ be an opening of $\mathcal{P}$. Then,
$\mathcal{O}$ has a block structure.
\end{lemma}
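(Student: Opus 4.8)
The plan is to verify directly that the $p$-block partition of $\mathcal{P}$ is again a $p$-block structure for $\mathcal{O}$. By the standing convention of Remark~\ref{standing4}, $\mathcal{O}$ and $\mathcal{P}$ have the same set $P$ of time-labeled points and the same action of $f$ on $P$; hence, if $P=P_0\cup\cdots\cup P_{p-1}$ is a $p$-block structure of $\mathcal{P}$ (so that $P_i$ consists of the points whose label is congruent to $i$ modulo $p$), the identity $f(P_i)=P_{i+1\bmod p}$ trivially persists in $\mathcal{O}$. Thus the entire content of the statement is the convex-hull condition: for a model $(T',P,f')$ of $\mathcal{O}$ one must show $\chull{P_i}[T']\cap P_j=\emptyset$ whenever $i\ne j$.

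The first step is to restate this condition combinatorially. Using that $v\in\chull{S}[T]$ if and only if $v\in S$ or $S$ meets at least two connected components of $T\setminus\{v\}$, one obtains: $\chull{P_i}[T]\cap P_j=\emptyset$ for all $j\ne i$ is equivalent to requiring that, for every $v\in P\setminus P_i$, the whole block $P_i$ lies in a single connected component of $T\setminus\{v\}$. (This makes manifest, as already noted in Section~\ref{S3}, that the condition depends only on the pattern, not on the chosen model.)

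The second step is the only place where the specific nature of an opening enters, and it is where I expect the (mild) work to be. Let $x\in P$ be the inner point at which $\mathcal{O}$ is obtained from $\mathcal{P}$ by joining the two discrete components $A,B$ adjacent at $x$. Since the opening is a local modification at $x$ — it leaves all other discrete components untouched and only amalgamates, near $x$, the branches occupied by $A$ with those occupied by $B$ — the following holds: for every $v\in P$ with $v\ne x$, the partition of $P\setminus\{v\}$ into the connected components of $T'\setminus\{v\}$ coincides with the one given by the components of $T\setminus\{v\}$; and for $v=x$, the partition given by $T'\setminus\{x\}$ is a coarsening of the one given by $T\setminus\{x\}$ (at most, the component containing $A\setminus\{x\}$ and the one containing $B\setminus\{x\}$ get merged). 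In particular, no component of $T\setminus\{v\}$ is ever broken apart; this is precisely the branch-level form of the assertion that an opening creates no new inner points. Here one merely spells out the definition of the opening model (equivalently, inspects how the canonical model is built), and situations such as the one in Figure~\ref{brutty2} display the typical picture. I regard this step as the main — indeed the only — obstacle.

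The last step combines the two. Fix $i$ and $v\in P\setminus P_i$; we must check that $P_i$ lies in a single component of $T'\setminus\{v\}$, knowing this holds in $T$. If $v\ne x$ the component partition is unchanged and there is nothing to prove. If $v=x$, then in particular $x\notin P_i$, so by hypothesis $P_i$ is contained in a single component $K$ of $T\setminus\{x\}$; since the partition for $T'\setminus\{x\}$ is coarser, $P_i$ is contained in the unique component of $T'\setminus\{x\}$ that contains $K$. Hence $\chull{P_i}[T']\cap P_j=\emptyset$ for all $j\ne i$, so $P=P_0\cup\cdots\cup P_{p-1}$ is a $p$-block structure for $\mathcal{O}$, and in particular $\mathcal{O}$ has a block structure.
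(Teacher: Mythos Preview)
Your proof is correct and follows essentially the same route as the paper, which dispatches the lemma in one line as ``a direct consequence of the definition of a block structure and the fact that no new inner points are created after performing an opening.'' Your argument is a careful unpacking of precisely this: your combinatorial reformulation in step~2 is correct, and your step~3 (that for every $v\in P$ the $v$-branch partition is unchanged when $v\ne x$ and merely coarsened when $v=x$) is the exact content behind the paper's informal phrase---indeed it is what the paper later records as Remark~\ref{branchop}.
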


\section{Strategy of the proof of Theorem~\ref{Tachin}}\label{S5}
In this section we give a general overview of the proof of Theorem~\ref{Tachin}, in order
to justify the need for the several techniques and results deployed in the subsequent sections.

We will prove Theorem~\ref{Tachin} by induction on the period $n$. So, assume that we have
an $n$-periodic pattern $\mathcal{P}$ and that the result is true for every pattern with period less than $n$.

The first step is a simplification process based on the opening mechanism. Recall (Theorem~\ref{5.3}) that
after performing an opening on $\mathcal{P}$, the entropy $h$ of the obtained pattern is less or equal to
$h(\mathcal{P})$. If $h$ is still positive, we can perform again an opening and so on, until we get
a pattern with positive entropy such that every new opening leads to entropy zero. In other words,
we can assume that $\mathcal{P}$ satisfies the following property:
\begin{equation}\label{opening0}
\mbox{Every opening of $\mathcal{P}$ is a zero entropy pattern}.\tag{$\star$}
\end{equation}\label{pagina}
Property (\ref{opening0}) is very restrictive and has a strong consequence: a pattern
satisfying (\ref{opening0}) is, \emph{generically}, $\pi$-reducible. More precisely,
we have the following result, that will be proved in Section~\ref{S7}.

\begin{MainTheorem}\label{2inners_3op}
Let $\mathcal{P}$ be an $n$-periodic pattern with positive entropy such that any opening of $\mathcal{P}$ has entropy zero.
Assume that $\mathcal{P}$ has at least two inner points and at least three openings. Then, $\mathcal{P}$ is $\pi$-reducible
for some basic path $\pi$.
\end{MainTheorem}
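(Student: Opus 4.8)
The hypothesis is very rigid: $\mathcal{P}$ has positive entropy but every opening is zero entropy, and there are at least two inner points and at least three discrete components (hence at least three openings). I want to show that $\mathcal{P}$ is $\pi$-reducible for some basic path $\pi$, which by Proposition~\ref{twocharact} is equivalent to producing a maximal separated structure of trivial blocks. My strategy is to build that block structure by hand using the iterative dynamics of a well-chosen basic path, showing that it \emph{never splits}. The starting point is: since some opening of $\mathcal{P}$ has zero entropy but $\mathcal{P}$ does not, and since openings can be composed, I can track how positive entropy is ``destroyed'' by a single opening at an inner point $x$ joining components $A,B$. The key combinatorial object is a basic path $\pi$ that, in the canonical model, witnesses the loop giving $\rho(M_{\mathcal{P}})>1$; after the opening at $x$ the corresponding path graph loses that loop, so $\pi$ (or one of its iterates) must ``cross'' the inner point $x$ at some stage.

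\textbf{Step 1: Locate a splitting path and control where it splits.} First I would take a basic path $\pi_0$ that $f$-covers at least two basic paths after finitely many iterates (such a path exists precisely because $h(\mathcal{P})>0$; if every basic path never split, $\mathcal{P}$ would already be $\pi$-reducible and we'd be done). Let $\pi$ be the first iterate $f^j(\pi_0)$ at which the splitting happens, so $\pi$ $f$-covers $\geq 2$ basic paths. The image $\chull{f(\pi)}$ then contains an inner point $x$ of $\mathcal{P}$ in its interior: this is where the ``branching'' lives. The heart of the argument is to show that, under hypothesis $(\star)$, the splitting is \emph{confined}: $f(\pi)$ must be exactly the union of two basic paths meeting at a single inner point, and moreover $f(\pi)$ cannot itself contain a full discrete component beyond that. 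If $f(\pi)$ were more spread out, one could perform an opening that still leaves a loop in the path graph, contradicting $(\star)$. This is where the hypothesis ``at least three openings'' is used: with only two discrete components there is essentially one way to open and the analysis degenerates; with $\geq 3$ components there is enough room to choose an opening that both kills entropy \emph{and} isolates the obstruction, forcing a clean local picture at $x$.

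\textbf{Step 2: Use the second inner point to propagate the block structure.} Let $x$ be the inner point found in Step~1 and let $y$ be a second inner point (here is where ``at least two inner points'' enters). Consider an opening $\mathcal{O}$ at $y$; it has zero entropy, so by the characterization of zero entropy patterns (Section~\ref{S6}) $\mathcal{O}$ has a separated structure of trivial blocks, and by Corollary~\ref{skeletonok}-type reasoning its skeleton is a zero entropy interval/tree pattern. Now pull this block structure of $\mathcal{O}$ back to $\mathcal{P}$: since an opening at $y$ only merges two components and creates no new inner points (Lemma~\ref{openingblock} and the remark preceding it), a block structure of $\mathcal{O}$ is almost a block structure of $\mathcal{P}$ — the only obstruction is that the convex hull of some block might now pass through $y$. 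I would then play the opening at $x$ against the opening at $y$: their block structures must be ``compatible'' because both refine the combinatorial data of $\mathcal{P}$, and a basic path $\pi$ lying inside a common block of both structures can never split — any split of $\pi$ would survive into at least one of the two openings (the one not resolving that particular branch point), contradicting that that opening has zero entropy.

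\textbf{Step 3: Conclude.} Having produced a basic path $\pi$ that never splits, $\mathcal{P}$ is $\pi$-reducible by definition, and Proposition~\ref{twocharact} upgrades this to a maximal separated structure of trivial blocks. I expect the \emph{main obstacle} to be Step~1–2's bookkeeping: pinning down exactly which opening to perform and proving that positive entropy cannot ``hide'' simultaneously at two different inner points without one of the openings failing to have zero entropy. Concretely, the delicate case is when the two inner points $x,y$ are adjacent (share a discrete component), since then the two openings interact and one must rule out a ``triple chain''-like local configuration — and indeed the introduction flags the triple chain as one of the exceptional patterns treated separately, which suggests that a careful count of discrete components ($\geq 3$ openings but the triple chain excluded here because it has only $2$ inner points arranged so that the hypothesis of \emph{two} inner points plus \emph{three} openings forces extra room) is exactly what makes the argument go through. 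The finitely-many-iterates structure of the path graph, together with $f$-invariance of $P\cup V(T)$ for monotone models, should keep all of this effective.
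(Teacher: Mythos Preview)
Your high-level intuition in Step~2 is correct and matches the paper's strategy: take two carefully chosen zero-entropy openings, look at the maximal trivial block of each containing a fixed reference point, and find a basic path $\pi$ lying in \emph{both} blocks; then $\pi$ cannot split in $\mathcal{P}$ because any split at an inner point would survive into at least one of the two openings. However, the proposal has a genuine gap at exactly the crucial step.

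The gap is the sentence ``their block structures must be `compatible' because both refine the combinatorial data of $\mathcal{P}$.'' This is not true in general and is the entire difficulty. If the two openings have maximal trivial blocks of cardinalities $p$ and $q$ with $\gcd(p,q)=1$, the blocks through the reference point are $\{0,\tfrac{n}{p},\ldots\}$ and $\{0,\tfrac{n}{q},\ldots\}$, whose intersection is $\{0\}$, and there is no common basic path at all. Nothing in your outline rules this out. The paper spends all of Section~\ref{S6b} building the machinery (branching sequences, bidirectional inner points via Lemma~\ref{critic}, the flower $\mathcal{F}_x(\cdot)$, and Corollary~\ref{bra4}) precisely to prove a divisibility relation $q_t\mid p_r$ between the block sizes. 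That divisibility is what forces $\tfrac{n}{q_t}\in O_0\cap R_0$ and hence produces the common basic path $\{0,\tfrac{n}{q_t}\}$. To make this work, the two openings cannot be chosen as loosely as ``open at $x$, open at $y$'': Lemma~\ref{2inners_3op_QR} does a case analysis to select openings $\mathcal{O},\mathcal{R}$ and a point $x$ so that (i) $x$ is \emph{bidirectional} in $\mathcal{O}$ (this pins down the last term of the branching sequence), and (ii) $\mathcal{F}_x(\mathcal{O})$ equals or is an opening of $\mathcal{F}_x(\mathcal{R})$ (this is what lets one compare the two branching sequences via Lemma~\ref{2flors}). None of these ingredients appear in your plan.

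Your Step~1 (locate a splitting path and the inner point where it splits) is a red herring: the paper's proof never starts from a splitting path. The reference point $x$ is chosen structurally, as a bidirectional inner point of a zero-entropy opening, not dynamically as the site of a first split. Also, when $\mathcal{P}$ has more than two inner points, opening ``at $x$'' and ``at $y$'' does not cover all inner points, so your closing argument in Step~2 (``any split would survive into one of the two openings'') would fail; the paper's Lemma~\ref{2inners_3op_QR} arranges the two openings so that every inner point of $\mathcal{P}$ remains inner in at least one of them.
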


If $\mathcal{P}$ satisfies the hypothesis of Theorem~\ref{2inners_3op}, then it is $\pi$-reducible. So, we can consider its
skeleton $\mathcal{S}$, with the same entropy but with a period that strictly divides $n$, and use the induction hypothesis.

\begin{figure}
\centering
\includegraphics[scale=0.65]{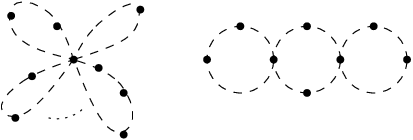}
\caption[fck]{A $k$-flower (left) and a triple chain (right).\label{trefoil}}
\end{figure}

The above argument is the core idea of the proof of Theorem~\ref{Tachin}, but we are left with two special cases for which
we cannot assure that property (\ref{opening0}) implies $\pi$-reducibility: the \emph{$k$-flowers} and the
\emph{triple chain}. A \emph{$k$-flower} is a pattern consisting on $k\ge2$ discrete components (the \emph{petals})
attached at a unique inner point. A pattern having three discrete components and two inner points
will be called a \emph{triple chain}. See Figure~\ref{trefoil}. The reader will find easy to convince that the flowers
and the triple chain are the two sort of patterns that do not satisfy the property of having at least two inner points
and at least three openings.

The cases of the $k$-flowers and the triple chain will be tackled in Sections~\ref{S8} and \ref{S9} respectively. Concerning
the $k$-flowers, the case $k=2$ is specially simple since Theorem~\ref{Tachin} follows directly from a previous result in \cite{forward}.
On the other hand, for $k\ge3$ we construct an $n'$-periodic pattern, where $n'$ is a strict divisor of $n$, whose entropy can be
put in relation with that of $\mathcal{P}$, and then we use the induction hypothesis. Finally, in the case of the triple chain
we compute directly lower bounds of the entropy by counting coverings in the $\mathcal{P}$-path graph (equivalently, entries in the
path transition matrix).

\section{Structure of zero entropy patterns}\label{S6}
Although a point is an element of a topological space and a pattern is a combinatorial object defined as an equivalence class of pointed trees,
recall that by abuse of language we talk about the \emph{points} of a pattern. The same translation from topology to combinatorics can be
applied to the terms \emph{valence}, \emph{inner point} and \emph{endpoint}. The (combinatorial) \emph{valence} of a point $x$ of a pattern $\mathcal{P}$
is defined as the number of discrete components of $\mathcal{P}$ containing $x$. Recall that an \emph{inner point of $\mathcal{P}$} has been
defined as a point of combinatorial valence larger than 1. Otherwise, the point will be called an \emph{endpoint of $\mathcal{P}$}. Let $x$ be a point of
$\mathcal{P}$ of combinatorial valence $\nu$. Obviously, for any model $(T,P,f)$ of $\mathcal{P}$, the (topological) valence of the point of $T$ corresponding
to $x$ is the same and equals $\nu$. In consequence, $x$ is an endpoint (respectively, an inner point) of $\mathcal{P}$ if and only if the point corresponding
to $x$ in any model $(T,P,f)$ is an endpoint (respectively, a point of valence larger than 1) of the tree $T$. So, in what follows we will drop the words
\emph{combinatorial} and \emph{topological} and will use these terms indistinctly in both senses.

The strategy outlined in Section~\ref{S5} relies strongly in using property (\ref{opening0}), that depends on
the notion of \emph{zero entropy pattern}. So, we start this section with the following recursive characterization of zero
entropy patterns, that uses the notions of block structure and skeleton presented in Section~\ref{S3}. It is Proposition~5.6
of \cite{reducibility}.

\begin{proposition}\label{56}
Let $\mathcal{P}$ be an $n$-periodic pattern. Then, $h(\mathcal{P})=0$ if and only if either $\mathcal{P}$
is trivial or has a maximal separated structure of trivial blocks such that the associated skeleton has entropy $0$.
\end{proposition}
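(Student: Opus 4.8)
The plan is to prove the two implications separately: the reverse one is immediate from the results already recalled, and the direct one reduces to a statement about the $\mathcal{P}$-path graph. For the reverse implication, if $\mathcal{P}$ is trivial then $h(\mathcal{P})=0$ as already observed; and if $\mathcal{P}$ has a maximal separated structure of trivial blocks whose skeleton $\mathcal{S}$ satisfies $h(\mathcal{S})=0$, then Proposition~\ref{81} gives $h(\mathcal{P})=h(\mathcal{S})=0$. For the direct implication, assume $h(\mathcal{P})=0$ and $\mathcal{P}$ nontrivial. By Proposition~\ref{twocharact} it suffices to show that $\mathcal{P}$ is $\pi$-reducible for some basic path $\pi$: $\pi$-reducibility then provides the required maximal separated structure of trivial blocks, and Proposition~\ref{81} forces its skeleton to have entropy $h(\mathcal{P})=0$. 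So the whole content of the statement is the assertion that a nontrivial zero-entropy pattern admits a never-splitting basic path, which I would establish by contraposition.

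Work in the $\mathcal{P}$-path graph $G$ and keep Remark~\ref{patent} in mind. The first observation is that every vertex of $G$ has out-degree at least $1$: in any model the map $f$ is injective on the periodic orbit $P$, so the image of a basic path $\{a,b\}$ is a pair of distinct points of $P$, and their connected hull $\langle\{f(a),f(b)\}\rangle$ meets $P$ in at least these two points, hence contains a consecutive pair, i.e.\ a basic path covered by $\{a,b\}$. The second observation is that if a basic path $\pi$ has out-degree exactly $1$, then $\langle f(\pi)\rangle$ meets $P$ in \emph{exactly} two points (three or more points of $P$ on that arc would yield two distinct consecutive pairs, hence two covered basic paths), so $f(\pi)$ is itself a basic path and coincides with the unique basic path covered by $\pi$. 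Consequently, if $G$ has a cycle $v_0\to v_1\to\cdots\to v_{m-1}\to v_0$ all of whose vertices have out-degree $1$, then $f(v_i)=v_{i+1}$ for every $i$ (indices mod $m$), so $f^j(v_0)$ is a basic path for all $j\ge 0$ and $\mathcal{P}$ is $v_0$-reducible. Contrapositively: if $\mathcal{P}$ is not $\pi$-reducible for any basic path $\pi$, then every cycle of $G$ contains a vertex of out-degree $\ge 2$.

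It remains to deduce $\rho(M_{\mathcal{P}})>1$ from this. The key move is to pass to a sink strongly connected component $H$ of $G$ (one with no arrows leaving it, which exists because the condensation of $G$ is a finite directed acyclic graph). Since all out-degrees of $G$ are $\ge 1$ and no arrow leaves $H$, following out-arrows inside $H$ yields a cycle contained in $H$; by the previous paragraph this cycle carries a vertex of out-degree $\ge 2$, and all of that vertex's out-arrows stay inside $H$. Hence the principal submatrix $M_H$ of $M_{\mathcal{P}}$ is irreducible but is not a permutation matrix, so $\rho(M_H)>1$ by standard Perron--Frobenius theory (concretely, the out-degree-$\ge 2$ vertex lies on two distinct loops of $H$, and concatenating them produces exponentially many distinct closed walks, forcing $\operatorname{tr}(M_H^\ell)$ to grow exponentially). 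As $M_H$ is a principal submatrix of the nonnegative matrix $M_{\mathcal{P}}$ we get $\rho(M_{\mathcal{P}})\ge\rho(M_H)>1$, whence $h(\mathcal{P})>0$ by Remark~\ref{patent}, a contradiction. This finishes the direct implication.

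The main obstacle is precisely this last step. The naive shortcut ``every cycle of $G$ has a vertex of out-degree $\ge 2$, therefore $\rho(M_{\mathcal{P}})>1$'' is false in general: the surplus out-arrows may all drain into transient vertices and never return, so no second loop through a common vertex is created. Restricting to a sink strongly connected component is exactly what forces the surplus arrow to stay inside the component and hence to come back, producing the second loop. The remaining ingredients --- the out-degree bookkeeping, the identification of out-degree-$1$ vertices with vertices whose image is again a basic path, and the two concluding appeals to Propositions~\ref{twocharact} and \ref{81} --- are routine.
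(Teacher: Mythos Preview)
The paper does not prove this proposition; it merely quotes it as Proposition~5.6 of \cite{reducibility}. So there is no ``paper's own proof'' to compare against, and your task was really to supply one. Your argument is correct.

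A few remarks on the details. Your identification of out-degree with the number of points of $P$ on the arc $[f(a),f(b)]$ minus one is right, because any two nonadjacent $p_i,p_j$ along that arc are separated by some $p_k\in P$, hence are not consecutive and cannot form a basic path; so the covered basic paths are exactly the $k$ adjacent pairs. This justifies both observations cleanly. The passage to a sink strongly connected component is exactly the right fix for the ``naive shortcut'' you warn about, and the Perron--Frobenius step is fine: $M_H$ is an irreducible $0$--$1$ matrix with all row sums $\ge 1$ and at least one row sum $\ge 2$, so either the row sums are nonconstant and $\rho(M_H)>\min_i r_i(M_H)\ge 1$, or they are constant $\ge 2$ and $\rho(M_H)\ge 2$. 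Finally, the concluding appeals to Propositions~\ref{twocharact} and \ref{81} are legitimate and non-circular in the paper's logical order (both are stated and justified in Section~\ref{S3}, before Proposition~\ref{56}).

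As for the relation to the cited source: the approach in \cite{reducibility} is more structural, going through the machinery of canonical models and Markov partitions developed in \cite{aglmm}; your argument bypasses this by working directly with the combinatorial $\mathcal{P}$-path graph and is in that sense more elementary. What you lose is any finer information about the structure of the zero-entropy pattern beyond the existence of the block structure, but that is all the proposition claims.
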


\begin{figure}
\centering
\includegraphics[scale=0.65]{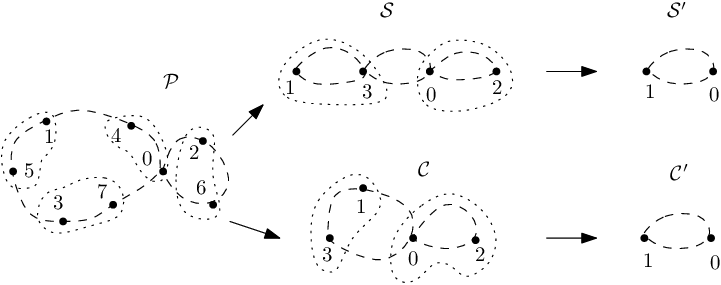}
\caption[fck]{Top: a sequence of skeletons. Bottom: the sequence of combinatorial collapses according to
Definition~\ref{explosions}.\label{comcol}}
\end{figure}

Obviously we can use Proposition~\ref{56} recursively, in the sense that the skeleton $\mathcal{S}$, with entropy
zero and a period that strictly divides that of $\mathcal{P}$, has also a maximal separated structure of trivial blocks
with an associated skeleton $\mathcal{S}'$ of entropy zero. We can thus iterate the process as many times as necessary
to finally obtain a trivial pattern. Consider, for instance, the zero entropy pattern $\mathcal{P}$ of Example~\ref{exesk},
whose skeleton $\mathcal{S}$ was shown in Figure~\ref{topcol}. This skeleton has a maximal separated structure of 2 trivial blocks,
with the associated skeleton $\mathcal{S}'$ being a trivial pattern of 2 points. See the complete sequence of skeletons
in Figure~\ref{comcol} (top). Note that the previous simplification process cannot be carried out without checking the
particular topology of the involved canonical models. Indeed, if we ignore the topology of the tree $T$ in the canonical
model $(T,P,f)$ of $\mathcal{P}$ (that is shown in Figure~\ref{topcol}), for the skeleton it is not possible to decide,
only from the combinatorics of $\mathcal{P}$, between the patterns $\mathcal{S}$ and $\mathcal{C}$ depicted in
Figure~\ref{comcol}. To overcome this dependence from the topology, next we propose a similar but purely combinatorial
simplification mechanism over zero entropy patterns.

\begin{defi}\label{combicolla}
Let $\mathcal{P}=([T,P],[f])$ be a zero entropy $n$-periodic pattern. Let $P_0\cup P_1\cup\ldots\cup P_{p-1}$ be the maximal
and separated structure of trivial blocks given by Proposition~\ref{56}. A $p$-periodic pattern $\mathcal{C}=([S,Q],[g])$ will
be called the \emph{combinatorial collapse of $\mathcal{P}$} if the following properties are satisfied:
\begin{enumerate}
\item[(a)] $g(i)=j$ if and only if $f(P_i)=P_j$
\item[(b)] For any $0\le i<j\le p-1$, there is a discrete component of $\mathcal{P}$ intersecting the blocks $P_i,P_j$ if
and only if there is a discrete component of $\mathcal{C}$ containing the points $i,j$.
\end{enumerate}
We will say that the point $i$ of $\mathcal{C}$ is the \emph{collapse} of the block $P_i$ of $\mathcal{P}$.
Property (a) above implies that the standing convention established in Remark~\ref{standing3} about the labeling of the
points of a skeleton translates verbatim to the labeling of the points of a combinatorial collapse.
\end{defi}

Note that, by definition, the combinatorial collapse is unique, since it is always carried out over the maximal structure of trivial blocks.

As an example, the pattern $\mathcal{C}$ shown in Figure~\ref{comcol} (bottom) is the combinatorial collapse of $\mathcal{P}$.
Note that the skeleton $\mathcal{S}$ does not satisfy property (b) of Definition~\ref{combicolla}: the blocks
$P_0=\{0,4\}$ and $P_1=\{1,5\}$ intersect a single discrete component in $\mathcal{P}$, while the corresponding points
$0,1$ of $\mathcal{S}$ are contained in different discrete components.

Notice that, if $\mathcal{P}$ is a zero entropy pattern, then the combinatorial collapse $\mathcal{C}$ of $\mathcal{P}$
can be obtained from the skeleton $\mathcal{S}$ of $\mathcal{P}$ simply by performing openings. Then, Theorem~\ref{5.3} assures
us that $h(\mathcal{C})=h(\mathcal{S})=0$. Therefore, we get the following translation of Proposition~\ref{56} to the
context of combinatorial collapses.

\begin{figure}
\centering
\includegraphics[scale=0.65]{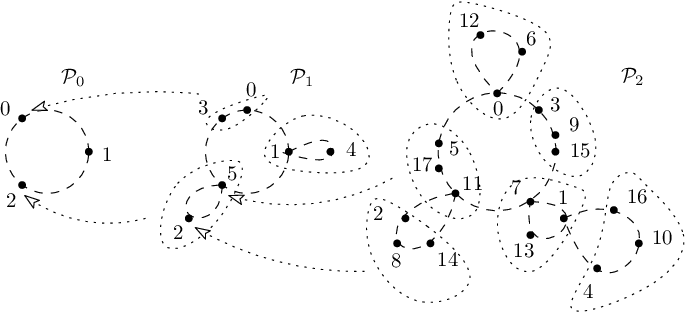}
\caption[fck]{An example of a zero entropy 18-periodic pattern $\mathcal{P}_2$ and the corresponding sequence of collapses.\label{ullh0}}
\end{figure}

\begin{proposition}\label{56-comb}
Let $\mathcal{P}$ be a nontrivial periodic pattern with entropy zero. Then, the combinatorial collapse of $\mathcal{P}$ has entropy zero.
\end{proposition}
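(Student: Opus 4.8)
\textbf{Proof plan for Proposition~\ref{56-comb}.}
The plan is to derive the statement directly from Proposition~\ref{56} together with Theorem~\ref{5.3}, by observing that the combinatorial collapse $\mathcal{C}$ of $\mathcal{P}$ differs from the (topological) skeleton $\mathcal{S}$ of $\mathcal{P}$ only by a sequence of openings. First I would invoke Proposition~\ref{56}: since $\mathcal{P}$ is nontrivial and has entropy zero, it admits a maximal separated structure of trivial blocks $P_0\cup\ldots\cup P_{p-1}$, and the associated skeleton $\mathcal{S}$ satisfies $h(\mathcal{S})=0$. By construction (Definition~\ref{combicolla}), the combinatorial collapse $\mathcal{C}$ is built on the very same block structure, with the same time-labeled action on the $p$ collapsed points (property~(a)); the only possible discrepancy between $\mathcal{C}$ and $\mathcal{S}$ is in the distribution of the points into discrete components.

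The key step is then to verify that $\mathcal{C}$ is obtained from $\mathcal{S}$ by a (possibly empty) sequence of openings. Here I would argue that, for any two blocks $P_i,P_j$ that meet a common discrete component of $\mathcal{P}$, the corresponding points $x_i,x_j$ of the skeleton $\mathcal{S}$ lie in a common discrete component of $\mathcal{S}$: indeed, collapsing the convex hulls $\chull{P_i}_T$ contracts that common discrete component of $\mathcal{P}$ to an interval of $T$ containing $x_i$ and $x_j$ with no intervening collapsed point, so $x_i$ and $x_j$ are consecutive in $\mathcal{S}$. Hence the discrete-component relation of $\mathcal{C}$ (which by property~(b) records exactly the pairs of blocks sharing a discrete component of $\mathcal{P}$) is a refinement of that of $\mathcal{S}$: every discrete component of $\mathcal{C}$ is contained in a discrete component of $\mathcal{S}$. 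Therefore $\mathcal{C}$ is obtained from $\mathcal{S}$ by successively splitting discrete components at inner points, i.e.\ by reversing a sequence of openings; equivalently, $\mathcal{S}$ is obtained from $\mathcal{C}$ by a sequence of openings. Applying Theorem~\ref{5.3} repeatedly along this sequence gives $h(\mathcal{S})\le h(\mathcal{C})$; but openings cannot increase entropy either, so in fact $h(\mathcal{C})=h(\mathcal{S})=0$, which is the claim. (This is precisely the observation already recorded informally in the paragraph preceding the proposition.)

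I expect the only delicate point to be the bookkeeping in the previous paragraph: making fully precise the claim that two blocks meeting a common discrete component of $\mathcal{P}$ collapse to two points lying in a common discrete component of $\mathcal{S}$, and conversely extracting from this a genuine sequence of opening moves connecting $\mathcal{S}$ and $\mathcal{C}$ (one must check that each intermediate pattern is again a legitimate pattern and that consecutive steps are openings in the sense of Section~\ref{S4}, with the labelings matching by the standing convention of Remark~\ref{standing4}). Once that identification is in place, the entropy statement is immediate from Theorem~\ref{5.3}. Since the argument is short, an alternative is to bypass the skeleton altogether and prove $h(\mathcal{C})=0$ directly from Proposition~\ref{56} by induction on the period, noting that $\mathcal{C}$ inherits from $\mathcal{S}$ a maximal separated structure of trivial blocks whose own skeleton is unchanged; I would mention this as a remark but carry out the opening-based argument as the main proof, as it is the most economical.
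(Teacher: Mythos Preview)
Your overall strategy is exactly the paper's: invoke Proposition~\ref{56} to get $h(\mathcal{S})=0$, relate $\mathcal{C}$ and $\mathcal{S}$ by openings, and apply Theorem~\ref{5.3}. However, you have the direction of the opening relationship reversed, and this breaks the argument.

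You claim that if two blocks $P_i,P_j$ meet a common discrete component of $\mathcal{P}$, then the collapsed points $x_i,x_j$ lie in a common discrete component of the skeleton $\mathcal{S}$. This is false. The paper's own example (the $8$-periodic pattern of Example~\ref{exesk}, discussed around Figure~\ref{comcol}) is a counterexample: the blocks $P_0=\{0,4\}$ and $P_1=\{1,5\}$ both meet the discrete component $\{0,1,3,4,5,7\}$ of $\mathcal{P}$, yet the points $0$ and $1$ of the skeleton $\mathcal{S}$ lie in \emph{different} discrete components. The flaw in your justification is the phrase ``with no intervening collapsed point'': the convex hull of a third trivial block $P_k$ may meet $\chull{D}_T$ at a vertex of $T$ that is not a point of $P$, so after collapsing, $x_k$ can sit between $x_i$ and $x_j$ in the skeleton tree even though $P_k$ contributes no $P$-point to the open segment between your chosen $a\in P_i$ and $b\in P_j$.

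With the direction reversed, your chain of inequalities gives only $h(\mathcal{S})\le h(\mathcal{C})$, which is vacuous since $h(\mathcal{S})=0$; the sentence ``openings cannot increase entropy either, so in fact $h(\mathcal{C})=h(\mathcal{S})=0$'' does not follow, because you have established openings in only one direction. The correct (and opposite) implication---if $x_i,x_j$ are consecutive in $\mathcal{S}$ then $P_i,P_j$ meet a common discrete component of $\mathcal{P}$---shows that every discrete component of $\mathcal{S}$ sits inside one of $\mathcal{C}$, so $\mathcal{C}$ is obtained from $\mathcal{S}$ by openings, whence $h(\mathcal{C})\le h(\mathcal{S})=0$. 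That is precisely what the paragraph preceding the proposition records; fixing the direction makes your plan coincide with the paper's.
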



\begin{defi}\label{explosions}
As an immediate consequence of Proposition~\ref{56-comb}, a zero entropy $n$-periodic pattern $\mathcal{P}$ has associated
a sequence of patterns $\{\mathcal{P}_i\}_{i=0}^r$ and a sequence of integers $\{p_i\}_{i=0}^r$ for some $r\ge0$ such that:
\begin{enumerate}
\item $\mathcal{P}_r=\mathcal{P}$
\item $\mathcal{P}_0$ is a trivial $p_0$-periodic pattern
\item For $1\le i\le r$, $\mathcal{P}_{i}$ has a maximal separated structure of $\prod_{j=0}^{i-1} p_{j}$ trivial blocks of cardinality $p_i$
and $\mathcal{P}_{i-1}$ is the corresponding combinatorial collapse.
\end{enumerate}
The sequence $\{\mathcal{P}_i\}_{i=0}^r$ will be called \emph{the sequence of collapses of} $\mathcal{P}$. Notice that $\prod_{j=0}^{r}p_j=n$.
See Figure~\ref{ullh0} for an example with $p_0=3$, $p_1=2$, $p_2=3$.
\end{defi}

\begin{remark}\label{persist}
Let $\mathcal{P}$ be a zero entropy $n$-periodic pattern and let $\{\mathcal{P}_i\}_{i=0}^r$ be the corresponding sequence of collapses.
Consider any particular time labeling $\{0,1,\ldots,n-1\}$ of the points of $\mathcal{P}$. By
Remark~\ref{standing3}, this choice fixes the time labels of all points in all patterns of the sequence of collapses. Note also that, for any
$0\le i<r$, the integers labeling the points of $\mathcal{P}_i$ persist as labels of points in $\mathcal{P}_{i+1}$. In particular,
if $p_0$ is the period of the trivial pattern $\mathcal{P}_0$, then $\{0,1,\ldots,p_0-1\}$ are the only integers in the rank $\{0,1,\ldots,n-1\}$
that persist as labels of points in any pattern of the sequence of collapses. See Figure~\ref{ullh0} for an example with $p_0=3$.
\end{remark}

\section{Branching sequences}\label{S6b}
In this Section we dive deeper into the very particular combinatorial structure of zero entropy patterns. The obtained results
will be used in Section~\ref{S7} to prove Theorem~\ref{2inners_3op}.

Let $\mathcal{P}$ be an $n$-periodic pattern and let $x$ be a point of $\mathcal{P}$ of valence $\nu\ge1$. Consider any model $(T,P,f)$ of $\mathcal{P}$. Then, $T\setminus\{x\}$ has $\nu$ connected components $K_1,K_2,\ldots,K_\nu$. We want to register how the forward iterates of the point $x$ are distributed among the connected components of $T\setminus\{x\}$. To this end, consider the integer time labeling of the points of $\mathcal{P}$ such that $x=0$. Now, $\{P\cap K_i\}_{i=1}^\nu$ can be viewed as a partition of $\{1,2,\ldots,n-1\}$. The set $(P\cap K_i)\cup\{0\}$ of points of $\mathcal{P}$ will be called an \emph{$x$-branch}. Note that this notion is independent of the chosen model $(T,P,f)$ representing $\mathcal{P}$. As an example, consider the 7-periodic pattern $\mathcal{P}$ shown in Figure~\ref{brutty2}. Let $x$ be the point of valence 3 labeled as 5 in that
figure. Shift all labels by $-5$ (mod 7). The discrete components of $\mathcal{P}$ read now as $\{0,3,5\}$, $\{3,6\}$, $\{0,2\}$ and $\{0,1,4\}$. The $x$-branches of $\mathcal{P}$ are then $\{0,3,5,6\}$, $\{0,2\}$ and $\{0,1,4\}$.

\begin{remark}\label{isi}
Let $\mathcal{P}$ be a periodic pattern and let $x$ be any point of $\mathcal{P}$. Observe that any discrete component of $\mathcal{P}$ is contained in a single $x$-branch. As a direct consequence of this fact, if in addition $\mathcal{P}$ has entropy zero then any block of the maximal structure of trivial blocks is contained in a single $x$-branch.
\end{remark}

To understand the following result, it is crucial to keep in mind Remarks~\ref{standing3} and \ref{persist} concerning the labeling conventions of points and blocks in zero entropy patterns. In particular, the labels of all points in the combinatorial collapse of a pattern $\mathcal{P}$ persist as labels of points in $\mathcal{P}$.

\begin{lemma}\label{isi2}
Let $\mathcal{P}$ be a zero entropy periodic pattern with a maximal separated structure $P_0\cup P_1\cup\ldots\cup P_{p-1}$ of trivial blocks. Let $\mathcal{C}$ be the combinatorial collapse of $\mathcal{P}$. If $0\le i,j,k<p$ are three points of $\mathcal{C}$ such that $\{j,k\}$ is contained in a single $i$-branch of $\mathcal{C}$, then $P_j\cup P_k$ is contained in a single $i$-branch of $\mathcal{P}$.
\end{lemma}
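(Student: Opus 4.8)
The plan is to work with the canonical model and reduce the statement about branches in $\mathcal{P}$ to the analogous statement in the skeleton $\mathcal{S}$, then transfer from the skeleton to the combinatorial collapse $\mathcal{C}$ using the fact that $\mathcal{C}$ is obtained from $\mathcal{S}$ by openings, which never merge separate branches back together. First I would fix the canonical model $(T,P,f)$ of $\mathcal{P}$ and the associated separated block structure $P_0\cup\cdots\cup P_{p-1}$, and let $\kappa\colon T\to S$ be the projection collapsing each $\chull{P_\ell}_T$ to the point $x_\ell$ of the skeleton $\mathcal{S}$. By Remark~\ref{standing3} the point labeled $i$ of $\mathcal{S}$ (and of $\mathcal{C}$) corresponds to the block $P_i$. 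Consider the point $x=x_i\in S$: its valence in $S$ equals the valence of $\chull{P_i}_T$ "seen from outside", and $S\setminus\{x_i\}$ has components that are exactly the $\kappa$-images of the pieces of $T$ hanging off $\chull{P_i}_T$. The key geometric observation is that, because the block structure is separated, collapsing $\chull{P_i}_T$ to a point does not change which of these external pieces a given block $\chull{P_j}_T$ (with $j\ne i$) lies in: each such block lies entirely in one connected component of $T\setminus\chull{P_i}_T$, and hence its collapse $x_j$ lies in one $x_i$-branch of $\mathcal{S}$, the one corresponding to that component.

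Next I would prove the skeleton version of the statement: if $\{j,k\}$ lies in a single $x_i$-branch of $\mathcal{S}$, then $\chull{P_j}_T\cup\chull{P_k}_T$ lies in a single connected component of $T\setminus\chull{P_i}_T$, hence $P_j\cup P_k$ lies in a single $x$-branch of $\mathcal{P}$ where $x$ is any point of $P_i$ — but here one must be careful, since an $x$-branch of $\mathcal{P}$ is defined relative to a \emph{point} of $P$, not relative to the subtree $\chull{P_i}_T$. This is where the statement's hypothesis that the blocks are \emph{trivial} and the structure is \emph{separated} does the work: a trivial block $P_i$ sits inside a single discrete component, so by Remark~\ref{isi} it lies in a single $x$-branch for the relevant $x$; combined with separability, removing any single point $x\in P_i$ from $T$ leaves all of $\chull{P_j}_T$ and $\chull{P_k}_T$ (for $j,k\ne i$) in the \emph{same} component as each other whenever their collapses were in the same $x_i$-branch. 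So I would phrase the argument as: pick the point $x\in P_i$ that $x_i$ corresponds to (namely $x$ is the label-$i$ point, which by Remark~\ref{persist} persists as a point of $P_i$ in $\mathcal{P}$); then the $x$-branches of $\mathcal{P}$ refine, along blocks, exactly the $x_i$-branches of $\mathcal{S}$.

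Finally I would invoke the relation between $\mathcal{S}$ and $\mathcal{C}$. By the discussion preceding Proposition~\ref{56-comb}, $\mathcal{C}$ is obtained from $\mathcal{S}$ by a sequence of openings. An opening joins two adjacent discrete components at an inner point and changes nothing else; in particular, since no new inner points are created and no point's set of branches is split, the $i$-branches of $\mathcal{C}$ are unions of $i$-branches of $\mathcal{S}$ (openings can only \emph{merge} branches, never separate them — and in fact, tracking carefully, an opening at a point other than $i$ does not affect the $i$-branch partition at all, while an opening at $i$ merges two of its branches). Hence if $\{j,k\}$ is in a single $i$-branch of $\mathcal{C}$, it was already in a single $x_i$-branch of $\mathcal{S}$, and the previous paragraph gives that $P_j\cup P_k$ is in a single $x$-branch of $\mathcal{P}$, which is what we want. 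The main obstacle I anticipate is the bookkeeping in the middle step: making precise the claim that the $x$-branch partition of $\mathcal{P}$ (defined via a point $x\in P_i$) restricts, on the complement of $P_i$, to a refinement of the $x_i$-branch partition of $\mathcal{S}$ — this requires using both separability (so that $\chull{P_\ell}_T$ for $\ell\ne i$ does not meet $\chull{P_i}_T$, hence lies in one component of $T\setminus\{x\}$) and triviality (so that the skeleton projection $\kappa$ genuinely collapses a subtree contained in one discrete component, matching the combinatorial valence of $x_i$ in $\mathcal{S}$ with the number of branches actually visible). Once that correspondence is set up cleanly, everything else is a direct transfer along $\kappa$ and along the opening maps.
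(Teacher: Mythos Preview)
Your argument has a genuine gap in the final step, where you pass from the combinatorial collapse $\mathcal{C}$ back to the skeleton $\mathcal{S}$. You correctly observe that $\mathcal{C}$ is obtained from $\mathcal{S}$ by openings and that an opening at $i$ merges two $i$-branches while an opening elsewhere leaves the $i$-branches unchanged. But this means the $i$-branch partition of $\mathcal{C}$ is \emph{coarser} than (or equal to) that of $\mathcal{S}$, so the implication runs the wrong way: from ``same $i$-branch in $\mathcal{S}$'' to ``same $i$-branch in $\mathcal{C}$'', not the reverse. In the paper's own Example~\ref{exesk} (see the discussion after Definition~\ref{combicolla}), going from $\mathcal{S}$ to $\mathcal{C}$ requires an opening at the point $3$; thus $0$ and $1$ lie in the same $3$-branch of $\mathcal{C}$ but in different $3$-branches of $\mathcal{S}$, so your chain ``same branch in $\mathcal{C}$ $\Rightarrow$ same branch in $\mathcal{S}$ $\Rightarrow$ same branch in $\mathcal{P}$'' breaks at the first arrow. (The lemma still holds in that instance because $3$ is an endpoint of $\mathcal{P}$, but your route does not see this.)

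The underlying issue is that the skeleton $\mathcal{S}$ records components of $T\setminus\chull{P_i}_T$, which can be strictly finer than the components of $T\setminus\{i\}$ that define the $i$-branches of $\mathcal{P}$; the combinatorial collapse $\mathcal{C}$ is designed precisely to undo this over-refinement, so it cannot be controlled from $\mathcal{S}$ in the direction you need. The paper avoids the skeleton entirely and argues the contrapositive directly from Definition~\ref{combicolla}: if $P_j$ and $P_k$ lie in different $i$-branches of $\mathcal{P}$, one writes down the chain of inner points along the path from $j$ to $k$ in $\mathcal{P}$ (which must pass through $i$), collapses each to its block label, and uses property~(b) of the combinatorial collapse together with triviality of the blocks to produce a chain in $\mathcal{C}$ from $j$ to $k$ through $i$, forcing $j$ and $k$ into different $i$-branches of $\mathcal{C}$.
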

\begin{proof}
Assume by way of contradiction that $P_j$ and $P_k$ are respectively contained in two different $i$-branches of $\mathcal{P}$. Then, for some $N\ge 2$ there exist $N+1$ different points of $\mathcal{P}$, $x_0,x_1,\ldots,x_N$, such that:
\begin{enumerate}
\item $x_0=j$ and $x_N=k$
\item $x_n$ is inner for all $0<n<N$
\item $\{x_n,x_{n+1}\}$ is contained in a discrete component of $\mathcal{P}$ for $0\le n<N$
\item $x_m=i$ for some $1\le m<N$
\end{enumerate}
Intuitively, the above ordered sequence of points accounts for all points of $\mathcal{P}$ successively met in the shortest path going from $j$ to $k$. The assumption that $i$ separates $j$ from $k$ is imposed by property (d).

Consider now, for any point $x_n$ of the above sequence, the collapse of the trivial block of $\mathcal{P}$ containing $x_n$. It is a point of $\mathcal{C}$ that we denote by $y_n$. Note that $y_0=j$, $y_m=i$ and $y_N=k$. Observe also that, for a pair of consecutive points $x_n,x_{n+1}$, it may happen that $\{x_n,x_{n+1}\}$ is contained in a block. In this case, since the blocks are trivial, $\{x_n,x_{n+1},x_{n+2}\}$ is not contained in a block. Therefore, $y_n=y_{n+1}\ne y_{n+2}$. On the other hand, if $\{x_n,x_{n+1}\}$ is not contained in a block, by the definition of the combinatorial collapse, $\{y_n,y_{n+1}\}$ is a binary set contained in a discrete component of $\mathcal{C}$. This observations lead to the existence of a sequence $z_0,z_1,\ldots,z_{M}$ of $M+1\le N+1$ points of $\mathcal{C}$ such that
\begin{enumerate}[(a')]
\item $z_0=j$ and $z_M=k$
\item $z_n$ is inner for all $0<n<M$
\item $\{z_n,z_{n+1}\}$ is contained in a discrete component of $\mathcal{C}$ for $0\le n<M$
\item $x_{m'}=i$ for some $1\le m'<M$
\end{enumerate}
By property (d'), $j$ and $k$ belong to different $i$-branches in $\mathcal{C}$, in contradiction with the hypothesis of the lemma.
\end{proof}

Let $\mathcal{P}$ be a zero entropy periodic pattern and let $\mathcal{C}$ be its combinatorial collapse. Let us call $\{P_i\}$ and $\{Q_i\}$ the blocks of the respective maximal structures of trivial blocks. Let $x$ be a point of $\mathcal{P}$ and let $P_i$ be the block of $\mathcal{P}$ containing $x$. Let us call $y$ the point of $\mathcal{C}$ corresponding to the collapse of $P_i$ and let $Q_j$ be the block of $\mathcal{C}$ containing $y$. By Remark~\ref{isi}, there exists a unique $x$-branch $Z$ containing $P_i$. On the other hand, Remark~\ref{isi} yields also that $Q_j$ is contained in a single $y$-branch of $\mathcal{C}$. Recall now that the labels of the points in $\mathcal{C}$ persist as labels of points in $\mathcal{P}$. So, we can view $Q_j$ also as a subset of points of $\mathcal{P}$. Then, by Lemma~\ref{isi2}, there exists a unique $x$-branch $Z'$ containing $Q_j$. The point $x$ will be called \emph{bidirectional} if $Z\ne Z'$.

\begin{lemma}\label{critic}
Any periodic pattern with entropy zero has bidirectional inner points.
\end{lemma}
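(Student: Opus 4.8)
The plan is to argue by induction on the period. The statement is vacuous when $\mathcal{P}$ is trivial (it then has no inner points at all), so assume $\mathcal{P}$ nontrivial, and let $\mathcal{C}$ be its combinatorial collapse (Definition~\ref{combicolla}); by Proposition~\ref{56-comb}, $\mathcal{C}$ has entropy zero, and its period is a strict divisor of that of $\mathcal{P}$. If $\mathcal{C}$ is nontrivial we will produce a bidirectional inner point of $\mathcal{P}$ by lifting one of $\mathcal{C}$, obtained from the induction hypothesis; if $\mathcal{C}$ is trivial (the case $r=1$ in the sequence of collapses of $\mathcal{P}$, cf.\ Definition~\ref{explosions}), we argue directly.

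\emph{Base case.} Since $\mathcal{P}$ is nontrivial it has an inner point, and since exhibiting one bidirectional inner point suffices we are free to choose the time labeling so that this point is $0$ and its block is $P_0$. Because $\mathcal{C}$ is trivial, its single discrete component is all of $\mathcal{C}$, so the block $Q_j$ of $\mathcal{C}$ containing the collapse of $P_0$ consists of every point of $\mathcal{C}$; through the persistence of labels (Remark~\ref{persist}) these correspond to the points of $\mathcal{P}$ labeled $0,1,\dots,p-1$, one per block. Now $P_0$ is a trivial block, hence contained in a single discrete component, and---crucially, because $\mathcal{C}$ is trivial---the subtree $\langle P_0\rangle$ of the canonical model is attached to the rest of the tree at the single point $0$; therefore, at $0$, the branch $Z$ containing $P_0$ is disjoint from every other block, whereas by Lemma~\ref{isi2} the labeled points $1,\dots,p-1$ (which belong to those other blocks) all lie in a single branch $Z'\ne Z$. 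Hence $0$ is bidirectional.

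\emph{Inductive step.} Let $\mathcal{C}$ be nontrivial and let $\bar x$ be a bidirectional inner point of $\mathcal{C}$ given by the induction hypothesis, with label $i_0<p$. Property~(b) of Definition~\ref{combicolla} turns the $\nu\ge2$ discrete components of $\mathcal{C}$ through $\bar x$ into $\nu$ discrete components of $\mathcal{P}$ meeting $P_{i_0}$; since two discrete components of $\mathcal{P}$ share at most one point and $P_{i_0}$ lies in a single discrete component, each of them meets $P_{i_0}$ in exactly one point, an inner point of $\mathcal{P}$ lying over $\bar x$. One then selects the appropriate such point $x$ (guided by the conventions of Remarks~\ref{standing3} and \ref{persist}) and checks that it is bidirectional by pushing the two $\bar x$-branches of $\mathcal{C}$ that witness the bidirectionality of $\bar x$ up to $\mathcal{P}$ by means of Lemma~\ref{isi2}, using that inside the trivial block $P_{i_0}$ the branch structure at $x$ refines the one at the collapse point, so that the two lifted branches stay distinct.

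The main obstacle is the inductive step. The base case is essentially forced once one observes, from $\mathcal{C}$ being trivial, that every block hangs off the rest of the tree at a single point; when $\mathcal{C}$ is nontrivial, however, a block may be attached at several points, so pinning down \emph{which} inner point of $\mathcal{P}$ over $\bar x$ to take, and verifying via Lemma~\ref{isi2} that the transported pair of branches is genuinely a pair of \emph{distinct} branches, is delicate and may require a mild strengthening of the inductive statement. Throughout, one must carefully keep apart the three layers of labeling---\emph{point $i$ of $\mathcal{C}$}, \emph{point $i$ of $\mathcal{P}$}, and \emph{block $P_i$}---as governed by Remarks~\ref{standing3} and \ref{persist}; the remaining steps are routine uses of the structural facts on trivial blocks and branching sequences collected in Sections~\ref{S3} and \ref{S6b}.
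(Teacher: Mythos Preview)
Your inductive scheme has a genuine gap, and the difficulty lies exactly where you flag it: the inductive step. The notion of \emph{bidirectional} is a statement about two consecutive levels of the sequence of collapses. For a point $x$ of $\mathcal{P}$, bidirectionality compares the $x$-branch containing the block $P_i$ of $\mathcal{P}$ (level $r$) with the $x$-branch containing the block $Q_j$ of $\mathcal{C}$ (level $r-1$); cf.\ Remark~\ref{bidi}, which says $x$ is bidirectional iff $\delta_{r-1}\ne\delta_r$. By contrast, bidirectionality of $\bar x$ in $\mathcal{C}$ is the condition $\delta_{r-2}\ne\delta_{r-1}$ (computed in the branching sequence of $\mathcal{C}$). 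These concern \emph{different} pairs of levels, so knowing $\bar x$ is bidirectional in $\mathcal{C}$ gives you no direct information about whether any lift of $\bar x$ is bidirectional in $\mathcal{P}$. Your remark that the step ``may require a mild strengthening of the inductive statement'' is honest, but without such a strengthening the argument simply does not go through; and it is not clear what strengthening would work, since the $\delta$-indices at different levels are essentially independent.

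Your base case is also shaky: the assertion that ``$\langle P_0\rangle$ is attached to the rest of the tree at the single point $0$'' does not follow from $\mathcal{C}$ being trivial. The block $P_0$ is contained in a single discrete component $C$, but $C$ may contain further points (from other blocks), and $\langle P_0\rangle$ may be attached to the rest of $\langle C\rangle$ at several vertices; nothing forces all such attachment points to coincide with the chosen inner point. What you really need is that if the branch $Z$ containing $P_0$ equals the branch $Z'$ containing $Q_0$, then $P_0$ together with the points $1,\dots,p-1$ all sit inside the single discrete component $C$, which would yield a strictly larger trivial block structure and contradict maximality. That is precisely the mechanism the paper uses.

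The paper's proof is not inductive at all. It starts from an arbitrary inner point $x$, assumes $x$ is not bidirectional, and then shows by a short case analysis that either some other inner point $x'$ (lying in the same block $P_0$ or in some $P_{kq}$) is bidirectional, or else the sets $\tilde P_i=\bigcup_k P_{i+kq}$ form a trivial block structure with strictly larger blocks than the $P_i$, contradicting maximality. So the engine is the \emph{maximality} of the structure of trivial blocks, not a descent through the sequence of collapses.
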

\begin{proof}
Let $\mathcal{P}=([T,P],[f])$ be a zero entropy pattern and let $\mathcal{C}$ be the combinatorial collapse of $\mathcal{P}$. Let
$P_0\cup P_1\cup\ldots\cup P_{p-1}$ and $Q_0\cup Q_1\cup\ldots Q_{q-1}$ be the maximal separated block structures of $\mathcal{P}$ and $\mathcal{C}$ respectively.

Let $x$ be any inner point of $\mathcal{P}$. Assume that $x$ is not bidirectional. In order to do not overload the notation, assume without loss of generality that $x=0$. By the standing labeling conventions, $0\in P_0$ and the collapse of $P_0$ is the point of $\mathcal{C}$ labeled as 0, that belongs to the block $Q_0=\{0,q,2q,\ldots,(p/q-1)q\}$. Since $P_0$ is a trivial block, $P_0\subset C$ for a discrete component $C$ of $\mathcal{P}$. Set
\[ X:=\bigcup_{1\le k<p/q} P_{kq}. \]
The set $X$ is the expansion of all points in $Q_0\setminus\{0\}$ to the corresponding blocks in $\mathcal{P}$. Since we are assuming that 0 is not bidirectional, Remark~\ref{isi} and Lemma~\ref{isi2} imply that
\begin{equation}\label{etiq}
P_0\cup X\mbox{ is contained in a single 0-branch $Z$ of }\mathcal{P}.
\end{equation}

We start by distinguishing two cases.

\begin{case}{1} $X\cap C=\emptyset$. \end{case}
We claim that in this case $C=P_0$. Indeed, $Q_0$ is contained in a discrete component of $\mathcal{C}$. By definition of the combinatorial collapse, all blocks $P_{iq}$ for $0\le i<p/q$ must intersect a single discrete component $D$ of $\mathcal{P}$. Since $X\cap C=\emptyset$, by (\ref{etiq}) this is only possible if $C=P_0$ (as claimed), $D$ is contained in the 0-brach $Z$ and $D$ is adjacent to $C$. See Figure~\ref{figcritic} (center). Let $x'$ be the only point in $C\cap D=P_0\cap D$, whose collapse is the point 0 in $\mathcal{C}$. Then, the $x'$-branch containing $P_0$ and the $x'$-branch containing $Q_0$ are different. Therefore, $x'$ is bidirectional and we are done.

\begin{case}{2} $X\cap C\ne\emptyset\mbox{ and }X\not\subset C$. \end{case}
In this case, all blocks $P_{iq}$ intersect $C$ and at least one block, say $P_{jq}$, has an inner point $x'$ in common with $C$, whose collapse is the point $jq$ in $\mathcal{C}$. See Figure~\ref{figcritic} (right). Then, the $x'$-branch containing $P_{jq}$ and the $x'$-branch containing $Q_0$ are different. Therefore, $x'$ is bidirectional and we are done.

\begin{figure}
\centering
\includegraphics[scale=0.65]{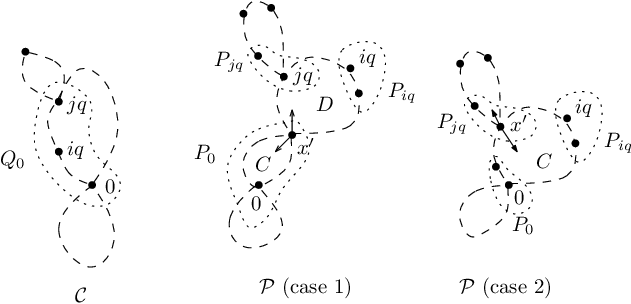}
\caption[fck]{The two cases in the proof of Lemma~\ref{critic}. The arrows mark the two different $x'$-branches implying that $x'$ is bidirectional. \label{figcritic}}
\end{figure}

Note that if $\mathcal{P}$ has no bidirectional inner points, then from above we are not in the hypotheses of cases 1 and 2 and, in consequence, $X\subset C$. Since $P_0\subset C$, we get that
\[ \tilde{P}_0:=P_0\cup X=\bigcup_{0\le k<p/q} P_{kq}\subset D. \]

Set $\tilde{P}_i:=\bigcup_{k=0}^{(p/q)-1} P_{i+kq}$ for $0\le i<q$. From above, if $\mathcal{P}$ has no bidirectional inner points then $\tilde{P}_i$ is contained in a single discrete component of $\mathcal{P}$. Moreover, since $f(\tilde{P}_i)=\tilde{P}_{i+1}$, it follows that $\tilde{P}_0\cup \tilde{P}_1\cup\ldots\cup\tilde{P}_{q-1}$ is a trivial block structure for $\mathcal{P}$, in contradiction with the maximality of the structure $P_0\cup P_1\cup\ldots\cup P_{p-1}$.
\end{proof}

Let $x$ be a point of an $n$-periodic pattern $\mathcal{P}$ and let $\nu\ge1$ be the valence of $x$. It is convenient to fix an indexing of the set of $x$-branches. Next we define a natural indexing method that will be used by default from now on. Recall that, arithmetically, an $x$-branch is nothing but a subset of $\{0,1,\ldots,n-1\}$. Moreover, each $x$-branch contains 0 by definition and the intersection of two different $x$-branches is $\{0\}$. We will index the set of $x$-branches according to the minimum (positive) time distance from $x$ to a point in the branch. More precisely, for any $x$-branch $Z$, let $d_Z$ be the minimum positive integer in $Z$. From now on, we will assume that the set $\{Z_i\}_{i=1}^\nu$ of $x$-branches is indexed in such a way that $d_{Z_i}<d_{Z_j}$ if and only if $i<j$. As an example, consider the 7-periodic pattern $\mathcal{P}$ shown in Figure~\ref{brutty2}. Let $x$ be the point of valence 3 labeled as 5 in that figure. The $x$-branches of $\mathcal{P}$ are then $X=\{0,3,5,6\}$, $Y=\{0,2\}$ and $W=\{0,1,4\}$, with $d_X=3$, $d_Y=2$ and $d_W=1$. So, for this example we would denote the set of $x$-branches as $\{Z_1,Z_2,Z_3\}$, with $Z_1=W$, $Z_2=Y$ and $Z_3=X$.

Let $\mathcal{P}$ be an $n$-periodic pattern and let $x$ be an inner point of $\mathcal{P}$, of valence $\nu>1$. There exists a unique $n$-periodic $\nu$-flower (a pattern with a unique inner point $y$ and $\nu$ discrete components) whose set of $y$-branches, that coincides with its set of discrete components (petals) when $y$ is labeled as 0, coincides with the set of $x$-branches of $\mathcal{P}$. Such a pattern will be denoted by $\mathcal{F}_{x}(\mathcal{P})$. Note that $\mathcal{F}_{x}(\mathcal{P})$ is in some sense the simplest pattern having the set of $x$-branches of $\mathcal{P}$, and is obtained from $\mathcal{P}$ by performing iteratively all possible openings that do not consist of joining two discrete components adjacent at $x$. For an example, consider the 7-periodic pattern $\mathcal{P}$ shown in Figure~\ref{brutty2}. Let $x$ be the point of valence 3 labeled as 5 in that figure. In this case, $\mathcal{F}_{x}(\mathcal{P})$ is the 3-flower whose petals are $\{5,1,3,4\}$, $\{5,0\}$ and $\{5,2,6\}$. After shifting the labels by $-5$ (mod 7) in order that the central point of the flower reads as 0, the petals are written as $\{0,3,5,6\}$, $\{0,2\}$ and $\{0,4,1\}$, that are precisely the $x$-branches of $\mathcal{P}$.

\begin{remark}\label{trivialet}
Let $\mathcal{P},\mathcal{Q}$ be $n$-periodic patterns. For any $x,y\in\{0,1,\ldots,n-1\}$, the set of $x$-branches of $\mathcal{P}$ and the set of $y$-branches of $\mathcal{Q}$ coincide if and only if $\mathcal{F}_{x}(\mathcal{P})=\mathcal{F}_{y}(\mathcal{Q})$.
\end{remark}

The previous remark says that in fact the notation $\mathcal{F}_{x}(\mathcal{P})$, that denotes a pattern, could have been reserved to denote simply the (arithmetic) set of $x$-branches of $\mathcal{P}$. We have used the construction of the flower just as a trick that hopefully supports the geometric visualization.

The following result is true for any point of a periodic pattern but, in pursuit of simplicity, is stated without loss of generality for a point labeled as 0.

\begin{lemma}\label{isi3}
Let $\mathcal{P}$ be a zero entropy periodic pattern and let $\{\mathcal{P}_i\}_{i=0}^r$ be the associated sequence of collapses. For any $0\le i\le r$, let $P_0^i$ be the block of the maximal structure of $\mathcal{P}_i$ containing the point $0\in\mathcal{P}_i$. Then, $P_0^i$ is contained in a single 0-branch of $\mathcal{P}$.
\end{lemma}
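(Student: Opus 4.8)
Looking at this problem, I need to prove Lemma~\ref{isi3}: for a zero entropy periodic pattern $\mathcal{P}$ with sequence of collapses $\{\mathcal{P}_i\}_{i=0}^r$, the block $P_0^i$ (containing the point $0$ in $\mathcal{P}_i$) is contained in a single $0$-branch of $\mathcal{P}$.

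The plan is to strip off the point $0$ itself — which lies in every $0$-branch of every pattern containing it — and to track the remaining set $S:=P_0^i\setminus\{0\}$ along the tower of collapses $\mathcal{P}_i,\mathcal{P}_{i+1},\dots,\mathcal{P}_r=\mathcal{P}$. Concretely, I would prove by induction on $m$, for $i\le m\le r$, the statement $(\dagger_m)$ that $S$ is contained in a single $0$-branch of $\mathcal{P}_m$; the case $m=r$ then yields at once that $P_0^i=S\cup\{0\}$ lies in a single $0$-branch of $\mathcal{P}$, as wanted. Several facts are available throughout: every $\mathcal{P}_m$ has entropy zero (Definition~\ref{explosions} together with Proposition~\ref{56-comb}), so Remark~\ref{isi} and Lemma~\ref{isi2} may be invoked at each level; and by Remark~\ref{persist} the labels of the points of $\mathcal{P}_i$ persist as labels of points of every later $\mathcal{P}_m$, so $S$ is unambiguously a set of points of each of these patterns. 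If $|S|\le 1$ then $(\dagger_m)$ is trivially true for all $m$ (a single point always lies in some discrete component, hence in some $0$-branch), so I may assume $|S|\ge 2$.

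For the base case $m=i$: when $i\ge 1$, $P_0^i$ is a trivial block of $\mathcal{P}_i$, hence contained in a single discrete component of $\mathcal{P}_i$; when $i=0$, $P_0^0$ is contained in the unique discrete component of the trivial pattern $\mathcal{P}_0$. In either case Remark~\ref{isi}, applied with the point $0$, shows that this component — and hence $S$ — lies in a single $0$-branch of $\mathcal{P}_i$. For the inductive step, assume $(\dagger_m)$ with $i\le m<r$, say $S$ lies in the $0$-branch $Z$ of $\mathcal{P}_m$. Write $\{P_t\}_t$ for the maximal separated structure of trivial blocks of $\mathcal{P}_{m+1}$, indexed so that $P_t$ collapses to the point $t$ of its combinatorial collapse $\mathcal{P}_m$; by the standing conventions (Remarks~\ref{standing3} and~\ref{persist}) the point labeled $t$ of $\mathcal{P}_{m+1}$ belongs to $P_t$. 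Given two distinct $b,b'\in S$, the points $0,b,b'$ are three distinct points of $\mathcal{P}_m$ (here $b,b'\ne 0$) and $\{b,b'\}\subseteq Z$ sits in a single $0$-branch of $\mathcal{P}_m$; Lemma~\ref{isi2} then produces a single $0$-branch of $\mathcal{P}_{m+1}$ containing $P_b\cup P_{b'}$. On the other hand each $P_t$, being a trivial block of $\mathcal{P}_{m+1}$ with $p_{m+1}\ge 2$ points, lies in a \emph{unique} $0$-branch of $\mathcal{P}_{m+1}$ by Remark~\ref{isi}. Fixing some $b_0\in S$, these two facts force $P_b$ and $P_{b_0}$ to lie in one and the same $0$-branch $Z'$ of $\mathcal{P}_{m+1}$ for every $b\in S$; since the point labeled $b$ belongs to $P_b\subseteq Z'$, this gives $S\subseteq Z'$, which is $(\dagger_{m+1})$, and closes the induction.

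The substance of the argument is Lemma~\ref{isi2}, which is exactly the tool that transfers \emph{being contained in one $0$-branch} from a combinatorial collapse up to the pattern above it. The only place that demands real care is the bookkeeping of labels: one must check that $S$, a set of labels born at level $i$, is read correctly inside each $\mathcal{P}_m$ with $m\ge i$, and that the point labeled $t$ of $\mathcal{P}_{m+1}$ genuinely lies in the block $P_t$ collapsing to $t\in\mathcal{P}_m$ — both of which are precisely what Remarks~\ref{standing3} and~\ref{persist} guarantee, and both of which are what make the invocation of Lemma~\ref{isi2} on the pair $(\mathcal{P}_{m+1},\mathcal{P}_m)$ legitimate. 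No step of the proof involves any computation.
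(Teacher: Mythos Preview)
Your proof is correct and follows essentially the same approach as the paper's own proof: establish that $P_0^i$ lies in a single $0$-branch of $\mathcal{P}_i$ via Remark~\ref{isi}, and then climb the tower of collapses one level at a time using Lemma~\ref{isi2}. The paper compresses this into three sentences (``the result follows then immediately by using iteratively Lemma~\ref{isi2}''), whereas you have spelled out the induction explicitly, handled the edge case $|S|\le 1$, verified the uniqueness of the $0$-branch containing each $P_b$, and made the label-persistence bookkeeping (Remarks~\ref{standing3} and~\ref{persist}) explicit --- all of which the paper leaves to the reader.
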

\begin{proof}
By Proposition~\ref{56-comb}, $h(\mathcal{P}_i)=0$. Then, by Remark~\ref{isi}, $P_0^i$ is contained in a single 0-branch of $\mathcal{P}_i$. The result follows then immediately by using iteratively Lemma~\ref{isi2}.
\end{proof}

A sequence $\{(p_i,\delta_i)\}_{i=0}^r$ of pairs of integers will be called a \emph{branching sequence} if the following conditions hold:
\begin{enumerate}
\item[(bs1)] $p_i\ge 2$ for $0\le i\le r$.
\item[(bs2)] $\delta_1=1$.
\item[(bs3)] For any $1\le i\le r$, if $\delta_i\notin\{\delta_j\}_{j=0}^{i-1}$ then $\delta_i=1+\max\{\delta_j\}_{j=0}^{i-1}$.
\end{enumerate}

Let $\mathcal{P}$ be a zero entropy $n$-periodic pattern and let $\{\mathcal{P}_i\}_{i=0}^r$ be the associated sequence of collapses. Let $x$ be any point of $\mathcal{P}$, with valence $\nu\ge1$. Relabel the points of $\mathcal{P}$ in such a way that $x=0$. Now, for any pattern $\mathcal{P}_i$ in the sequence of collapses, Lemma~\ref{isi3} tells us that the block of the maximal structure of $\mathcal{P}_i$ containing $0$ is contained in a single 0-branch $\delta_i$ of $\mathcal{P}$. It is easy to check that the sequence $\{(p_i,\delta_i)\}_{i=0}^r$, where $p_0$ is the period of $\mathcal{P}_0$ and $p_i$ is the cardinality of the blocks of the maximal structure in $\mathcal{P}_i$ for any $1\le i\le r$, satisfies properties (bs1--3) above. It will be called \emph{the branching sequence of $\mathcal{P}$ around $x$}. Once the indexing of the $x$-branches is fixed after the accorded convention, it is uniquely determined by the pattern $\mathcal{P}$ and the chosen point $x$ of $\mathcal{P}$. See Figure~\ref{bsfig} for an example of construction of the branching sequence. For the pattern $\mathcal{P}$ shown in that figure, the 0-branches are $Z_1=\{0,1,2,3,5,6,7,8,9,10,11,13,14,15\}$ and $Z_2=\{0,4,12\}$. The maximal trivial blocks have cardinality 2 in each pattern of the sequence of collapses. The blocks containing 0 are $\{0,1\}$ in $\mathcal{P}_0$, $\{0,2\}$ in $\mathcal{P}_1$, $\{0,4\}$ in $\mathcal{P}_2$ and $\{0,8\}$ in $\mathcal{P}$. Seen as sets of points of $\mathcal{P}$, they are respectively contained in $Z_1$, $Z_1$, $Z_2$ and $Z_1$. Collecting it all, we get that the branching sequence of $\mathcal{P}$ around 0 is $\{(2,1),(2,1),(2,2),(2,1)\}$.

\begin{figure}
\centering
\includegraphics[scale=0.65]{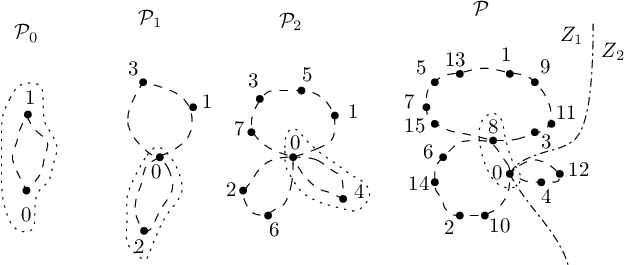}
\caption[fck]{A pattern $\mathcal{P}$ whose branching sequence around 0 is $\{(2,1),(2,1),(2,2),(2,1)\}$. The two 0-branches in $\mathcal{P}$ are denoted with $Z_1$ and $Z_2$ with the standard indexing convention.\label{bsfig}}
\end{figure}

The following observation follows directly from the definitions.

\begin{remark}\label{bidi}
Let $\{(p_i,\delta_i)\}_{i=0}^r$ be the branching sequence of a zero entropy pattern around an inner point $x$. Then, $x$ is bidirectional if and only if $\delta_{r-1}\ne\delta_r$.
\end{remark}

%
%
Now we reverse the process and consider an (abstract) branching sequence $S=\{(p_i,\delta_i)\}_{i=0}^r$. Let us see that from such a sequence we can construct a zero entropy $n$-periodic $\nu$-flower, where $n=p_0p_1\cdots p_r$ and $\nu=\max\{\delta_i\}_{i=0}^r$. Consider a $p_0$-periodic trivial pattern $\mathcal{P}_0$ and let us denote its unique discrete component by $C^0_{\delta_1}=C^0_1$ (property (bs2)). Assume now that a zero entropy periodic pattern $\mathcal{P}_i$ of period $p_0p_1\cdots p_i$ has been defined, with $d_i:=\max\{\delta_j\}_{j=0}^i$ discrete components labeled as $\{C^i_1,C^i_2,\ldots,C^i_{d_i}\}$, all adjacent to the point 0. Now we define a new pattern $\mathcal{P}_{i+1}$ of period $p_0p_1\cdots p_{i+1}$ by applying the following procedure. For any point $j$ of $\mathcal{P}_i$, set $K_j:=\{j+p_i,j+2p_i,\ldots,j+(p_{i+1}-1)p_i\}$. Note that, by (bs3), either $\delta_{i+1}\le d_i$, and in this case we set $d_{i+1}:=d_i$, or $\delta_{i+1}=d_i+1$, and in this case we set $d_{i+1}:=d_i+1$. The pattern $\mathcal{P}_{i+1}$ is then defined as a $d_{i+1}$-flower with inner point 0 and discrete components labeled as $\{C^{i+1}_1,C^{i+1}_2,\ldots,C^{i+1}_{d_i+1}\}$, in such a way that $K_0\subset C^{i+1}_{\delta_{i+1}}$ and for any point $j\ne 0$ of $\mathcal{P}_i$, $K_j\subset C^{i+1}_k$ if and only if $j\in C^i_k$. By iterating $r$ times this procedure, finally we obtain the prescribed $\nu$-flower $\mathcal{P}_r$, with the inner point conventionally labeled as 0 by construction. Such a flower, algorithmically constructed from the branching sequence $S$, will be denoted by $\mathcal{F}(S)$. To fit the intuition into the description of the algorithm, note that the combinatorial collapse of a zero entropy $k$-flower is either a $(k-1)$-flower when a petal fully coincides with a block of the maximal structure, and a $k$-flower otherwise.

\begin{example}\label{crucy}
Let $S=\{(2,1),(3,2),(2,2),(2,3)\}$. In Figure~\ref{bsalgorithm} we have shown the sequence of patterns leading to $\mathcal{F}(S)$ according to the prescribed algorithm.
\end{example}

\begin{figure}
\centering
\includegraphics[scale=0.65]{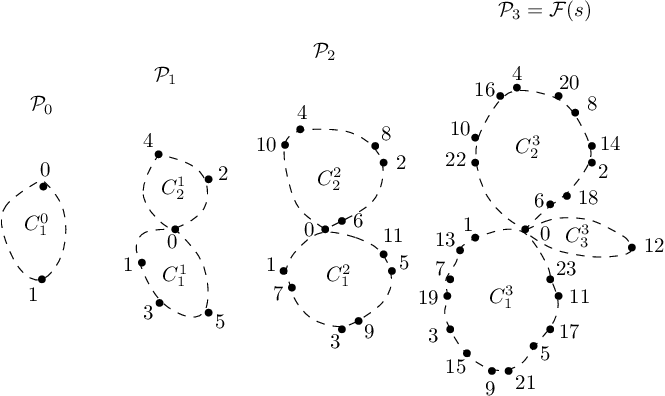}
\caption[fck]{The steps of the algorithm to generate the flower $\mathcal{F}(S)$ from the branching sequence $S=\{(2,1),(3,2),(2,2),(2,3)\}$.\label{bsalgorithm}}
\end{figure}


A branching sequence $S=\{(p_i,\delta_i)\}_{i=0}^r$ will be called \emph{minimal} if $\delta_{i+1}\ne\delta_i$ for all $0\le i<r$.

\begin{lemma}\label{bra0}
Let $S$ and $R$ be minimal branching sequences such that $\mathcal{F}(S)=\mathcal{F}(R)$. Then $S=R$, i.e. $S$ and $R$ have the same length and are identical term by term.
\end{lemma}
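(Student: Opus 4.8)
The plan is to recover the branching sequence $S$ from the flower $\mathcal{F}(S)$ by ``reading off'' the sequence of combinatorial collapses, and then to observe that minimality makes this reconstruction unambiguous. Concretely, given a zero entropy $\nu$-flower $\mathcal{F}$ with inner point labeled $0$, its sequence of collapses $\{\mathcal{P}_i\}_{i=0}^{r}$ (Definition~\ref{explosions}) is intrinsic to the pattern, and one checks that the algorithm producing $\mathcal{F}(S)$ is precisely the reverse of forming collapses: at each step $\mathcal{P}_{i}$ is the combinatorial collapse of $\mathcal{P}_{i+1}$. Hence the integers $p_0,p_1,\dots,p_r$ are determined by $\mathcal{F}$ (as the period of $\mathcal{P}_0$ and the block cardinalities of the successive maximal structures). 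It remains to see that the ``direction'' labels $\delta_i$ are also determined. For this I would invoke the notion of the branching sequence of a pattern around a point: since $\mathcal{F}=\mathcal{F}(S)$ is itself a flower with inner point $0$, its $0$-branches are exactly its petals, and by Lemma~\ref{isi3} the block $P_0^i$ of $\mathcal{P}_i$ containing $0$ lies in a unique petal; the index of that petal (under the standard indexing of $0$-branches by least positive time-distance) is forced. So the branching sequence of $\mathcal{F}$ around $0$ — call it $\widehat{S}$ — is an invariant of $\mathcal{F}$, and one verifies directly from the construction that $\mathcal{F}(\widehat{S})=\mathcal{F}$ and, more to the point, that $\widehat{S}$ agrees with $S$ in its $p_i$-entries and records the same petal-memberships.

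The subtlety, and the reason minimality is needed, is that $S$ itself need not equal $\widehat{S}$ in general: if two consecutive pairs have $\delta_i=\delta_{i+1}$, then at stage $i+1$ the new block $K_0$ lands in the \emph{same} petal $C^{i}_{\delta_i}$ that already contained the block-containing-$0$, so the petal ``absorbs'' the block and the combinatorial collapse of $\mathcal{P}_{i+1}$ is not $\mathcal{P}_i$ but a flower with one fewer petal — the two steps collapse into one with merged multiplier $p_i p_{i+1}$. Thus non-minimal sequences can produce the same flower as shorter sequences (and as other sequences with the multipliers grouped differently). Under the minimality hypothesis $\delta_{i+1}\ne\delta_i$ for all $i$, this absorption never happens: at every stage the new block $K_0$ goes into a petal different from the one currently holding $0$, so each step genuinely introduces a fresh layer in the block structure, $\mathcal{P}_i$ really is the combinatorial collapse of $\mathcal{P}_{i+1}$, and the sequence of collapses of $\mathcal{F}(S)$ has length exactly $r$ with block cardinalities exactly the $p_i$. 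Therefore $\widehat{S}=S$ whenever $S$ is minimal.

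Putting it together: if $S$ and $R$ are minimal with $\mathcal{F}(S)=\mathcal{F}(R)=:\mathcal{F}$, then by the previous paragraph $S=\widehat{S}$ and $R=\widehat{S}$, where $\widehat{S}$ is the branching sequence of $\mathcal{F}$ around its inner point $0$; hence $S=R$. In the write-up I would first record the elementary observation (from the description of the $\mathcal{F}(\cdot)$ algorithm together with the remark that the collapse of a $k$-flower is a $(k-1)$-flower or a $k$-flower according as a petal coincides with a maximal block or not) that minimality of $S$ is equivalent to each $\mathcal{P}_i$ in the construction being the combinatorial collapse of $\mathcal{P}_{i+1}$; then deduce that for minimal $S$ the constructed sequence $\{\mathcal{P}_i\}$ \emph{is} the sequence of collapses of $\mathcal{F}(S)$, which is an invariant of $\mathcal{F}(S)$; and finally read off the $\delta_i$ from the $0$-branch memberships via Lemma~\ref{isi3} and the fixed indexing convention. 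The main obstacle is the bookkeeping in that equivalence — tracking exactly when a petal absorbs a block and confirming that under minimality the petal structure of $\mathcal{F}(S)$ determines, layer by layer, both the $p_i$ and the $\delta_i$; once that is pinned down the uniqueness is immediate.
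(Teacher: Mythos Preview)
Your strategy is different from the paper's and is conceptually natural: recover $S$ from $\mathcal{F}(S)$ as an intrinsic invariant (the branching sequence of the flower around its centre), and then conclude $S=R$ because both equal that invariant. The paper instead argues directly, by induction on the index, that the first point at which $S$ and $R$ differ would force some specific integer to sit in different $0$-branches of $\mathcal{F}(S)$ and $\mathcal{F}(R)$, contradicting $\mathcal{F}(S)=\mathcal{F}(R)$. The paper's route is more elementary and entirely self-contained; yours essentially proves the stronger statement that the branching sequence of $\mathcal{F}(S)$ around $0$ equals $S$ whenever $S$ is minimal, which in the paper's logical order is derived \emph{after} Lemma~\ref{bra0} (via Remark~\ref{florsminimals} and Proposition~\ref{newresult}, both of which use Lemma~\ref{bra0}).

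That said, there is a real gap in what you call ``bookkeeping''. Your key equivalence --- minimality of $S$ iff each $\mathcal{P}_i$ is the combinatorial collapse of $\mathcal{P}_{i+1}$ --- needs the block structure $\{B_j\}$ of cardinality $p_{i+1}$ on $\mathcal{P}_{i+1}$ to be the \emph{maximal} trivial one. Your justification (``the new block $K_0$ goes into a petal different from the one currently holding $0$'') only rules out merging with the \emph{immediately preceding} layer; it does not by itself exclude a trivial $q$-block structure with $q<N_i$ whose $0$-block happens to sit in petal $\delta_{i+1}$ for other reasons. One can complete the argument, but it requires an honest induction on $i$: knowing that the maximal trivial block of $\mathcal{P}_i$ containing $0$ lies in petal $\delta_i\ne\delta_{i+1}$, one must show that for every proper divisor $g$ of $N_i$ the multiples of $g$ below $N_i$ are not all contained in petal $\delta_{i+1}$ of $\mathcal{P}_i$ (the easy case $g\mid N_{i-1}$ follows since $N_{i-1}$ itself lies in petal $\delta_i$; the case $g\nmid N_{i-1}$ needs further unwinding through earlier layers). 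This is doable but is more than bookkeeping, and until it is written out your argument is a sketch rather than a proof. The paper's term-by-term comparison avoids all of this by never needing maximality of any intermediate structure.
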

\begin{proof}
Set $S=\{(p_i,\delta_i)\}_{i=0}^r$ and $R=\{(q_i,\kappa_i)\}_{i=0}^{t}$. By Remark~\ref{trivialet}, the hypothesis that $\mathcal{F}(S)$ and $\mathcal{F}(R)$ are the same pattern can be reworded as follows: if both flowers are labeled in such a way that the respective inner points read as 0, then the respective sets of 0-branches coincide. In particular,
\begin{equation}\label{producte}
\prod_{i=0}^r p_i=\prod_{i=0}^t q_i.
\end{equation}

First we claim that $(p_1,\delta_1)=(q_1,\kappa_1)$. Indeed, by property (bs2), $\delta_1=\kappa_1=1$. Assume by way of contradiction that $p_1<q_1$ (the argument is symmetric when $q_1<p_1$). Then, from (\ref{producte}) it follows that $r\ge2$. Moreover, since $S$ is minimal,  $\delta_2\ne\delta_1$. Property (bs3) yields then that $\delta_2=2$. So, the algorithm of construction of $\mathcal{F}(S)$ and $\mathcal{F}(R)$ implies that the 0-branch indexed as 1 in $\mathcal{F}(S)$ contains the points $0,1,2,\ldots,p_1-1$ and the point $p_1$ is contained in the 0-branch  indexed as 2, while the 0-branch indexed as 1 in $\mathcal{F}(R)$ contains at least the points $0,1,2,\ldots,p_1-1,p_1$. In consequence, $\mathcal{F}(S)$ and $\mathcal{F}(R)$ are not the same pattern, a contradiction that proves the claim.

Assume now that all terms of $S$ and $R$ are identical up to an index $j\ge 1$ (the previous claim states that this is true when $j=1$). In this case, if $S$ has length $j$, then (\ref{producte}) implies that $R$ has also length $j$ and we are done. Assume that $r>j$ (the arguments and conclusions are the same if $t>j$). Set $k:=\prod_{i=0}^j p_i=\prod_{i=0}^j q_i$. From the algorithm of construction of $\mathcal{F}(S)$ and $\mathcal{F}(R)$, it follows that all points from 0 to $k-1$ are distributed identically inside the 0-branches of both flowers. The same arguments used above show then that $t>j$, and that if we assume $(p_{j+1},\delta_{j+1})\ne(q_{j+1},\kappa_{j+1})$, we reach a contradiction since the points $k,k+1,k+2,\ldots,kp_{j+1}-1$ will be distributed in different 0-branches of $\mathcal{F}(S)$ and $\mathcal{F}(R)$.
\end{proof}

\begin{remark}\label{florsminimals}
If $\mathcal{P}$ is a zero entropy flower, then the branching sequence of $\mathcal{P}$ around its unique inner point is minimal. Indeed, if for an index $i$ we had two consecutive terms $(p_i,\delta_i)$, $(p_{i+1},\delta_{i+1})$ with $\delta_i=\delta_{i+1}$, then, in the sequence $\{\mathcal{P}_i\}_{i=0}^r$ of collapses, the trivial blocks for the pattern $\mathcal{P}_i$ would not be maximal, since there would exist greater trivial blocks of cardinality $p_ip_{i+1}$. For example, let $\mathcal{P}$ be the rightmost pattern shown in Figure~\ref{bsalgorithm}, that is in fact the 3-flower constructed from $S=\{(2,1),(3,2),(2,2),(2,3)\}$. The sequence of collapses of $\mathcal{P}$ is \emph{not} $\{\mathcal{P}_i\}_{i=0}^3$ but $\{\mathcal{P}'_i\}_{i=0}^2$, with $\mathcal{P}'_0=\mathcal{P}_0$, $\mathcal{P}'_1=\mathcal{P}_2$ and $\mathcal{P}'_2=\mathcal{P}_3$. The branching sequence of $\mathcal{P}$ around 0 is then $S'=\{(2,1),(6,2),(2,3)\}$, which is minimal.
\end{remark}

Let $S=\{(p_i,\delta_i)\}_{i=0}^r$ be a branching sequence. Assume that $S$ is not minimal, i.e. for some $0\le j<r$ we have that $\delta_{j+1}=\delta_j$. Then we can consider a \emph{reduced sequence} $S'=\{(p'_i,\delta'_i)\}_{i=0}^{r-1}$ defined as $(p'_i,\delta'_i)=(p_i,\delta_i)$ for $0\le i<j$, $(p'_j,\delta'_j)=(p_jp_{j+1},\delta_j)$ and $(p'_i,\delta'_i)=(p_{i+1},\delta_{i+1})$ for $j<i\le r-1$. One can easily check that $S'$ satisfies (bs1--3) and is thus a branching sequence. The following result states that $S$ and $S'$ generate the same flower. It follows immediately from the algorithm of construction of $\mathcal{F}(S)$.

\begin{lemma}\label{bra1}
Let $S,S'$ be branching sequences such that $S'$ has been reduced from $S$. Then, $\mathcal{F}(S')=\mathcal{F}(S)$.
\end{lemma}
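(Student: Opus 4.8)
The plan is to unwind, step by step, the algorithm that produces the flowers $\mathcal{F}(S)$ and $\mathcal{F}(S')$, and to check that the two runs of the algorithm proceed in lockstep except for a single merged step. Write $S=\{(p_i,\delta_i)\}_{i=0}^r$ with $\delta_{j+1}=\delta_j$ for the index $j<r$ along which $S'$ is reduced, so that $S'=\{(p'_i,\delta'_i)\}_{i=0}^{r-1}$ with $(p'_i,\delta'_i)=(p_i,\delta_i)$ for $i<j$, $(p'_j,\delta'_j)=(p_jp_{j+1},\delta_j)$, and $(p'_i,\delta'_i)=(p_{i+1},\delta_{i+1})$ for $i>j$. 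Denote by $\mathcal{P}_0,\mathcal{P}_1,\dots,\mathcal{P}_r=\mathcal{F}(S)$ and by $\mathcal{P}'_0,\dots,\mathcal{P}'_{r-1}=\mathcal{F}(S')$ the intermediate patterns (together with their labelled lists $C^i_1,\dots,C^i_{d_i}$ of discrete components) produced by the two runs.

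First I would note that $\mathcal{P}'_i=\mathcal{P}_i$ for $0\le i\le j-1$: the trivial pattern $\mathcal{P}_0$ depends only on $p_0=p'_0$, and each of the next $j-1$ steps consumes the same pair in both runs, so an immediate induction gives the equality of the patterns and of their labelled component lists up to index $j-1$.

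The crux is to prove that $\mathcal{P}'_j=\mathcal{P}_{j+1}$, and this is precisely where the hypothesis $\delta_{j+1}=\delta_j$ is used. Passing from $\mathcal{P}_{j-1}$ to $\mathcal{P}_j$, each point $\ell$ of $\mathcal{P}_{j-1}$ is expanded into a block $\{\ell\}\cup K_\ell$ of $p_j$ points of $\mathcal{P}_j$, the block of $0$ landing inside $C^j_{\delta_j}$ and, for $\ell\ne0$, the block of $\ell$ landing inside $C^j_k$ precisely when $\ell\in C^{j-1}_k$. Passing next from $\mathcal{P}_j$ to $\mathcal{P}_{j+1}$, every point $m$ of $\mathcal{P}_j$ is expanded into a block of $p_{j+1}$ points of $\mathcal{P}_{j+1}$, the block of $0$ landing inside $C^{j+1}_{\delta_{j+1}}=C^{j+1}_{\delta_j}$, and for $m\ne0$ the block of $m$ landing inside $C^{j+1}_k$ precisely when $m\in C^j_k$. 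Composing the two expansions, each point $\ell$ of $\mathcal{P}_{j-1}$ gets expanded into a block of $p_jp_{j+1}$ points of $\mathcal{P}_{j+1}$; the block of $0$ is the union, over the points $m$ of the $\mathcal{P}_j$-block of $0$ (all of which lie in $C^j_{\delta_j}$), of their $\mathcal{P}_{j+1}$-blocks, and since $\delta_{j+1}=\delta_j$ all of these lie in $C^{j+1}_{\delta_j}$; meanwhile for $\ell\ne0$ the block of $\ell$ lies in $C^{j+1}_k$ exactly when $\ell\in C^{j-1}_k$. This is exactly the single expansion step prescribed by the pair $(p'_j,\delta'_j)=(p_jp_{j+1},\delta_j)$ applied to $\mathcal{P}'_{j-1}=\mathcal{P}_{j-1}$. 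One also checks that the second unprimed step does not create a new discrete component, since $\delta_{j+1}=\delta_j\le d_j$ forces $d_{j+1}=d_j$, and that this common value equals $d'_j$; hence the labelled component lists of $\mathcal{P}'_j$ and $\mathcal{P}_{j+1}$ coincide, and $\mathcal{P}'_j=\mathcal{P}_{j+1}$.

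Finally, for $i>j$ we have $(p'_i,\delta'_i)=(p_{i+1},\delta_{i+1})$, so starting from the common pattern $\mathcal{P}'_j=\mathcal{P}_{j+1}$ the remaining steps of the primed run reproduce verbatim the remaining steps of the unprimed run; a second easy induction yields $\mathcal{P}'_{r-1}=\mathcal{P}_r$, that is $\mathcal{F}(S')=\mathcal{F}(S)$. The only genuine work is the bookkeeping in the middle paragraph — tracking the block of the inner point $0$ under the composition of two expansions and verifying that the component relabelling does not drift — and that is where I expect the (mild) main obstacle to lie; everything else is a routine induction on the length of the branching sequence.
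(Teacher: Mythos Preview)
Your proof is correct and follows exactly the route the paper intends: the paper's own proof consists of the single sentence ``It follows immediately from the algorithm of construction of $\mathcal{F}(S)$,'' and what you have done is spell out that immediacy by tracking the intermediate flowers $\mathcal{P}_i$ and $\mathcal{P}'_i$ and checking that the merged step reproduces the composition of the two original steps. One small edge case to patch: when $j=0$ you refer to $\mathcal{P}_{j-1}$, which does not exist; here $\mathcal{P}'_0$ and $\mathcal{P}_1$ are both trivial patterns of period $p_0p_1$, so the equality $\mathcal{P}'_0=\mathcal{P}_1$ holds directly without the two-step composition argument.
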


The process of reducing a non-minimal branching sequence $S=\{(p_i,\delta_i)\}_{i=0}^r$ can be iterated as many times as necessary in order to finally obtain what we call the \emph{sequence fully reduced from $S$}, a minimal branching sequence  $\widehat{S}=\{(\widehat{p}_i,\widehat{\delta}_i)\}_{i=0}^{\widehat{r}}$ satisfying $\prod_{i=0}^r p_i=\prod_{i=0}^{\widehat{r}} \widehat{p}_i$.
One can easily check that it is unique and well defined. As a direct corollary of Lemma~\ref{bra1}, we get the following result.

\begin{corollary}\label{bra2}
Let $S$ be a branching sequence and let $\widehat{S}$ be the sequence fully reduced from $S$. Then, $\mathcal{F}(S)=\mathcal{F}(\widehat{S})$.
\end{corollary}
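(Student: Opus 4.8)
The plan is to prove the statement by a straightforward induction on the number of one-step reductions that transform $S$ into $\widehat{S}$. Recall that, by construction, $\widehat{S}$ is obtained from $S$ by repeatedly applying the reduction operation described just before Lemma~\ref{bra1}: as long as the current branching sequence is not minimal it contains two consecutive terms with $\delta_{j+1}=\delta_j$, and we replace them by the single term $(p_jp_{j+1},\delta_j)$. Each such step strictly decreases the length of the sequence, so the process stops after finitely many steps, say $m\ge 0$ of them, producing the minimal sequence $\widehat{S}$ with $\prod_i\widehat{p}_i=\prod_i p_i$.

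First I would dispose of the base case $m=0$: here $S$ is already minimal, $\widehat{S}=S$, and the claim is trivial. For the inductive step, suppose $m\ge 1$ and let $S'$ be the branching sequence obtained from $S$ by performing the first reduction step; by Lemma~\ref{bra1} we have $\mathcal{F}(S')=\mathcal{F}(S)$. Since $\widehat{S}$ is precisely the sequence fully reduced from $S'$, and reducing $S'$ to $\widehat{S}$ requires only $m-1$ further reduction steps, the induction hypothesis gives $\mathcal{F}(\widehat{S})=\mathcal{F}(S')$. Combining the two equalities yields $\mathcal{F}(S)=\mathcal{F}(\widehat{S})$, which is the assertion of the corollary.

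I do not expect any genuine obstacle here: the corollary is nothing more than a finite iteration of Lemma~\ref{bra1}. The only point one should make explicit is that ``the sequence fully reduced from $S$'' is unambiguous, that is, independent of the order in which the repetitions are removed; this has already been noted in the text (it follows from the uniqueness of the minimal branching sequence with prescribed value of $\prod_i p_i$ and prescribed set of $0$-branches), so no extra work is needed and the argument above applies verbatim to any chosen maximal chain of reductions.
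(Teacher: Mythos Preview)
Your proposal is correct and matches the paper's approach exactly: the paper states the corollary as a direct consequence of Lemma~\ref{bra1} via iteration, and your induction on the number of reduction steps is precisely the way to make this explicit.
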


In this section we have defined two procedures to generate a flower (equivalently, a set of branches). The first one uses openings to get a flower
$\mathcal{F}_{x}(\mathcal{P})$ given a pattern $\mathcal{P}$ and a point $x$ of $\mathcal{P}$, while the second one constructs a flower $\mathcal{F}(S)$ given an abstract branching sequence $S$. The next lemma, that follows immediately from the definitions and the labeling conventions of the points and branches, states that if $S$ is precisely the branching sequence of $\mathcal{P}$ around $x$, both flowers are the same as patterns.

\begin{lemma}\label{bra3}
Let $\mathcal{P}$ be a zero entropy pattern. Let $x$ be a point of $\mathcal{P}$ and let $S$ be the branching sequence of $\mathcal{P}$ around $x$. Then, $\mathcal{F}(S)=\mathcal{F}_{x}(\mathcal{P})$.
\end{lemma}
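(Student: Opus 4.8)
The plan is to combine Remark~\ref{trivialet} with an induction along the sequence of collapses of $\mathcal{P}$, matched step by step against the algorithm that builds $\mathcal{F}(S)$. After relabelling so that $x=0$, the pattern $\mathcal{F}_{x}(\mathcal{P})$ is by definition the flower whose set of $0$-branches equals the set of $x$-branches of $\mathcal{P}$; so by Remark~\ref{trivialet} it suffices to show that $\mathcal{F}(S)$ has exactly the same set of $0$-branches. Let $\{\mathcal{P}_i\}_{i=0}^{r}$ be the sequence of collapses of $\mathcal{P}$ (Definition~\ref{explosions}), let $S=\{(p_i,\delta_i)\}_{i=0}^{r}$ be its branching sequence around $x$, and write $\mathcal{F}_i$ for the $i$-th intermediate pattern in the construction of $\mathcal{F}(S)$ (called $\mathcal{P}_i$ there; we rename it to avoid the clash), so that $\mathcal{F}_r=\mathcal{F}(S)$. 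By Remark~\ref{persist} the label $0$ persists through the whole sequence $\{\mathcal{P}_i\}$, and I would prove by induction on $i$ that $\mathcal{F}_i$ and $\mathcal{P}_i$ have the same set of $0$-branches; the case $i=r$ is then the lemma. The base case $i=0$ is immediate, since $\mathcal{P}_0$ and $\mathcal{F}_0$ are trivial $p_0$-periodic patterns and hence each has the single $0$-branch $\{0,1,\dots,p_0-1\}$.

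For the inductive step I would compare how the $0$-branch partition changes from $\mathcal{P}_i$ to $\mathcal{P}_{i+1}$ with the rule the algorithm uses to pass from $\mathcal{F}_i$ to $\mathcal{F}_{i+1}$. Recall that $\mathcal{P}_i$ is the combinatorial collapse of $\mathcal{P}_{i+1}$: each point $j$ of $\mathcal{P}_i$ is the collapse of the block $P_j$ of the maximal structure of $\mathcal{P}_{i+1}$ (separated, by Proposition~\ref{56}), and $j\in P_j$. By Remark~\ref{isi} each block $P_j$ lies in a single $0$-branch of $\mathcal{P}_{i+1}$, which for $j\neq 0$ is the branch containing $j$; by Lemma~\ref{isi2} together with the triviality of the blocks, two points $j,k$ of $\mathcal{P}_i$ lying in the same $0$-branch of $\mathcal{P}_i$ have $P_j,P_k$ in the same $0$-branch of $\mathcal{P}_{i+1}$, and (using separability and the maximality of the trivial block structure) conversely; thus the $0$-branches of $\mathcal{P}_i$ correspond bijectively to the $0$-branches of $\mathcal{P}_{i+1}$ meeting the persisting points, each obtained from the corresponding branch of $\mathcal{P}_i$ by replacing every $j\neq 0$ by $P_j$. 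Finally $P_0\setminus\{0\}=K_0$, and by Lemma~\ref{isi3} this set lies in a single $0$-branch of $\mathcal{P}$, namely the one indexed $\delta_{i+1}$ by the very definition of the branching sequence around $x$. Since every new time label in the sets $K_j$ exceeds all labels of $\mathcal{P}_i$, the minimal positive label in each surviving $0$-branch is unchanged, so the index of each surviving branch is preserved and a genuinely new branch gets the next available index — exactly what conditions (bs2)--(bs3) impose on $\delta_{i+1}$. Comparing with the algorithm (which puts $K_0\subset C^{i+1}_{\delta_{i+1}}$ and $K_j\subset C^{i+1}_k$ iff $j\in C^i_k$ for $j\neq 0$) and invoking the induction hypothesis, $\mathcal{F}_{i+1}$ and $\mathcal{P}_{i+1}$ have the same set of $0$-branches.

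The main obstacle is the claim, inside the inductive step, that the $0$-branches of $\mathcal{P}_i$ are not spuriously merged when we pass to $\mathcal{P}_{i+1}$: the ``forward'' inclusion is exactly Lemma~\ref{isi2}, but the injectivity of the induced map on branches is where the separability (Proposition~\ref{56}) and the maximality of the trivial block structure really enter. Everything else — tracking labels via Remarks~\ref{standing3} and \ref{persist}, and translating between the collapse sequence of $\mathcal{P}$ and the construction of $\mathcal{F}(S)$ — is purely formal.
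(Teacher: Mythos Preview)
The paper gives no proof of this lemma; it simply asserts it ``follows immediately from the definitions and the labeling conventions of the points and branches.'' Your inductive unpacking along the sequence of collapses, matched against the algorithm for $\mathcal{F}(S)$, is the natural way to make this precise, and the overall architecture is sound.

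There is, however, one point where your sketch needs more care than you give it. Your inductive hypothesis compares the $0$-branches of $\mathcal{P}_{i+1}$ with the petals of $\mathcal{F}_{i+1}$, and for the placement of $K_0$ you invoke that $K_0$ lies in the $0$-branch of $\mathcal{P}$ indexed $\delta_{i+1}$ (which is the \emph{definition} of $\delta_{i+1}$). But the algorithm places $K_0$ in $C^{i+1}_{\delta_{i+1}}$, and for your induction you need this index to agree with the index of the $0$-branch of $\mathcal{P}_{i+1}$ (not of $\mathcal{P}$) containing $K_0$. Your paragraph about ``minimal positive labels'' only argues index-consistency between $\mathcal{P}_i$ and $\mathcal{P}_{i+1}$, not between $\mathcal{P}_{i+1}$ and $\mathcal{P}$. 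This is tied to the ``branches do not merge'' obstacle you flag: what you really need is that for $j,k<n_{i+1}$ the relation ``same $0$-branch'' is the same in $\mathcal{P}_{i+1}$ as in $\mathcal{P}$, with the induced indexings matching. One direction is Lemma~\ref{isi2} iterated; the converse (and hence the index matching) is what is not yet justified, and ``separability and maximality'' alone do not obviously give it.

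A cleaner route that sidesteps the intermediate branch structures entirely is this: for each $0<m<n$, unwind the algorithm to see that the petal of $m$ in $\mathcal{F}(S)$ is $\delta_{\ell+1}$, where $\ell$ is the largest index with $n_\ell\mid m$ (using the convention $n_{-1}=1$, $\delta_0=1$). On the $\mathcal{P}$ side, set $m':=m\bmod n_{\ell+1}$; then $m'\in P_0^{\ell+1}\setminus\{0\}$, so $m'\in Z_{\delta_{\ell+1}}$ by Lemma~\ref{isi3}, and $m$ lies in the same $0$-branch as $m'$ by iterating Remark~\ref{isi}/Lemma~\ref{isi2} (the full expansion of $m'\neq 0$ from $\mathcal{P}_{\ell+1}$ to $\mathcal{P}$ stays in a single $0$-branch). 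Thus every point lands in the same branch in both pictures, and Remark~\ref{trivialet} finishes. This is presumably the ``immediate'' argument the authors have in mind, and it avoids ever comparing the $0$-branch structure of $\mathcal{P}_i$ with that of $\mathcal{P}$.
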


Now we are ready to use all techniques and results of this section to get the following proposition and the subsequent corollary, that will be crucial in the proof of Theorem~\ref{2inners_3op}.

\begin{proposition}\label{newresult}
Let $\mathcal{P}$ be a zero entropy periodic pattern and let $x$ be a point of $\mathcal{P}$. Let $S$ be the branching sequence of $\mathcal{P}$ around $x$ and let $\widehat{S}$ be the sequence fully reduced from $S$. Then, the branching sequence of $\mathcal{F}_x(\mathcal{P})$ around $x$ is $\widehat{S}$.
\end{proposition}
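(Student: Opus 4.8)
The plan is to reduce the statement to the uniqueness of minimal branching sequences (Lemma~\ref{bra0}). First I would observe that $\mathcal{F}_x(\mathcal{P})$ is obtained from $\mathcal{P}$ by successively performing openings, so by Theorem~\ref{5.3} it has entropy zero; hence it is a zero entropy $n$-periodic pattern and its branching sequence around $x$ is well defined. Call it $R$. Since $\mathcal{F}_x(\mathcal{P})$ is a flower with unique inner point $x$, Remark~\ref{florsminimals} ensures that $R$ is minimal, and $\widehat{S}$ is minimal by the definition of the fully reduced sequence. Thus, by Lemma~\ref{bra0}, it suffices to prove $\mathcal{F}(R)=\mathcal{F}(\widehat{S})$.

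To get this, I would evaluate both sides via Lemma~\ref{bra3}. On one side, Lemma~\ref{bra3} applied to $\mathcal{P}$ and $x$ gives $\mathcal{F}(S)=\mathcal{F}_x(\mathcal{P})$, and Corollary~\ref{bra2} gives $\mathcal{F}(\widehat{S})=\mathcal{F}(S)$; hence $\mathcal{F}(\widehat{S})=\mathcal{F}_x(\mathcal{P})$. On the other side, Lemma~\ref{bra3} applied to the zero entropy pattern $\mathcal{F}_x(\mathcal{P})$ and its inner point $x$ gives $\mathcal{F}(R)=\mathcal{F}_x(\mathcal{F}_x(\mathcal{P}))$. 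So the whole proof comes down to the idempotency statement $\mathcal{F}_x(\mathcal{F}_x(\mathcal{P}))=\mathcal{F}_x(\mathcal{P})$, which holds because $\mathcal{F}_x(\mathcal{P})$ already has $x$ as its only inner point, so no opening of the type used in the construction of $\mathcal{F}_x(\cdot)$ is available (equivalently, by Remark~\ref{trivialet}, because $\mathcal{F}_x(\mathcal{P})$ and $\mathcal{P}$ have the same set of $x$-branches). Combining, $\mathcal{F}(R)=\mathcal{F}_x(\mathcal{P})=\mathcal{F}(\widehat{S})$, and Lemma~\ref{bra0} finishes the argument.

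I do not expect a substantive obstacle here, since the content is packed into the earlier lemmas. The one point requiring care is the bookkeeping of labeling conventions --- recalling that the branching sequence ``around $x$'' is defined after relabeling $x$ as $0$, and that $\mathcal{F}_x(\mathcal{P})$ lives on the same label set as $\mathcal{P}$ --- so that the two invocations of Lemma~\ref{bra3} genuinely refer to the same patterns and the equalities above are equalities of patterns in the sense of Remark~\ref{trivialet}. Once the conventions fixed in Remarks~\ref{standing3} and \ref{persist} are respected, this is routine.
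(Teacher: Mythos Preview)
Your proposal is correct and follows essentially the same route as the paper's proof: both apply Lemma~\ref{bra3} to $\mathcal{P}$ and to $\mathcal{F}_x(\mathcal{P})$, invoke Corollary~\ref{bra2} and the idempotency $\mathcal{F}_x(\mathcal{F}_x(\mathcal{P}))=\mathcal{F}_x(\mathcal{P})$, and then conclude via Lemma~\ref{bra0} using the minimality of $R$ (Remark~\ref{florsminimals}) and of $\widehat{S}$. Your added remark that $\mathcal{F}_x(\mathcal{P})$ has entropy zero (so that its branching sequence is defined) is a welcome clarification that the paper leaves implicit.
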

\begin{proof}
By Lemma~\ref{bra3},
\begin{equation}\label{part1}
\mathcal{F}(S)=\mathcal{F}_x(\mathcal{P}).
\end{equation}
Let $R$ be the branching sequence of $\mathcal{F}_x(\mathcal{P})$ around $x$. We want to see that $R=\widehat{S}$. Since $\mathcal{F}_x(\mathcal{F}_x(\mathcal{P}))=\mathcal{F}_x(\mathcal{P})$, using again Lemma~\ref{bra3} yields
\begin{equation}\label{part2}
\mathcal{F}(\mathcal{R})=\mathcal{F}_x(\mathcal{P}).
\end{equation}
On the other hand, by Corollary~\ref{bra2},
\begin{equation}\label{part3}
\mathcal{F}(S)=\mathcal{F}(\widehat{S}).
\end{equation}
From (\ref{part1}), (\ref{part2}) and (\ref{part3}) we get then that
\begin{equation}\label{part4}
\mathcal{F}(R)=\mathcal{R}(\widehat{S}).
\end{equation}
Since $\widehat{S}$ is minimal by definition of a fully reduced sequence and $R$ is minimal by Remark~\ref{florsminimals}, then (\ref{part4}) and Lemma~\ref{bra0} imply that $R=\widehat{S}$.
\end{proof}

\begin{corollary}\label{bra4}
Let $\mathcal{P}$ and $\mathcal{Q}$ be two zero entropy $n$-periodic patterns. Let $x$ and $y$ be inner points of $\mathcal{P}$ and $\mathcal{Q}$ respectively. Let $\widehat{S}$ and $\widehat{R}$ be the fully reduced sequences of $\mathcal{P}$ and $\mathcal{Q}$ around $x$ and $y$ respectively. If $\mathcal{F}_{x}(\mathcal{P})=\mathcal{F}_{y}(\mathcal{Q})$ then $\widehat{S}=\widehat{R}$, i.e. both sequences have the same length and are identical term by term.
\end{corollary}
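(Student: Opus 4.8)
The plan is to obtain the corollary almost immediately from Proposition~\ref{newresult}, which already isolates the substantive work (identifying the branching sequence of an opened pattern around its inner point).

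First I would apply Proposition~\ref{newresult} to the two data $(\mathcal{P},x)$ and $(\mathcal{Q},y)$. It tells us that $\widehat{S}$ is precisely the branching sequence of $\mathcal{F}_x(\mathcal{P})$ around $x$, and that $\widehat{R}$ is precisely the branching sequence of $\mathcal{F}_y(\mathcal{Q})$ around $y$. So it suffices to prove that these two branching sequences agree.

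Next I would argue that $\mathcal{F}(\widehat{S})=\mathcal{F}(\widehat{R})$. Note that $\mathcal{F}_x(\mathcal{P})$ and $\mathcal{F}_y(\mathcal{Q})$ are zero entropy patterns, being obtained from the zero entropy patterns $\mathcal{P}$ and $\mathcal{Q}$ by iterated openings (Theorem~\ref{5.3}), so Lemma~\ref{bra3} applies to each of them together with its unique inner point: since $\widehat{S}$ is the branching sequence of $\mathcal{F}_x(\mathcal{P})$ around $x$ we get $\mathcal{F}(\widehat{S})=\mathcal{F}_x(\mathcal{F}_x(\mathcal{P}))=\mathcal{F}_x(\mathcal{P})$, the last equality holding because $\mathcal{F}_x(\mathcal{P})$ is already a flower with inner point $x$ and hence admits no further openings of the relevant type; similarly $\mathcal{F}(\widehat{R})=\mathcal{F}_y(\mathcal{Q})$. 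The hypothesis $\mathcal{F}_x(\mathcal{P})=\mathcal{F}_y(\mathcal{Q})$ then gives $\mathcal{F}(\widehat{S})=\mathcal{F}(\widehat{R})$. Since $\widehat{S}$ and $\widehat{R}$ are minimal (being fully reduced sequences), Lemma~\ref{bra0} forces $\widehat{S}=\widehat{R}$, with the same length and term by term, which is the assertion.

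The only point requiring care — and the closest thing to an obstacle here — is the bookkeeping of the labeling and branch-indexing conventions (Remarks~\ref{standing3} and \ref{persist}, together with the accorded indexing of branches by minimal positive time distance): one must make sure that, when $\mathcal{F}_x(\mathcal{P})$ and $\mathcal{F}_y(\mathcal{Q})$ are identified as time-labeled patterns, their inner points $x$ and $y$ really correspond to the same point and their branches get indexed in the same way, so that the branching sequence around the inner point is unambiguous. This is immediate from the uniqueness of the inner point of a flower and from the fact that the indexing convention depends only on the pattern and the chosen point; consequently one could even skip the $\mathcal{F}(\widehat S)=\mathcal{F}(\widehat R)$ detour and conclude directly that $\widehat S$ and $\widehat R$ are both ``the branching sequence of the common flower around its inner point''. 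I would nonetheless present the argument via Lemmas~\ref{bra3} and \ref{bra0}, since it parallels the proof of Proposition~\ref{newresult} and keeps the reasoning purely combinatorial.
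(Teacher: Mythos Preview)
Your proposal is correct and is exactly the intended argument: the paper states Corollary~\ref{bra4} without proof precisely because it follows at once from Proposition~\ref{newresult}, which identifies $\widehat{S}$ and $\widehat{R}$ as the branching sequences of the common flower $\mathcal{F}_x(\mathcal{P})=\mathcal{F}_y(\mathcal{Q})$ around its unique inner point. Your detour through Lemmas~\ref{bra3} and~\ref{bra0} is harmless but unnecessary, as you yourself note; the direct conclusion from Proposition~\ref{newresult} plus uniqueness of the inner point suffices.
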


\section{Proof of Theorem~\ref{2inners_3op}}\label{S7}
Recall that the hypothesis of Theorem~\ref{2inners_3op} is that we have an $n$-periodic pattern $\mathcal{P}$ with at least two inner points and at least three openings. Moreover, $h(\mathcal{P})>0$ and any opening has entropy zero. Under these conditions, we have to prove that $\mathcal{P}$ is $\pi$-reducible for some basic path $\pi$. The name $\mathcal{O}$ that we will use to denote zero entropy patterns in this section stands for \emph{opening}, in the spirit of Theorem~\ref{2inners_3op}.

The following is a simple remark about how the set of $x$-branches, where $x$ is a point of a pattern $\mathcal{P}$, can change after performing an opening of $\mathcal{P}$.

\begin{remark}\label{branchop}
Let $x$ be a point of a pattern $\mathcal{P}$ and let $\mathcal{O}$ be an opening of $\mathcal{P}$. If $\mathcal{O}$ has been obtained by joining two discrete components not adjacent at $x$
(equivalently, the valence of $x$ in $\mathcal{O}$ equals the valence of $x$ in $\mathcal{P}$), then $\mathcal{F}_x(\mathcal{O})=\mathcal{F}_x(\mathcal{P})$. As an example, consider the pattern $\mathcal{P}$ and the opening $\mathcal{O}$ shown in Figure~\ref{brutty2}. Take $x=1$. In this case, $\mathcal{F}_x(\mathcal{O})=\mathcal{F}_x(\mathcal{P})$ is a 2-flower whose petals can be labeled as $\{0,3\}$ and $\{0,1,2,4,5,6\}$. On the other hand, if $\mathcal{O}$ has been obtained by joining two discrete components adjacent at $x$ (equivalently, the valence of $x$ in $\mathcal{O}$ is one less than the valence of $x$ in $\mathcal{P}$), then $\mathcal{F}_x(\mathcal{O})$ is an opening of $\mathcal{F}_x(\mathcal{P})$. As an example, take $x=5$ in the previous example. Here $\mathcal{F}_x(\mathcal{P})$ is a 3-flower whose petals can be labeled as $\{0,3,5,6\}$, $\{0,2\}$ and $\{0,1,4\}$, while $\mathcal{F}_x(\mathcal{O})$ is a 2-flower whose petals can be labeled as $\{0,3,5,6\}$ and $\{0,1,2,4\}$, i.e. an opening of $\mathcal{F}_x(\mathcal{P})$.
\end{remark}

Recall that the integer labels of the points of a pattern $\mathcal{P}$ are by default preserved when performing an opening of $\mathcal{P}$. So, in the following statement we use the same letter $x$ to refer indistinctly to a point of a pattern and to the corresponding point of an opening.

\begin{lemma}\label{2inners_3op_QR}
Let $\mathcal{P}$ be an $n$-periodic pattern with positive entropy such that any opening of $\mathcal{P}$ has entropy zero.
Assume that $\mathcal{P}$ has at least two inner points and at least three openings. Then, there exist a point $x$ of $\mathcal{P}$ and
two different openings $\mathcal{O}$ and $\mathcal{R}$ of $\mathcal{P}$ such that:
\begin{enumerate}
\item $x$ is a bidirectional inner point in $\mathcal{O}$.
\item $x$ is an inner point in $\mathcal{R}$.
\item One of the following statements holds:
\begin{enumerate}
\item[(c1)] $\mathcal{F}_x(\mathcal{O})=\mathcal{F}_x(\mathcal{R})$
\item[(c2)] $\mathcal{F}_x(\mathcal{O})$ is an opening of $\mathcal{F}_x(\mathcal{R})$.
\end{enumerate}
\end{enumerate}
\end{lemma}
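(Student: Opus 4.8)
The plan is to construct the triple $(x,\mathcal{O},\mathcal{R})$ explicitly, feeding openings of $\mathcal{P}$ into Lemma~\ref{critic} and reading off the flower relation from the dichotomy in Remark~\ref{branchop}. First I would record two structural facts. The first: performing an opening of $\mathcal{P}$ decreases by exactly one the valence of the inner point at which it is performed and leaves every other valence unchanged; hence any inner point of an opening of $\mathcal{P}$ is already an inner point of $\mathcal{P}$, so every bidirectional inner point obtained below lies in $\mathcal{P}$. The second: having at least two inner points forces $\mathcal{P}$ to have at least three discrete components, so every opening of $\mathcal{P}$ is nontrivial and, having zero entropy, has a bidirectional inner point by Lemma~\ref{critic}. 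Writing $N\ge3$ for the number of openings of $\mathcal{P}$ and $\nu_x\ge2$ for the valence of an inner point $x$, I would also note that each opening is performed at a unique inner point, that exactly $N-\binom{\nu_x}{2}$ openings are not performed at $x$, that this count is always $\ge1$ (the second inner point contributes at least one opening not performed at $x$), and that it is $\ge2$ whenever $\nu_x=2$.

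Next I would isolate the core as a claim: there are an inner point $x$ of $\mathcal{P}$ and an opening $\mathcal{O}$ of $\mathcal{P}$ with $x$ bidirectional inner in $\mathcal{O}$ and such that either $\mathcal{O}$ is performed at $x$, or at least two openings of $\mathcal{P}$ are not performed at $x$. Granting the claim, the proof finishes quickly: choose an opening $\mathcal{R}\ne\mathcal{O}$ that is not performed at $x$ (possible by the claim). Then the valence of $x$ is unchanged in $\mathcal{R}$, so $x$ is inner in $\mathcal{R}$, which is (2), and $\mathcal{F}_x(\mathcal{R})=\mathcal{F}_x(\mathcal{P})$ by Remark~\ref{branchop}. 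If $\mathcal{O}$ is not performed at $x$ then likewise $\mathcal{F}_x(\mathcal{O})=\mathcal{F}_x(\mathcal{P})=\mathcal{F}_x(\mathcal{R})$, which is (c1); if $\mathcal{O}$ is performed at $x$ then $\mathcal{F}_x(\mathcal{O})$ is an opening of $\mathcal{F}_x(\mathcal{P})=\mathcal{F}_x(\mathcal{R})$ by Remark~\ref{branchop}, which is (c2). Property (1) is built into the claim.

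To prove the claim I would start from an arbitrary opening $\mathcal{O}_0$ of $\mathcal{P}$ with a bidirectional inner point $x_0$, which is an inner point of $\mathcal{P}$ by the first observation. If $\mathcal{O}_0$ is performed at $x_0$, or if it is not but at least two openings are not performed at $x_0$, the claim holds with $(x_0,\mathcal{O}_0)$. The remaining case is that $\mathcal{O}_0$ is performed away from $x_0$ and is the unique opening not performed at $x_0$; then $\binom{\nu_{x_0}}{2}=N-1\ge2$, so $\nu_{x_0}\ge3$ and there is an opening $\mathcal{O}_1$ performed at $x_0$. Let $x_1$ be a bidirectional inner point of $\mathcal{O}_1$. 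If $x_1=x_0$ the claim holds with $(x_0,\mathcal{O}_1)$, which is performed at $x_0$. If $x_1\ne x_0$, then $\mathcal{O}_1$ is performed away from $x_1$, while all $N-1\ge2$ openings performed at $x_0$ are not performed at $x_1$; hence at least two openings are not performed at $x_1$, and the claim holds with $(x_1,\mathcal{O}_1)$.

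The step I expect to be the crux is precisely this last swap. A single application of Lemma~\ref{critic} need not produce a bidirectional point with enough free openings around it, and the opening relation demanded in (c2) is not symmetric: the opening performed at $x$ must be the witness of bidirectionality, not the auxiliary one. So one must be ready to discard the first bidirectional point and re-run Lemma~\ref{critic} on an opening performed at it, and the verification that this second pass always lands in a favourable case is exactly where the hypotheses of at least two inner points and at least three openings are both genuinely needed.
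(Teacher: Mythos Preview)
Your argument is correct and uses the same two ingredients as the paper (Lemma~\ref{critic} to produce a bidirectional inner point, Remark~\ref{branchop} to translate the opening location into the flower relation), but the organization is genuinely different and, in my view, tidier. The paper splits into two cases according to whether $\mathcal{P}$ has exactly two or at least three inner points; in the two--inner--point case it exhibits one representative configuration (valences $2$ and $3$), chooses the openings by hand there, and remarks that the other configurations are similar, while in the three--inner--point case a single pass through Lemma~\ref{critic} suffices because there is always a spare inner point at which to perform $\mathcal{R}$. Your proof replaces this dichotomy by a uniform counting argument plus a possible second pass: if the first bidirectional point $x_0$ is bad (only $\mathcal{O}_0$ is performed away from it), you infer $\binom{\nu_{x_0}}{2}=N-1\ge2$, switch to an opening $\mathcal{O}_1$ at $x_0$, and show the new bidirectional point is necessarily good. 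This avoids the hand--waved ``the proof can be trivially extended to any other case'' in the paper's Case~1 and makes it transparent that the only obstruction is the single configuration where all but one opening sit at $x_0$. The price is that your swap step is slightly more delicate to verify (one has to track that in Case~C there are exactly two inner points, so $x_1\ne x_0$ forces $x_1$ to be the other one and the count goes through); the paper trades that verification for explicit pictures.
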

\begin{proof}
To prove the result we consider two cases.

\begin{case}{1} $\mathcal{P}$ has exactly two inner points. \end{case}
In this case, the hypothesis imply that at least one inner point has valence larger than 2 and that $\mathcal{P}$ has at least four different openings. Let us consider for instance that $\mathcal{P}$ has one inner $\alpha$ with valence 2 and one inner $\beta$ with valence 3. The proof can be trivially extended to any other case. In this situation, $\mathcal{P}$ has four discrete
components, which we label by $C_0$, $C_1$, $C_2$ and $C_3$. See Figure~\ref{fig2} for a representation of $\mathcal{P}$ and the three openings that we will use below.

According to the notation in Figure~\ref{fig2} we consider $\mathcal{O}$ to be the opening of $\mathcal{P}$ corresponding to the union $C_0\cup C_2$. The pattern $\mathcal{O}$ is a triple chain with two inner points $\alpha$ and $\beta$. Let $x$ be a bidirectional inner point of $\mathcal{O}$, that exists by Proposition~\ref{critic}. Then, (a) holds. Consider now a relabeling of the points of $\mathcal{P}$ (and, in consequence, of $\mathcal{O}$) such that $x=0$. We have now two possibilities.

\begin{figure}
\centering
\includegraphics[scale=0.65]{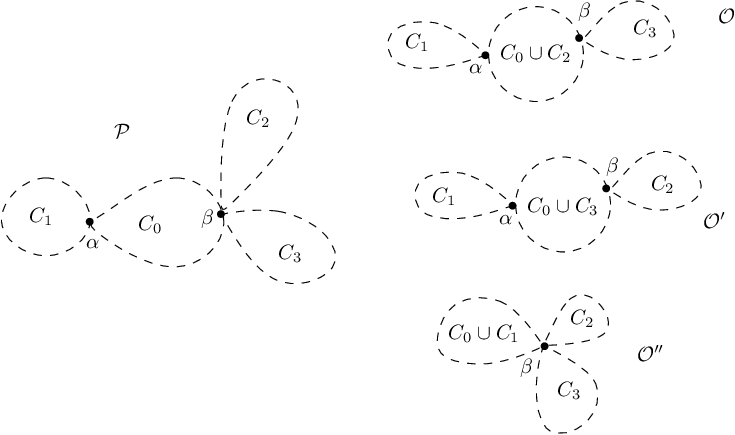}
\caption[fck]{Three possible openings for case 1 in the proof of Lemma~\ref{2inners_3op_QR}.\label{fig2}}
\end{figure}

If $0=\alpha$ then we take $\mathcal{R}$ as the opening $\mathcal{O}'$ corresponding to the union $C_0\cup C_3$. So, (b) is satisfied. Moreover, neither $\mathcal{O}$ nor $\mathcal{R}$ have been formed by joining together discrete components adjacent to 0. It follows that the valence of 0 in $\mathcal{P}$, $\mathcal{O}$ and $\mathcal{R}$ is the same and (c1) follows from Remark~\ref{branchop}.

If $0=\beta$ then we take $\mathcal{R}$ as the opening $\mathcal{O}''$ corresponding to the union $C_0\cup C_1$. So, (b) is satisfied. Moreover, $\mathcal{R}$ has only one inner point, $\beta=0$. In this case the valence of $0$ in $\mathcal{R}$ equals the valence of $0$ in $\mathcal{P}$ and it is one larger than in $\mathcal{O}$. Thus, (c2) follows from Remark~\ref{branchop}.

\begin{case}{2} $\mathcal{P}$ has at least three inner points. \end{case}

Let $\mathcal{O}$ be an arbitrary opening of $\mathcal{P}$. Let $x$ be a bidirectional inner point of $\mathcal{O}$, that exists by Proposition~\ref{critic}. Then, (a) holds. Consider now a relabeling of the points of $\mathcal{P}$ (and, in consequence, of $\mathcal{O}$) such that $x=0$. Let $\alpha\ne0$ and $\beta\ne0$ be two different inner points of $\mathcal{P}$.

If $\mathcal{O}$ has been obtained by joining two discrete components adjacent to $\alpha$, then we choose $\mathcal{R}$ as any opening obtained by joining two discrete components adjacent to $\beta$. In this case, the valence of $0$ is the same in the three patterns $\mathcal{P}$, $\mathcal{O}$ and $\mathcal{R}$ (See Figure~\ref{fig3}). Thus, (b) holds and, by Remark~\ref{branchop}, (c1) is also satisfied.

\begin{figure}
\centering
\includegraphics[scale=0.65]{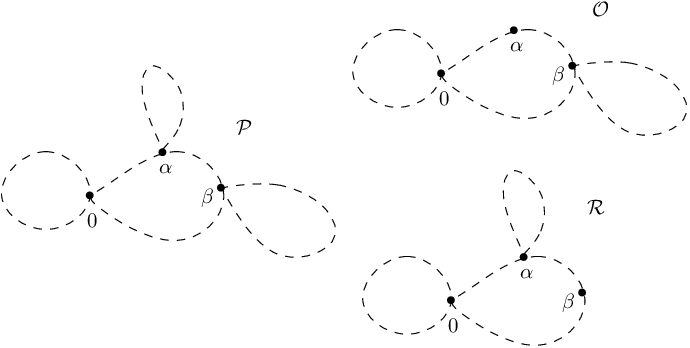}
\caption[fck]{Illustration of case 2 (first subcase) in the proof of Lemma~\ref{2inners_3op_QR}.\label{fig3}}
\end{figure}

\begin{figure}
\centering
\includegraphics[scale=0.65]{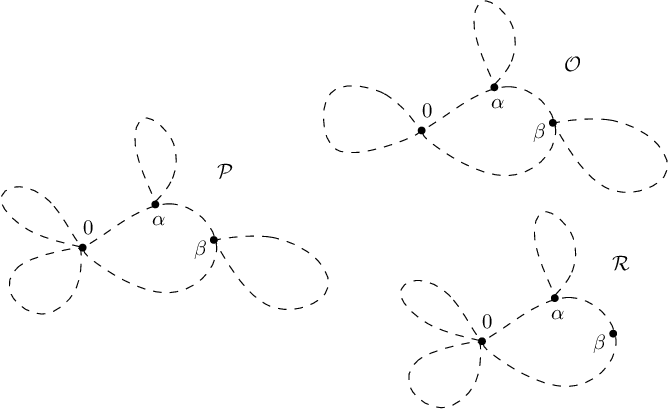}
\caption[fck]{Illustration of case 2 (second subcase) in the proof of Lemma~\ref{2inners_3op_QR}.\label{fig4}}
\end{figure}

Finally, if $\mathcal{O}$ has been formed by joining two discrete components adjacent to $0$, then we choose $\mathcal{R}$ as an opening obtained by joining two discrete components adjacent to $\beta$. In this case, the valence of $0$ in $\mathcal{P}$ and $\mathcal{R}$ is the same and one larger than the valence of $0$ in $\mathcal{O}$. In particular, the valence of $0$ in $\mathcal{P}$ and $\mathcal{R}$ is larger than two. Thus, (b) holds and, by Remark~\ref{branchop}, (c2) is satisfied (see Figure~\ref{fig4}).
\end{proof}

To prove Theorem~\ref{2inners_3op}, we will use branching sequences in the two situations (c1) and (c2) given by Lemma~\ref{2inners_3op_QR}(c). To deal with (c2), we need to relate the branching sequences of both a flower $\mathcal{F}$ and an opening $\mathcal{F}'$ of $\mathcal{F}$.

\begin{lemma}\label{2flors}
Let $\mathcal{F}$ be a zero entropy periodic $\nu$-flower and let $R=\{(q_i,\kappa_i)\}_{i=0}^t$ be the branching sequence of $\mathcal{F}$ around its unique inner point $x$. Let $\mathcal{F}'$ be an opening of $\mathcal{F}$ obtained by joining two discrete components corresponding to two $x$-branches labeled as $j_1,j_2$, with $1\le j_1<j_2\le\nu$. Set $R':=\{(q_i,\kappa'_i)\}_{i=0}^t$, with $\kappa'_i$ defined as

\[ \kappa'_i=\left\{ \begin{array}{lcl}
\kappa_i & \mbox{if} & \kappa_i<j_2 \\
j_1 & \mbox{if} & \kappa_i=j_2 \\
\kappa_i-1 & \mbox{if} & \kappa_i>j_2
\end{array}\right. \]
Then, $R'$ is a branching sequence and the sequence fully reduced from $R'$ is the branching sequence of $\mathcal{F}'$ around $x$.
\end{lemma}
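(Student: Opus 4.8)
The plan is to proceed in three parts. First I would check that $R'$ is again a branching sequence, i.e.\ that it satisfies (bs1)--(bs3). Then I would prove the core identity $\mathcal{F}(R')=\mathcal{F}'$, where $\mathcal{F}(\,\cdot\,)$ is the flower-generating algorithm of Section~\ref{S6b}. Finally, the statement about fully reduced sequences would follow from the uniqueness results already available (Lemma~\ref{bra3}, Corollary~\ref{bra2}, Lemma~\ref{bra0}).

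For the first part, (bs1) is immediate because the first coordinates $q_i$ are not modified. For (bs2) and (bs3), the point is that the prescription for $\kappa'_i$ is precisely $\kappa'_i=\phi(\kappa_i)$, where $\phi\colon\{1,\dots,\nu\}\longrightarrow\{1,\dots,\nu-1\}$ is the surjection given by $\phi(k)=k$ for $k<j_2$, $\phi(j_2)=j_1$ and $\phi(k)=k-1$ for $k>j_2$. Since $j_1<j_2$, one checks at once that $\phi$ carries every initial segment $\{1,\dots,m\}$ onto an initial segment (onto $\{1,\dots,m\}$ when $m<j_2$ and onto $\{1,\dots,m-1\}$ when $m\ge j_2$). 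As $R$ is the branching sequence of a pattern we have $\kappa_0=1$, and then (bs2)--(bs3) are equivalent to saying that the set of values $\{\kappa_0,\dots,\kappa_i\}$ is an initial segment of the positive integers for every $i$. Applying $\phi$ termwise, $\{\kappa'_0,\dots,\kappa'_i\}=\phi(\{\kappa_0,\dots,\kappa_i\})$ is again an initial segment (it contains $\phi(1)=1$), whence $R'$ satisfies (bs1)--(bs3).

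For the core identity, I would run in parallel the algorithm producing $\mathcal{F}(R)$ and the algorithm producing $\mathcal{F}(R')$, obtaining sequences $\mathcal{P}_0,\dots,\mathcal{P}_t=\mathcal{F}(R)$ and $\mathcal{P}'_0,\dots,\mathcal{P}'_t=\mathcal{F}(R')$. Because the integers $q_i$ coincide, at every stage the two patterns are flowers on the very same point set, with the same central point $0$ and the same expansion sets $K_j$; they differ only in the assignment of each $K_j$ to a petal. I would then prove, by induction on $i$, that the petal partition of $\mathcal{P}'_i$ is the $\phi$-coarsening of that of $\mathcal{P}_i$ --- that is, a point lies in the petal labelled $k$ of $\mathcal{P}_i$ if and only if it lies in the petal labelled $\phi(k)$ of $\mathcal{P}'_i$. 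The base case is the trivial $q_0$-pattern (single petal, labelled $1=\phi(1)$); the inductive step follows from the assignment rule of the algorithm, which sends $K_0$ to the petal $\kappa_{i+1}$ (resp.\ $\phi(\kappa_{i+1})$) and sends $K_j$ with $j\neq 0$ to the petal of $j$, together with the fact that $\phi$ maps initial segments to initial segments so that the petal indices stay synchronized. Evaluating at $i=t$ and recalling that $\mathcal{F}(R)=\mathcal{F}_x(\mathcal{F})$ by Lemma~\ref{bra3} while $\mathcal{F}_x(\mathcal{F})=\mathcal{F}$ (as $\mathcal{F}$ is already a flower with centre $x$), the $\phi$-coarsening of the petal partition of $\mathcal{F}$ merges exactly the petals $j_1$ and $j_2$ and keeps all the others, i.e.\ it is the opening $\mathcal{F}'$. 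Hence $\mathcal{F}(R')=\mathcal{F}'$. (When $\nu=2$ the flower $\mathcal{F}'$ degenerates to a trivial pattern; this causes no problem, since all the identities used below are between patterns and between branching sequences.)

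Finally, let $\widehat{R'}$ be the sequence fully reduced from $R'$ and let $R''$ be the branching sequence of $\mathcal{F}'$ around $x$. By Theorem~\ref{5.3}, $h(\mathcal{F}')\le h(\mathcal{F})=0$, so $\mathcal{F}'$ is a zero entropy flower and hence $R''$ is minimal by Remark~\ref{florsminimals}; and $\widehat{R'}$ is minimal by construction. By Corollary~\ref{bra2} and the core identity, $\mathcal{F}(\widehat{R'})=\mathcal{F}(R')=\mathcal{F}'$, while by Lemma~\ref{bra3} (and again because $\mathcal{F}'$ is a flower with centre $x$), $\mathcal{F}(R'')=\mathcal{F}_x(\mathcal{F}')=\mathcal{F}'$. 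Therefore $\mathcal{F}(\widehat{R'})=\mathcal{F}(R'')$ with both sequences minimal, so Lemma~\ref{bra0} gives $\widehat{R'}=R''$, which is exactly the assertion of the lemma. The main obstacle is the inductive step in the core identity: the bookkeeping needed to keep the petal labels of $\mathcal{P}_i$ and $\mathcal{P}'_i$ matched up through the relabelling $\phi$, in particular verifying that a new petal is created in $\mathcal{P}'_{i+1}$ precisely when $\kappa_{i+1}$ is a new value with $\kappa_{i+1}\neq j_2$, and that otherwise the block $K_0$ is correctly reabsorbed into the petal $j_1$. Everything else reduces to the already-established machinery of Section~\ref{S6b}.
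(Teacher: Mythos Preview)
Your proposal is correct and follows essentially the same three-step approach as the paper's own proof: verify (bs1)--(bs3) for $R'$, establish the core identity $\mathcal{F}(R')=\mathcal{F}'$, and then deduce the fully-reduced statement via Corollary~\ref{bra2}, Lemma~\ref{bra3}, Remark~\ref{florsminimals} and Lemma~\ref{bra0}. The paper is terser --- it dismisses the first two steps as ``easy to check'' and ``one easily gets'' --- whereas you supply the $\phi$-coarsening induction explicitly and also justify that $\mathcal{F}'$ has zero entropy (needed for Remark~\ref{florsminimals}), a point the paper leaves implicit.
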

\begin{proof}
It is easy to check directly from the definition of $R'$ that properties (bs1--3) satisfied by $R$ are inherited by $R'$. Thus, $R'$ is a branching sequence. By checking the steps of the algorithm of construction of the flower $\mathcal{F}(R')$, one easily gets that $\mathcal{F}(R')=\mathcal{F}'$.

Let $\widehat{R'}$ be the sequence fully reduced from $R'$. By Corollary~\ref{bra2}, $\mathcal{F}'=\mathcal{F}(R')=\mathcal{F}(\widehat{R'})$. Let $B$ be the branching sequence of $\mathcal{F}'$ around its unique inner point $x$. We want to see that $B=\widehat{R'}$. Since $\mathcal{F}_x(\mathcal{F}')=\mathcal{F}'$, Lemma~\ref{bra3} yields $\mathcal{F}(B)=\mathcal{F}'$. Therefore, $\mathcal{F}(B)=\mathcal{F}(\widehat{R'})$. Since $\widehat{R'}$ is minimal by definition of a fully reduced sequence and $B$ is minimal by Remark~\ref{florsminimals}, the previous equality and Lemma~\ref{bra0} imply $B=\widehat{R'}$.
\end{proof}

To illustrate Lemma~\ref{2flors}, let $\mathcal{F}$ be the 4-flower shown in Figure~\ref{fig2flors}. The discrete components (equivalently, the 0-branches) of $\mathcal{F}$ are $Z_1=\{0,1,3,5,7,9,11,13,15\}$, $Z_2=\{0,2,6,10,14\}$, $Z_3=\{0,4,12\}$, $Z_4=\{0,8\}$. One can check that the branching sequence of $\mathcal{F}$ around 0 is $R=\{(2,1),(2,2),(2,3),(2,4)\}$. Now let $\mathcal{F}'$ be the opening obtained by joining the discrete components $Z_1$ and $Z_3$. The 0-branches of $\mathcal{F}'$, indexed according to the standing convention, are then $Y_1=Z_1\cup Z_3$, $Y_2=Z_2$, $Y_3=Z_4$. The sequence $R'$ defined in the statement of Lemma~\ref{2flors} is $R'=\{(2,1),(2,2),(2,1),(2,3)\}$, that is minimal. According to Lemma~\ref{2flors}, it is the branching sequence of $\mathcal{F}'$ around 0. As another example, let $\mathcal{F}'$ be the opening of $\mathcal{F}$ obtained by joining the discrete components $Z_2$ and $Z_3$. In this case, the 0-branches of $\mathcal{F}'$ are $Y_1=Z_1$, $Y_2=Z_2\cup Z_3$ and $Y_3=Z_4$. The sequence $R'$ defined in the statement of Lemma~\ref{2flors} reads as $R'=\{(2,1),(2,2),(2,2),(2,3)\}$ and its fully reduced sequence $\{(2,1),(4,2),(2,3)\}$ is the branching sequence of $\mathcal{F}'$ around 0.

\begin{figure}
\centering
\includegraphics[scale=0.65]{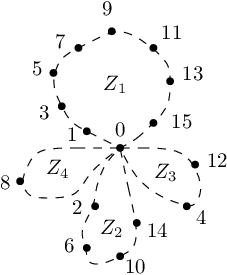}
\caption[fck]{A 16-periodic 4-flower with entropy zero.\label{fig2flors}}
\end{figure}

Now we are in position of proving Theorem~\ref{2inners_3op}.

\begin{proof}[Proof of Theorem~\ref{2inners_3op}]
Let $\mathcal{O}$ and $\mathcal{R}$ be the two openings of $\mathcal{P}$ given by Lemma~\ref{2inners_3op_QR}, let $S=\{(p_i,\delta_i)\}_{i=0}^r$ and $R=\{(q_i,\kappa_i)\}_{i=0}^t$ be the corresponding branching sequences around $x$, and let $\widehat{S}=\{(\widehat{p}_i,\widehat{\delta}_i)\}_{i=0}^{\widehat{r}}$ and $\widehat{R}=\{(\widehat{q}_i,\widehat{\kappa}_i)\}_{i=0}^{\widehat{t}}$ be the sequences fully reduced, respectively, from $S$ and $R$.
From the definition of a reduced sequence,
\begin{equation}\label{equa1}
\widehat{q}_{\widehat{t}}=q_{t-j}q_{t-j+1}\cdots q_{t-1}q_{t}\mbox{ for some }j\ge0.
\end{equation}
On the other hand, since $x$ is bidirectional in $\mathcal{O}$, then, by Remark~\ref{bidi}, $\delta_{r-1}\neq\delta_r$. Therefore, using again the definition of a reduced sequence we get
\begin{equation}\label{equa2}
(\widehat{p}_{\widehat{r}},\widehat{\delta}_{\widehat{r}})=(p_r,\delta_r).
\end{equation}

We claim that $q_t$ divides $p_r$. To prove this claim we will consider the two cases produced by Lemma~\ref{2inners_3op_QR}(c).

Assume first that Lemma~\ref{2inners_3op_QR}(c1) holds. Then, by Corollary~\ref{bra4}, $\widehat{S}$ and $\widehat{R}$ are identical term by term. In particular, $\widehat{q}_{\widehat{t}}=\widehat{p}_{\widehat{r}}$, which is equal to $p_r$ by (\ref{equa2}). Thus, (\ref{equa1}) implies that $q_t$ divides $p_r$, as claimed.

Assume now that Lemma~\ref{2inners_3op_QR}(c2) holds. From Proposition~\ref{newresult} we have that $\widehat{S}$ is the branching sequence of the flower $\mathcal{F}':=\mathcal{F}_x(\mathcal{O})$ and $\widehat{R}$ is the branching sequence of the flower $\mathcal{F}:=\mathcal{F}_x(\mathcal{R})$. Since $\mathcal{F}'$ is an opening of $\mathcal{F}$, Lemma~\ref{2flors} tells us that $\widehat{S}=\{(\widehat{p}_i,\widehat{\delta}_i)\}_{i=0}^{\widehat{r}}$ has been obtained from $\widehat{R}=\{(\widehat{q}_i,\widehat{\kappa}_i)\}_{i=0}^{\widehat{t}}$ in two steps. First, we consider a sequence $\widehat{R}'=\{(\widehat{q}_i,\widehat{\kappa}'_i)\}_{i=0}^{\widehat{t}}$ and then fully reduce it to obtain $\{(\widehat{p}_i,\widehat{\delta}_i)\}_{i=0}^{\widehat{r}}$. Again the definition of a reduction implies that
\[ \widehat{p}_{\widehat{r}}=\widehat{q}_{\widehat{t}-\ell}\widehat{q}_{\widehat{t}-\ell+1}\cdots\widehat{q}_{\widehat{t}-1}\widehat{q}_{\widehat{t}}\mbox{ for some }\ell\ge0. \]
The previous equality and (\ref{equa1}) imply that $q_t$ divides $p_r$ also in this case. In consequence, the claim is proved.

To end up we claim that the divisibility of $p_r$ by $q_t$ implies the $\pi$-reducibility of $\mathcal{P}$. We recall that $p_r$ and $q_t$ are the cardinalities of the trivial blocks in the respective maximal structures of $\mathcal{O}$ and $\mathcal{R}$ given by Proposition~\ref{56-comb}. Relabel if necessary the points of $\mathcal{P}$ in such a way that $x=0$. The inner point $0$ belongs to the block of $\mathcal{O}$
\[
O_0=\{0,\tfrac{n}{p_r},\tfrac{2n}{p_r},\dots,\tfrac{(p_r-1)n}{p_r}\}.
\]
By Proposition~\ref{twocharact}, $\mathcal{O}$ is $\pi$-reducible for any basic path $\pi$ contained in $O_0$. On the other hand, the inner point $0$ belongs to the block of $\mathcal{R}$
\[
R_0=\{0,\tfrac{n}{q_t},\tfrac{2n}{q_t},\dots,\tfrac{(q_t-1)n}{q_t}\}.
\]
Again, $\mathcal{R}$ is $\pi$-reducible for any basic path $\pi$ contained in $R_0$. Since $q_t$ divides $p_r$, the point $\frac{n}{q_t}$ belongs to $O_0\cap R_0$. Take $\pi:=\{0,\frac{n}{q_t}\}$.
Note that $\pi$ is a basic path in $\mathcal{P}$, $\mathcal{Q}$ and $\mathcal{R}$. Moreover, $\pi$ never splits in both $\mathcal{O}$ and $\mathcal{R}$. Since all inner points of $\mathcal{P}$ are inner points either in $\mathcal{O}$ or in $\mathcal{R}$, it follows that $\pi$ never splits in $\mathcal{P}$.
\end{proof}

\section{$k$-Flowers}\label{S8}
Following the sketch of the proof of Theorem~\ref{Tachin} outlined in Section~\ref{S5}, we have to deal now with the special case of patterns with only
one inner point and $k\ge2$ discrete components. When $k=2$, the following result (Theorem~5.2 of \cite{forward}) does the job.

\begin{theorem}\label{fwd}
Let $\mathcal{P}$ be an $n$-periodic pattern with two discrete components. If $h(\mathcal{P})>0$, then $h(\mathcal{P})\ge\log(\lambda_n)$.
\end{theorem}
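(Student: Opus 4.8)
The plan is to compute $h(\mathcal{P})$ combinatorially through the path transition matrix and to exploit the very rigid shape of the $\mathcal{P}$-path graph forced by having only two discrete components. Denote the two components by $A$ and $B$ and the inner point by $x$, and relabel the orbit so that $x=0$. Since $A\cap B=\{0\}$, for any basic path $\{i,j\}$ the arc $\langle f(\{i,j\})\rangle=[i+1,j+1]$ (indices mod $n$) is of exactly one of two kinds: either it is contained in a single discrete component, and then $\{i,j\}$ $f$-covers only the basic path $\{i+1,j+1\}$; or it passes through $0$, and then $\{i,j\}$ $f$-covers exactly the two basic paths $\{i+1,0\}$ and $\{0,j+1\}$. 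Hence every vertex of the $\mathcal{P}$-path graph has out-degree $1$ or $2$, out-degree $2$ occurring precisely when $0$ lies in the open arc $(i+1,j+1)$. By Remark~\ref{patent} it therefore suffices to prove $\rho(M_{\mathcal{P}})\ge\lambda_n$ under the hypothesis $\rho(M_{\mathcal{P}})>1$.

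Assume $\rho(M_{\mathcal{P}})>1$. Then the $\mathcal{P}$-path graph has a strongly connected subgraph $R$ with $\rho(R)>1$; being strongly connected and not a single cycle, $R$ contains a vertex of out-degree $2$, that is, a basic path whose image crosses $0$. I would then follow the forward itinerary of the basic path $\{n-1,0\}$ that feeds into $0$: it runs $\{n-1,0\}\to\{0,1\}\to\{1,2\}\to\cdots$ along a chain of out-degree-one vertices until a consecutive pair of orbit points first falls on opposite sides of $0$, at which point the chain splits into one arrow back toward a radial path $\{m,0\}$ and one arrow toward $\{0,\ell\}$; one keeps bookkeeping of how these re-enter the chain. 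A rome reduction at $\{n-1,0\}$ then yields a Perron equation $\sum_{\gamma}x^{-|\gamma|}=1$, and the aim is to show that at $x=\lambda_n$ the left-hand side is at least the value of the analogous generating function of $\mathcal{Q}_n$, which satisfies $2x^{1-n}+x^{-n}=1$; since the left-hand side is decreasing in $x$, this gives $\rho(M_{\mathcal{P}})\ge\lambda_n$. In practice one wants short enough first-return loops to be present in $R$, the period being exactly $n$ being what caps the relevant lengths.

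The heart of the matter, and where I expect the real difficulty, is to execute this without a convenient normal form for $\mathcal{P}$: the orbit may cross $0$ several times, so the path graph can be genuinely branched. The natural organising device is the side word $w\in\{A,B\}^{n-1}$ recording which component each of $x_1,\dots,x_{n-1}$ belongs to (equivalently, the sequence of excursions of the orbit through $A$ and through $B$), and one would split into cases according to the number and lengths of these excursions. In the ``degenerate'' cases --- those in which every first-return loop at a radial path is too long to witness $\rho\ge\lambda_n$ --- I would argue that $\mathcal{P}$ actually carries a maximal separated structure of trivial blocks, the blocks being read off from the periodicity of the excursion pattern; then either its skeleton is trivial or of zero entropy, so $h(\mathcal{P})=0$ by Proposition~\ref{56}, contradicting $h(\mathcal{P})>0$, or its skeleton has positive entropy, so $h(\mathcal{P})>\log\lambda_n$ already by Corollary~\ref{skeletonok}. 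In the remaining ``non-degenerate'' cases the two short return loops and the long one that reproduce the $\mathcal{Q}_n$ equation are visible directly from the excursion structure. Thus the technical obstacle is precisely to match ``positive entropy'' with ``presence of a $\mathcal{Q}_n$-type sub-configuration in the path graph.'' Once $\rho(M_{\mathcal{P}})\ge\lambda_n$ is established, $h(\mathcal{P})\ge\log\lambda_n$ is immediate by Remark~\ref{patent}; the uniqueness of the minimiser is not needed here and is subsumed by Theorem~\ref{Tachin}.
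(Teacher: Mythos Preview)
The paper does not prove this statement; it imports it verbatim as Theorem~5.2 of \cite{forward}. So there is no in-paper proof to compare against beyond that citation, and your proposal is being measured against an external argument organised around the characterisation of positive entropy via \emph{forward triplets} (cf.\ the use of Theorem~1.1 of \cite{forward} in Lemma~\ref{preimatges}), rather than a direct rome analysis of $M_{\mathcal{P}}$.

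Your structural observation about the $\mathcal{P}$-path graph (every vertex has out-degree $1$ or $2$, splitting precisely when the image straddles the inner point) is correct and matches the splitting bookkeeping used throughout Sections~\ref{S6b}--\ref{S9}. But what you have written is a plan, not a proof. The step you yourself flag as ``the heart of the matter''---that positive entropy forces a $\mathcal{Q}_n$-type loop configuration whose rome equation dominates $2x^{1-n}+x^{-n}=1$---is exactly the content of the theorem, and nothing in the sketch indicates how to carry it out; ``the remaining non-degenerate cases \dots\ are visible directly'' is where all the work is hiding.

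There is also a genuine gap in your degenerate branch. You invoke Corollary~\ref{skeletonok} when the skeleton $\mathcal{S}$ has positive entropy, but that corollary requires $h(\mathcal{S})\ge\log(\lambda_p)$, not merely $h(\mathcal{S})>0$. Obtaining the former is Theorem~\ref{Tachin} for period $p$, which in this paper \emph{uses} Theorem~\ref{fwd}; so as written the argument is circular. It can be repaired by running a joint induction on the period for Theorems~\ref{Tachin} and~\ref{fwd} together, but that must be made explicit, and it would entangle your argument with the full machinery of the paper rather than giving the independent proof that \cite{forward} provides.
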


For $k\ge3$, and in the spirit of the proof by induction outlined in Section~\ref{S5}, we need to relate our pattern of period $n$ with
another pattern with period less than $n$ and positive entropy. So, let $\mathcal{P}=([T,P],[f])$ be an $n$-periodic pattern. A pattern
$\mathcal{P}'$ will be said to be \emph{subordinated to} $\mathcal{P}$ if for some divisor $n>p>1$ of $n$ there is an $(n/p)$-periodic orbit
$P'\subset P$ of $f^p$ such that $\mathcal{P}'=([\chull{P'}_T,P'],[f^p\evalat{P'}])$. Clearly, this definition is independent
of the particular model $(T,P,f)$ representing $\mathcal{P}$.

The following result is Lemma~9.1 of \cite{ajm}. It allows us to estimate the entropy of a pattern from the entropy of a subordinated.

\begin{lemma}\label{subor}
Let $\mathcal{P}$ be an $n$-periodic pattern. Let $\mathcal{P}'$ be an $n'$-periodic pattern subordinated
to $\mathcal{P}$. If $h(\mathcal{P}')\ge\log(\lambda_{n'})$ then $h(\mathcal{P})>\log(\lambda_n)$.
\end{lemma}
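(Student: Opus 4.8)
The plan is to reduce the lemma to a dynamical inequality $p\cdot h(\mathcal P)\ge h(\mathcal P')$, where $p$ is the divisor witnessing the subordination (so $n'=n/p$), together with the elementary estimate $\tfrac1p\log\lambda_{n'}>\log\lambda_n$. Chaining these with the hypothesis $h(\mathcal P')\ge\log\lambda_{n'}$ gives
$h(\mathcal P)\ge\tfrac1p h(\mathcal P')\ge\tfrac1p\log\lambda_{n'}>\log\lambda_n$, which is what is claimed.

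For the dynamical inequality, I would fix a model $(T,P,f)$ of $\mathcal P$ — the canonical one, so that $h(f)=h(\mathcal P)$ — and let $p$ and the period-$n'$ orbit $P'\subset P$ of $g:=f^p$ be as in the definition of subordination. Using the standard identity $h(g)=h(f^p)=p\cdot h(f)$ for the topological entropy of iterates on a compact metric space, it suffices to prove $h(g)\ge h(\mathcal P')$. Now $g$ is a continuous tree map possessing $P'$ as a periodic orbit, and by the very definition of subordination the pattern of that orbit — the combinatorial type of $(\chull{P'}_T,P',g|_{P'})$ — is precisely $\mathcal P'$. Hence, modulo the technical point discussed below, the principle that patterns force entropy (a consequence of the canonical-model machinery of \cite{aglmm}: among all tree maps exhibiting a given periodic pattern the canonical model has the least entropy, cf.\ Theorem~\ref{A-AGLMM} and the surrounding discussion) yields $h(g)\ge h(\mathcal P')$.

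For the numerical estimate: since $p\ge 2$ is a proper divisor of $n$ we have $1\le n/p<n$, so Proposition~\ref{propietats}(a) gives $\lambda_{n/p}>\lambda_n>1$. Both $\lambda_{n/p}$ and $\lambda_n^{\,p}$ lie in $(1,\infty)$, and combining $\lambda_{n/p}>\lambda_n$ with the defining relation $\lambda_m^{\,m}=2\lambda_m+1$ gives
\[
\lambda_{n/p}^{\,n/p}=2\lambda_{n/p}+1>2\lambda_n+1=\lambda_n^{\,n}=\bigl(\lambda_n^{\,p}\bigr)^{n/p};
\]
since $t\mapsto t^{n/p}$ is strictly increasing on $(1,\infty)$ this forces $\lambda_{n/p}>\lambda_n^{\,p}$, i.e.\ $\tfrac1p\log\lambda_{n/p}>\log\lambda_n$. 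Together with the previous paragraph this completes the argument.

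The main obstacle is the step $h(f^p)\ge h(\mathcal P')$: the subtree $\chull{P'}_T$ need not be $f^p$-invariant (and $P'$ is only a proper subset of the endpoint set of $T$), so $f^p$ cannot simply be restricted to it, and $g$ on $T$ is not literally a model of $\mathcal P'$. Making this rigorous requires the monotone/canonical-model technology of \cite{aglmm} (Theorems~A and~B): one passes from $g=f^p$ to an honest monotone model of $\mathcal P'$ without increasing entropy — for instance by retracting $g$ onto $\chull{P'}_T$ and then straightening — after which the entropy of that model is $\ge h(\mathcal P')$ directly from the definition of pattern entropy as an infimum over models. The remaining steps are routine.
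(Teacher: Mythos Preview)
The paper does not prove this lemma; it is merely quoted as Lemma~9.1 of \cite{ajm}. Your outline is correct and matches the standard approach. The two ingredients are exactly as you identify: the dynamical inequality $p\cdot h(\mathcal P)\ge h(\mathcal P')$ (from $h(f^p)=p\,h(f)$ on the canonical model together with the fact that $f^p$ exhibits $\mathcal P'$), and the arithmetic inequality $\tfrac{1}{p}\log\lambda_{n/p}>\log\lambda_n$. Your proof of the latter, via the observation that $\lambda_k^k=2\lambda_k+1$ is strictly decreasing in $k$, is precisely the computation that appears later in this very paper in the proof of Lemma~\ref{minim}(a). The technical point you flag for the former---that $\langle P'\rangle_T$ need not be $f^p$-invariant, so one must pass through a retraction onto $\langle P'\rangle_T$ to obtain an honest model of $\mathcal P'$ without increasing entropy---is real and is exactly what the argument in \cite{ajm} takes care of; your sketch of how to close it is accurate.
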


A discrete component of a pattern will be said to be \emph{extremal} if it contains only one inner point. As an example, the discrete
components $A$, $B$ and $D$ are extremal for the pattern $\mathcal{P}$ shown in Figure~\ref{brutty2}.

Let $(T,P,f)$ be a model of a periodic pattern $\mathcal{P}$. Let $C$ be a discrete component of $(T,P)$.
We will say that a point $x\in C$ \emph{escapes from $C$} if $f(x)$ does not belong to the connected
component of $T\setminus \{x\}$ that intersects $\Int(\chull{C})$. Any discrete component $C$ of $(T,P)$ without
points escaping from it will be called a \emph{scrambled component} of $\mathcal{P}$. Clearly, this notion does
not depend on the particular chosen model of $\mathcal{P}$. So, it makes sense to say that the pattern
$\mathcal{P}$ \emph{has a scrambled component}. As an example, the point 7 escapes from $\{1,7,13\}$ in the
18-periodic pattern $\mathcal{P}_2$ shown in Figure~\ref{ullh0}, while does not scape from $C:=\{0,3,5,7,9,11,15,17\}$.
In fact, no point in $C$ escapes from $C$. So, $C$ is a scrambled component for $\mathcal{P}_2$. It is
easy to see that every periodic pattern has scrambled components (Lemma~4.2 of \cite{ajm}).

\begin{theorem}\label{1innerprevi}
Let $\mathcal{P}$ be an $n$-periodic pattern with positive entropy and at least three discrete components. Assume that
any opening of $\mathcal{P}$ has entropy zero. If $\mathcal{P}$ has an extremal scrambled component, then $\mathcal{P}$ has
subordinated patterns with positive entropy.
\end{theorem}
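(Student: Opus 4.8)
The plan is to exploit the extremal scrambled component $C$ directly: since $C$ is extremal, it contains a unique inner point, say $x$, and all other points of $C$ are endpoints of $\mathcal{P}$; since $C$ is scrambled, no point of $C$ escapes from $C$. The strategy is to take the canonical model $(T,P,f)$ of $\mathcal{P}$, let $p$ be the period of $x$ under $f$ — which is a strict divisor of $n$ since $x$ lies in a single periodic orbit but $|C|<n$ — wait, more carefully: $x$ is one of the $n$ points of the orbit $P$, so the relevant divisor arises differently. Instead, I would argue as follows. The component $C$ has $m:=|C|\ge 2$ points; since it is extremal, $C\setminus\{x\}\subset\En(\mathcal{P})$. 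Consider the first return of points of $C$ to the connected hull $\langle C\rangle$. Because $C$ is scrambled, the images $f(y)$ for $y\in C$ stay on the side of $y$ facing $\langle C\rangle$; iterating, the orbit of each point of $C$ eventually returns to $\langle C\rangle$, and the first-return map $g$ of $f$ to $\langle C\rangle_T$ is well-defined on a subset of $P\cap\langle C\rangle_T=C$. The key point is that $g$ permutes $C$ cyclically (as $C$ is a full set of consecutive points meeting a single orbit and scrambling prevents loss), so $(\,\langle C\rangle_T,\,C,\,g)$ is an $m$-periodic model, hence realizes an $m$-periodic pattern $\mathcal{P}'$; and one checks the return time is a constant $p=n/m>1$, making $\mathcal{P}'$ subordinated to $\mathcal{P}$.

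The next step is to show $h(\mathcal{P}')>0$. Here I would use the hypothesis that every opening of $\mathcal{P}$ has entropy zero together with $h(\mathcal{P})>0$. If $\mathcal{P}'$ had zero entropy, I claim one could reconstruct $\mathcal{P}$, up to openings at the inner points outside $C$, from $\mathcal{P}'$ and the zero-entropy behaviour elsewhere, forcing $h(\mathcal{P})=0$ by the recursive characterization in Proposition~\ref{56} combined with Theorem~\ref{5.3} — a contradiction. More precisely: the entropy of the canonical model is carried by the $\mathcal{P}$-path graph; every loop producing growth must, after the opening-reduction to property ($\star$) already assumed, localize inside a scrambled component (this is the standard ``entropy lives in a scrambled piece'' phenomenon, cf. Lemma~4.2 of \cite{ajm} and the surrounding machinery), and the only scrambled component in play once all openings are trivial is one whose first-return dynamics is exactly $\mathcal{P}'$. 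Hence $\rho(M_{\mathcal{P}})>1$ forces $\rho(M_{\mathcal{P}'})>1$, i.e. $h(\mathcal{P}')>0$. Then $\mathcal{P}'$ is the desired subordinated pattern with positive entropy and period $n'=m<n$.

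The main obstacle, I expect, will be the second step: cleanly transferring positive entropy from $\mathcal{P}$ to the first-return pattern $\mathcal{P}'$. The subtlety is that positive entropy of $\mathcal{P}$ could a priori be ``spread'' across several discrete components via long basic paths that traverse the inner point $x$, rather than being confined to $C$; ruling this out is exactly where one needs that all openings of $\mathcal{P}$ have entropy zero, so that any basic path leaving $C$ through $x$ splits and its growth is absorbed into a zero-entropy piece. Making this precise requires a careful loop-counting argument in the $\mathcal{P}$-path graph, partitioning loops by whether they stay inside $\langle C\rangle_T$ or cross $x$, and showing the crossing loops contribute no spectral radius beyond $1$ under property ($\star$). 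Once that partition is established, the inequality $\rho(M_{\mathcal{P}})\le\max\{\rho(M_{\mathcal{P}'}),1\}$ is essentially immediate, and the theorem follows. The remaining verifications — that the first-return map is $P$-monotone on basic paths of $C$ so that $\mathcal{P}'$ is genuinely realized, and that the return time is constant — are routine given the scrambled hypothesis and the structure of the canonical model, so I would not dwell on them.
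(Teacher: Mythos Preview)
Your proposal has a fatal flaw in the very construction of the subordinated pattern. You build $\mathcal{P}'$ as the first-return pattern on the extremal scrambled component $C$ itself, i.e.\ $\mathcal{P}'=([\langle C\rangle_T,C],[g])$. But $C$ is by definition a single discrete component of $\mathcal{P}$, so $\langle C\rangle_T$ is a star and $\mathcal{P}'$ is a \emph{trivial} pattern. Hence $h(\mathcal{P}')=0$ automatically, and no amount of loop-counting in the $\mathcal{P}$-path graph can change that. Your entire second step, arguing that ``entropy localizes in the scrambled piece'', is therefore aimed at proving something false.

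The paper's argument is essentially dual to yours: it takes the subordinated pattern on the \emph{complement} of $C$. Concretely, one first performs openings joining all discrete components other than $C$ into a single component $D$, obtaining a two-component pattern with entropy zero (by hypothesis). This zero-entropy two-component pattern has a \emph{division}: there is a $p\ge 2$ with $f^i(D)\subset C$ for $1\le i<p$ and $f^p(D)=D$, so $P_0:=D,\,P_1:=f(D),\ldots,P_{p-1}:=f^{p-1}(D)$ is a $p$-block structure for $\mathcal{P}$ in which all blocks except $P_0$ are trivial (being inside $C$). The subordinated pattern is then $\mathcal{Q}=([\langle P_0\rangle_T,P_0],[f^p])$. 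Since $\mathcal{P}$ has at least three discrete components and only $C$ has been excluded, $P_0$ spans at least two of them, so $\mathcal{Q}$ is not forced to be trivial; and if $h(\mathcal{Q})=0$ one immediately gets $h(\mathcal{P})=0$ from the block structure with all-but-one trivial blocks, a contradiction. Note also that this route gives the constant $p$ for free via the division, whereas your claim that the return time to $C$ is constant $n/|C|$ is not justified by the scrambled hypothesis alone---it is precisely the zero-entropy property of the opening that forces the uniform block structure.
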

\begin{proof}
Let $(T,P,f)$ be a model of $\mathcal{P}$ and let $C$ be the extremal scrambled component of $\mathcal{P}$.
Then, there is only one inner point $x$ in $C$, and $f(x)\in C$ by definition of a scrambled component. Consider a sequence of openings
that joins together all discrete components different from $C$ into a single discrete component $D$, leading to a pattern $\mathcal{P}'$
with two discrete components, $C$ and $D$. Since $h(\mathcal{P'})=0$ by hypothesis, $\mathcal{P'}$ has a \emph{division} \cite{ajm}
with respect to $C$. In consequence, there exists $p\ge2$, a divisor of $n$, such that $f^i(D)\subset C$ for $1\le i<p$ and $f^p(D)=D$.
In other words, $\cup_{i=0}^{p-1} P_i$ is a $p$-block structure for $\mathcal{P}$, where $P_i:=f^i(D)$. Note that the blocks
$P_1,P_2,\ldots P_{p-1}$ are contained in $C$ and are, thus, trivial. Consider the pattern $\mathcal{Q}:=([\chull{P_0}_T,P_0],[f^p\evalat{P_0}])$.
Then, $\mathcal{Q}$ is subordinated to $\mathcal{P}$. Moreover, its entropy is positive, for otherwise the fact that all blocks but one
are trivial would easily imply that $h(\mathcal{P})=0$.
\end{proof}

When a pattern has only one inner point $x$, the discrete component containing the image of $x$ is clearly scrambled and extremal. So,
we have the next result as an immediate consequence of Theorem~\ref{1innerprevi}.

\begin{corollary}\label{1inner}
Let $\mathcal{P}$ be a positive entropy $k$-flower, with $k\ge3$. Assume that any opening of $\mathcal{P}$ has entropy zero.
Then, $\mathcal{P}$ has subordinated patterns with positive entropy.
\end{corollary}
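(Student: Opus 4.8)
The plan is to obtain the corollary as a direct application of Theorem~\ref{1innerprevi}, so the whole task reduces to checking its hypotheses for $\mathcal{P}$. Three of them are free: $\mathcal{P}$ has positive entropy and every opening of $\mathcal{P}$ has entropy zero by assumption, and $\mathcal{P}$ has at least three discrete components because it is a $k$-flower with $k\ge 3$ (its $k$ petals). Hence the only point that needs an argument is that $\mathcal{P}$ has an \emph{extremal scrambled component}; granting that, Theorem~\ref{1innerprevi} immediately provides subordinated patterns with positive entropy.

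To produce such a component, let $x$ be the unique inner point of $\mathcal{P}$ (the center of the flower). Since the period $n\ge 3$, we have $f(x)\ne x$ in any model, so $f(x)$ lies in a well-defined petal $C$, which then contains the two distinct points $x$ and $f(x)$. I would first note that $C$ is extremal: any point of $\mathcal{P}$ other than $x$ has combinatorial valence $1$ --- otherwise it would be a second inner point --- so $x$ is the only inner point lying in $C$. Next I would check that $C$ is scrambled, i.e.\ that no point of $C$ escapes from $C$. Fix any model $(T,P,f)$ of $\mathcal{P}$; because $\mathcal{P}$ is a flower, $T$ is the union of the connected hulls of its petals, these hulls meet pairwise only at $x$, and $x$ is an endpoint of each of them. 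For $y\in C$ with $y\ne x$, the point $y$ is an endpoint of $\mathcal{P}$, hence a point of valence $1$ of $T$ (by the identification of combinatorial and topological valence recalled at the start of Section~\ref{S6}); thus $T\setminus\{y\}$ is connected and trivially contains $f(y)$, so $y$ does not escape from $C$. For $y=x$, the component of $T\setminus\{x\}$ meeting $\Int(\chull{C})$ is precisely $\chull{C}\setminus\{x\}$, and $f(x)\in C\subseteq\chull{C}$ with $f(x)\ne x$, so $f(x)$ lies in that component and $x$ does not escape from $C$ either. Therefore $C$ is scrambled.

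The argument is really just bookkeeping, so there is no serious obstacle; the only point that must be gotten right is the choice of $C$. Taking $C$ to be the petal that contains $f(x)$, rather than an arbitrary petal, is exactly what makes the delicate case $y=x$ go through, while the remark that every non-central point of a flower is a genuine endpoint of the tree --- so that deleting it keeps $T$ connected --- disposes of all the other points at once. With $C$ both extremal and scrambled, the hypotheses of Theorem~\ref{1innerprevi} are met and the corollary follows.
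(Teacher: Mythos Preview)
Your proof is correct and follows the same approach as the paper: the paper simply remarks that when a pattern has a unique inner point $x$, the discrete component containing $f(x)$ is ``clearly scrambled and extremal'', and then invokes Theorem~\ref{1innerprevi}. You supply the details the paper omits --- that every non-central point of a flower is a genuine endpoint of $T$ and hence cannot escape, and that the component of $T\setminus\{x\}$ meeting $\Int(\chull{C})$ contains $f(x)$ --- but the underlying idea is identical.
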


\section{Triple chains}\label{S9}
The final stage in the proof of Theorem~\ref{Tachin} outlined in Section~\ref{S5} leaves us with the special case of a pattern
with exactly two inner points and three discrete components, a \emph{triple chain}. In order to find lower bounds for the entropy of
a triple chain $\mathcal{P}$, it is unavoidable to count coverings in the $\mathcal{P}$-path graph (equivalently, entries in the
path transition matrix). This section is devoted to this task. In our context, it is assumed that $\mathcal{P}$ is $\pi$-irreducible and
any of the two possible openings of $\mathcal{P}$ has entropy zero (property~(\ref{opening0})). Note that any opening of a triple chain has
two discrete components. So, to obtain a lower bound of the entropy of $\mathcal{P}$ we will proceed in two steps. First, we
will study the coverings in the path graph of zero entropy patterns with two discrete components. This is the aim of Lemmas~\ref{bifurcacions}
and \ref{cover-k}. Finally, we will study how the previous coverings, present in the two possible openings of the triple chain $\mathcal{P}$,
imply the existence of a number of coverings in the $\mathcal{P}$-path graph (Lemma~\ref{cover4}) that forces enough entropy for our purposes.

The results mentioned in the previous scheme are extremely technical. Readers are cautioned to follow the arguments using examples,
as the ones shown in the figures.

A basic path $\pi$ for a pattern with a separated structure of trivial blocks will be said to be \emph{in-block}
if it is contained in a block. Otherwise, it will be said to be \emph{inter-block}. As an example, $\{1,13\}$ is
an in-block basic path of $\mathcal{P}_2$ in Figure~\ref{ullh0}, while $\{0,15\}$ is inter-block.
The second statement of Proposition~\ref{twocharact} says that an in-block path never \emph{splits} (as defined in page~\pageref{pagesplit}).
On the other hand, next result states that inter-block basic paths do always split.

\begin{lemma}\label{inter-blocks}
Let $\mathcal{P}$ an $n$-periodic pattern with a separated structure of trivial blocks. Then any inter-block basic path of $\mathcal{P}$ splits before $n$ iterates.
\end{lemma}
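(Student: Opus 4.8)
The plan is to argue by contradiction, using the characterization of $\pi$-reducibility and the relationship between "never splitting" and the block structure. Let $P_0\cup P_1\cup\ldots\cup P_{p-1}$ be a separated structure of trivial blocks for $\mathcal{P}$ (it need not be the maximal one), and let $\pi=\{a,b\}$ be an inter-block basic path, say $a\in P_i$ and $b\in P_j$ with $i\ne j$. Suppose, for contradiction, that $\pi$ does not split before $n$ iterates. Since $\pi$ is $n$-periodic under $f$ (because $P$ is an $n$-periodic orbit, so $f^n$ fixes both endpoints), this would force $f^k(\pi)$ to be a basic path for \emph{every} $k\ge 0$, i.e. $\mathcal{P}$ would be $\pi$-reducible in the sense of the definition on page~\pageref{pagesplit}.

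First I would invoke Proposition~\ref{twocharact}: if $\mathcal{P}$ is $\pi$-reducible for the basic path $\pi$, then $\mathcal{P}$ has a \emph{maximal} separated structure of trivial blocks, and moreover $\mathcal{P}$ is $\sigma$-reducible for \emph{every} basic path $\sigma$ contained in a block of that maximal structure. The key point is then to locate $\pi$ relative to this maximal structure. Because $\pi$ never splits, each iterate $f^k(\pi)$ stays a basic path; tracking the endpoints, the block of the maximal structure containing $a$ and the block containing $b$ get permuted by $f$ in lockstep, and since $\{a,b\}$ always lies in a single discrete component, the two endpoints must in fact always lie in the \emph{same} block of the maximal structure — otherwise at the iterate where the two blocks first "collide" at an inner point the path would $f$-cover at least two basic paths and hence split. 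So $\pi$ is an in-block path of the maximal structure.

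Now the contradiction: by definition of a block structure, $\chull{P_i}_T\cap P_j=\emptyset$ for $i\ne j$, so each block of the given (coarser) separated structure $P_0\cup\ldots\cup P_{p-1}$ is a union of blocks of the maximal structure. An in-block path of the maximal structure is therefore also an in-block path of the coarser structure — contradicting the hypothesis that $\pi$ is inter-block for $P_0\cup\ldots\cup P_{p-1}$. Hence $\pi$ must split, and since $f^n$ fixes the endpoints of $\pi$ the splitting occurs within $n$ iterates.

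The step I expect to be the main obstacle is making rigorous the claim that a never-splitting inter-block path forces both endpoints into a common block of the maximal structure. The clean way is to avoid tracking "collisions" by hand and instead argue purely from Proposition~\ref{twocharact}: $\pi$-reducibility gives a maximal separated trivial block structure $P'_0\cup\ldots\cup P'_{p'-1}$, and one shows $\pi$ must be in-block for it, because if $a\in P'_s$ and $b\in P'_{s'}$ with $s\ne s'$ then $\chull{\{a,b\}}$ meets $P'_s,P'_{s'}$ and, by separability, passes through the (distinct) convex hulls of other blocks as well, so some forward iterate of $\pi$ sits across an inner point of the maximal structure and thus $f$-covers two basic paths — i.e. $\pi$ splits after all, contradicting $\pi$-reducibility. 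Once $\pi$ is seen to be in-block for the maximal structure, the final containment argument is immediate from the definition of a block structure.
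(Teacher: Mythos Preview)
Your overall strategy---contradiction via $\pi$-reducibility---matches the paper's. The paper also observes that if $\pi$ does not split in $n$ iterates then (by $n$-periodicity) $\mathcal{P}$ is $\pi$-reducible, and then argues that any never-splitting basic path must be in-block. However, the paper's justification of this last step is different and rests on an external input you do not invoke: the explicit construction in Proposition~9.5 of \cite{ajm}, according to which the blocks of the maximal separated trivial structure are exactly the sets of endpoints of the connected components of $\bigcup_{k\ge0}\chull{f^k(\sigma)}$ for any never-splitting $\sigma$. In particular $\sigma$ lies in one such component, hence in one block, and by uniqueness of the maximal structure this holds for every never-splitting basic path.

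Your attempt at step 5 does not stand on its own. The claim that $[a,b]$ ``passes through the convex hulls of other blocks'' is unjustified: $\pi$ is a basic path, so $(a,b)\cap P=\emptyset$, and nothing forces the open arc to meet any $\chull{P'_r}$. The follow-up assertion that ``some forward iterate of $\pi$ sits across an inner point and thus $f$-covers two basic paths'' is precisely the statement that an inter-block path splits---which is what you are trying to prove---so the argument is circular. You need either the construction from \cite{ajm} or an equivalent fact to pin $\pi$ inside a maximal block.

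Step 6 also has the refinement backwards. By the paper's definition the \emph{maximal} trivial structure is the one with blocks of largest cardinality, hence it is the coarser partition; so ``in-block for the maximal structure'' does not, in the direction you use it, imply ``in-block for the given structure''. (One can in fact show---again via the construction in \cite{ajm}---that any separated trivial block structure coincides with the maximal one, so the issue evaporates; but your argument asserts the opposite containment and does not establish this coincidence.)
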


\begin{proof}
The proof strongly relies on the construction of the maximal separated structure of trivial blocks in Proposition 9.5 of \cite{ajm} and its uniqueness (see Section~\ref{S3}).
The construction shows that if $\mathcal{P}$ is $\sigma$-reducible for a basic path $\sigma$, then each trivial block is obtained as the set of endpoints of a connected component
of $\cup_{i\ge0} \chull{f^i(\sigma)}$. In particular, $\sigma$ is contained in one block and is thus an in-block basic path. The uniqueness of the maximal structure of trivial
blocks implies that the same is true for any basic path $\sigma'\ne\sigma$ such that $\mathcal{P}$ is $\sigma'$-reducible. Now we note that if $\pi$ does not split in $n$ iterates, then
$\mathcal{P}$ is $\pi$-reducible. Then, by the previous discussion, $\pi$ has to be in-block, a contradiction.
\end{proof}

Let $\mathcal{P}$ be a non-trivial $n$-periodic pattern with entropy zero. By Proposition~\ref{56-comb}, $\mathcal{P}$ has a maximal structure of
trivial blocks and the corresponding combinatorial collapse $\mathcal{C}$ has entropy zero. Let $x$ be a point of $\mathcal{P}$. The point of
$\mathcal{C}$ corresponding to the collapse of the block containing $x$ will be denoted by $\overline{x}$, and this will be a standing notation
throughout this section. In fact, if $x$ is contained in the block $P_i$, then $\overline{x}$ is precisely the point of $\mathcal{C}$ labeled as $i$.
Let $\pi=\{x,y\}$ be an inter-block basic path of $\mathcal{P}$. Then, $\overline{x}\ne\overline{y}$. The binary set $\{\overline{x},\overline{y}\}$
will be denoted by $\overline{\pi}$. Note that, by property (b) of Definition~\ref{combicolla}, $\overline{\pi}$ is a basic path in $\mathcal{C}$.
As an example, consider the pattern $\mathcal{P}_2$ shown in Figure~\ref{ullh0}. The basic paths $\pi_1=\{11,8\}$ and $\pi_2=\{0,7\}$ are inter-block.
In this case, $\overline{\pi}_1=\{5,2\}$ and $\overline{\pi}_2=\{0,1\}$ are (respectively, in-block and inter-block) basic paths of the combinatorial
collapse $\mathcal{P}_1$.

The notation $\mathcal{O}$ for patterns of entropy zero used in the statements of this section suggests, as in Section~\ref{S7}, the term \emph{opening}.

\begin{lemma}\label{herencia}
Let $\mathcal{O}$ be a zero entropy periodic pattern and let $\mathcal{C}$ be the combinatorial collapse of $\mathcal{O}$.
Let $\pi$ be an inter-block basic path of $\mathcal{O}$. If $\overline{\pi}$ splits in $\ell$ iterates on $\mathcal{C}$, then $\pi$
splits in at most $\ell$ iterates on $\mathcal{O}$.
\end{lemma}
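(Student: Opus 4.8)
The plan is to show that the splitting of a lifted path $\pi$ in $\mathcal{O}$ is controlled step by step by the splitting of its projection $\overline{\pi}$ in the combinatorial collapse $\mathcal{C}$. Write $\mathcal{O}=([T,P],[f])$ with maximal separated structure of trivial blocks $P_0\cup P_1\cup\ldots\cup P_{p-1}$, and let $\mathcal{C}=([S,Q],[g])$ be the combinatorial collapse, so that each point $i$ of $\mathcal{C}$ is the collapse of the block $P_i$ and $g(i)=j$ iff $f(P_i)=P_j$ (Definition~\ref{combicolla}). For a point $x$ in block $P_i$ we write $\overline{x}=i\in\mathcal{C}$. Suppose $\pi=\{x,y\}$ is inter-block, so $\overline{x}\ne\overline{y}$ and $\overline{\pi}=\{\overline{x},\overline{y}\}$ is a basic path of $\mathcal{C}$. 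Since $f(P_i)=P_{g(i)}$, we have $\overline{f(z)}=g(\overline{z})$ for every point $z$; in particular the projection commutes with the dynamics, so $\overline{f^k(\pi)}=g^k(\overline{\pi})$ for all $k$, provided $f^k(\pi)$ remains inter-block (equivalently, provided $g^k(\overline{\pi})$ is still a genuine basic path of $\mathcal{C}$).

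The key step is the following assertion, which I would prove by induction on $k$: for $0\le k<\ell$, the path $f^k(\pi)=\{f^k(x),f^k(y)\}$ is an inter-block basic path of $\mathcal{O}$ whose projection is $g^k(\overline{\pi})$. The base case $k=0$ is the hypothesis. For the inductive step, assume $f^k(\pi)$ is an inter-block basic path of $\mathcal{O}$ projecting to the basic path $g^k(\overline{\pi})$ of $\mathcal{C}$, and that $k+1<\ell$, so $g^{k+1}(\overline{\pi})=g(g^k(\overline{\pi}))$ is still a basic path of $\mathcal{C}$. I must show $f^{k+1}(\pi)=f(f^k(\pi))$ is an inter-block basic path of $\mathcal{O}$. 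First, its endpoints $f^{k+1}(x)$ and $f^{k+1}(y)$ lie in distinct blocks, because their projections $g^{k+1}(\overline{x})$ and $g^{k+1}(\overline{y})$ are the two distinct endpoints of the basic path $g^{k+1}(\overline{\pi})$ of $\mathcal{C}$; hence $f^{k+1}(\pi)$ is automatically inter-block once we know it is basic. To see it is basic, suppose to the contrary that some third point $w\in P$ lies in the interior of $\chull{f^{k+1}(\pi)}_T$. If $\overline{w}$ differed from both endpoints of $g^{k+1}(\overline{\pi})$, then by property (b) of Definition~\ref{combicolla} (relating discrete components of $\mathcal{O}$ meeting two blocks to discrete components of $\mathcal{C}$ containing the corresponding points) the point $\overline{w}$ would lie strictly between the endpoints of $g^{k+1}(\overline{\pi})$ in $S$, contradicting that $g^{k+1}(\overline{\pi})$ is a basic path of $\mathcal{C}$. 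Hence $\overline{w}$ coincides with $\overline{f^{k+1}(x)}$ or $\overline{f^{k+1}(y)}$, say with $\overline{f^{k+1}(x)}$; then $w$ and $f^{k+1}(x)$ lie in the same block $P_j$, and $\{w,f^{k+1}(x)\}$ together with $f^{k+1}(y)$ would exhibit a discrete component of $\mathcal{O}$ meeting blocks $P_j$ and the block of $f^{k+1}(y)$ while also meeting $P_j$ at two points — one checks via the triviality of the block $P_j$ and the separatedness of the structure that this forces $f^{k+1}(\pi)$ to already be inter-block-basic with only its two intended endpoints in $\chull{f^{k+1}(\pi)}\cap P$, completing the induction.

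Granting the assertion, the conclusion follows quickly. Since $\overline{\pi}$ splits in exactly $\ell$ iterates on $\mathcal{C}$, the path $g^{\ell-1}(\overline{\pi})$ is basic and $g$-covers at least two distinct basic paths of $\mathcal{C}$, i.e. $\chull{g(g^{\ell-1}(\overline{\pi}))}_S$ contains at least two basic paths. By the assertion with $k=\ell-1$, $f^{\ell-1}(\pi)$ is an inter-block basic path of $\mathcal{O}$ projecting to $g^{\ell-1}(\overline{\pi})$. Now the key point: the collapse map $\kappa\colon T\to S$ sends $\chull{f(f^{\ell-1}(\pi))}_T$ onto $\chull{g(g^{\ell-1}(\overline{\pi}))}_S$, and each basic path of $\mathcal{O}$ contained in $\chull{f^{\ell}(\pi)}_T$ projects either to a basic path of $\mathcal{C}$ inside $\chull{g^{\ell}(\overline{\pi})}_S$ or to a single point (if it is in-block). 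Because $\chull{g^{\ell}(\overline{\pi})}_S$ already contains two distinct basic paths of $\mathcal{C}$, their $\kappa$-preimages inside $\chull{f^{\ell}(\pi)}_T$ must contain two distinct basic paths of $\mathcal{O}$; hence $f^{\ell-1}(\pi)$ already $f$-covers at least two distinct basic paths of $\mathcal{O}$, which means $\pi$ splits in at most $\ell$ iterates. The main obstacle I expect is the careful bookkeeping in the inductive step showing that no spurious third point creeps into $\chull{f^{k+1}(\pi)}$: this is exactly where one must use the triviality of the blocks, the separatedness of the structure, and property (b) of the combinatorial collapse in tandem, rather than any one of them alone.
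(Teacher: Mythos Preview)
Your inductive assertion --- that $f^k(\pi)$ remains an inter-block basic path of $\mathcal{O}$ for every $0\le k<\ell$ --- is too strong, and in fact false in general. Nothing prevents $\pi$ from splitting in $\mathcal{O}$ strictly before $\overline{\pi}$ splits in $\mathcal{C}$. Concretely, suppose $f^{k+1}(\pi)=\{a,b\}$ with $a\in P_i$, $b\in P_j$, and suppose there is an inner point $w\in P_i$ lying strictly between $a$ and $b$ in $T$. Since blocks are trivial, $w$ and $a$ lie in a common discrete component $D_1$, while $b$ lies in some other discrete component $D_2$ adjacent to $D_1$ at $w$. Then $\{a,b\}$ is \emph{not} a basic path of $\mathcal{O}$, yet $D_2$ meets both $P_i$ (at $w$) and $P_j$ (at $b$), so by property~(b) of Definition~\ref{combicolla} there is a discrete component of $\mathcal{C}$ containing both $\overline{a}=i$ and $\overline{b}=j$; hence $\{\overline{a},\overline{b}\}$ \emph{is} still basic in $\mathcal{C}$. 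This is exactly the case your hand-wavy second subcase (``one checks via the triviality of the block $P_j$ and the separatedness\ldots'') purports to rule out, but cannot.

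The fix is simple and is what the paper does: rather than trying to prove the induction, argue by cases. If $\pi$ splits before step $\ell$ you are done immediately (the conclusion is ``at most $\ell$''). Otherwise assume $f^i(\pi)$ is basic for $0\le i<\ell$, set $\{a,b\}:=f^\ell(\pi)$, and observe that since $\overline{\pi}$ splits at step $\ell$, the points $\overline{a},\overline{b}$ lie in no common discrete component of $\mathcal{C}$. Now property~(b) of Definition~\ref{combicolla} gives the contrapositive you need: if $\{a,b\}$ were contained in a single discrete component $D$ of $\mathcal{O}$, then $D$ would meet both blocks $P_{\overline{a}}$ and $P_{\overline{b}}$, forcing $\overline{a}$ and $\overline{b}$ into a common discrete component of $\mathcal{C}$, a contradiction. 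That is the whole argument; your final paragraph invoking a topological collapse map $\kappa\colon T\to S$ is both unnecessary and problematic, since the combinatorial collapse is defined purely via Definition~\ref{combicolla} and in general is \emph{not} realized by a continuous quotient of the canonical model (that role is played by the skeleton, which can differ from the combinatorial collapse).
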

\begin{proof}
Consider any model $(T,P,f)$ of $\mathcal{O}$ and assume that $f^i(\pi)$ is a basic path for $0\le i<\ell$. From the definition of a block
structure it follows that, for all $0\le i<\ell$, the basic path $f^i(\pi)$ is inter-block in $\mathcal{O}$. Set $\{a,b\}:=f^\ell(\pi)$. By hypothesis,
$\overline{f^i(\pi)}$ is a basic path in $\mathcal{C}$ for $0\le i<\ell$, while $\overline{a}$ and $\overline{b}$ are separated by at least one inner point in $\mathcal{C}$.
We have to see that $a$ and $b$ are also separated in $\mathcal{O}$. Assume, by way of contradiction, that there exists a discrete component $D$ of
$\mathcal{O}$ containing $\{a,b\}$. In particular, the trivial blocks $K_a$ and $K_b$ in $\mathcal{O}$ whose collapse gives respectively the points
$\overline{a}$ and $\overline{b}$ of $\mathcal{C}$ satisfy $K_a\cap D\ne\emptyset$ and $K_b\cap D\ne\emptyset$. By definition of the combinatorial
collapse, this implies that there exists a single discrete component of $\mathcal{C}$ containing $\{\overline{a},\overline{b}\}$, a contradiction.
\end{proof}

Given a pattern $\mathcal{P}=([T,P],[f])$ and two basic paths $\pi$ and $\sigma$ of $\mathcal{P}$, we will say that $\pi$ is a \emph{strict pre-image of} $\sigma$
if there exists $j\ge1$ such that $f^i(\pi)$ is a basic path for $0\le i\le j$ and $f^j(\pi)=\sigma$. Note that, in this case, $f^i(\pi)$ are also strict
pre-images of $\sigma$ for $1\le i<j$.

The following result computes the number of iterations necessary for an inter-block basic path to split in a zero entropy pattern with two discrete components.
At this point we recover the notation introduced in Section~\ref{S2} and write $\{a,b\}\rightarrow \{c,d\}$ to indicate that the basic path $\{a,b\}$ $f$-covers the basic path $\{c,d\}$.

\begin{proposition}\label{bifurcacions}
Let $\mathcal{O}=([T,P],[f])$ be a zero entropy $n$-periodic pattern with two discrete components and a maximal structure of trivial blocks of cardinality $q$.
Let $\mathcal{C}$ be the corresponding combinatorial collapse. Assume that $\mathcal{O}$ is labeled in such a way that 0 is the unique inner point.
Let $\pi$ be an inter-block basic path $\pi$ of $\mathcal{O}$. Then,
\begin{enumerate}[(i)]
\item either splits in at most $\frac{n}{q}$ iterates,
\item or it is an strict pre-image of a basic path $\sigma=\{0,a+\frac{q-1}{q}n\}$ with $0<a<\frac{n}{q}$. In this case, $\overline{\sigma}$ is in-block in $\mathcal{C}$
and $\pi$ splits in at most $\frac{2n}{q}$ iterates.
\end{enumerate}
If in addition $\overline{\pi}$ is in-block in $\mathcal{C}$, then the following statements hold:
\begin{enumerate}[(a)]
\item If $\pi=\{0,a\}$ with $0<a<\frac{n}{q}$, then $\pi$ splits in $\frac{n}{q}-a$ iterates.
\item If $\pi=\{0,a+\tfrac{q-1}{q}n\}$ with $0<a<\frac{n}{q}$, then $\pi$ splits in $\frac{n}{q}$ iterates.
\end{enumerate}
\end{proposition}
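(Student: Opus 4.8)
The plan is to analyze the forward orbit of an inter-block basic path $\pi=\{x,y\}$ by tracking its image under $\kappa$ in the combinatorial collapse $\mathcal{C}$, and to use the rigidity of the zero-entropy structure: since $\mathcal{O}$ has two discrete components with $0$ the only inner point, the pattern $\mathcal{C}$ is itself a zero-entropy pattern with two discrete components, so by induction on the number of collapses (or by directly invoking the explicit structure of zero-entropy patterns with two components) we know exactly how its inter-block paths split. First I would combine Lemma~\ref{inter-blocks} with Lemma~\ref{herencia}: since $\mathcal{O}$ is $\pi$-irreducible with respect to $\pi$ (because $\pi$ is inter-block), $\overline{\pi}$ is inter-block in $\mathcal{C}$ or else in-block; if $\overline{\pi}$ is inter-block, then by the inductive structure it splits in at most $n/q$ iterates (the period of $\mathcal{C}$ being $n/q$), so Lemma~\ref{herencia} gives case (i) immediately. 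The delicate part is when $\overline{\pi}$ is in-block in $\mathcal{C}$, which is exactly the hypothesis of the final two assertions (a),(b), and which drives the ``strict pre-image'' alternative in (ii).

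Next I would pin down what happens when $\overline{\pi}$ is in-block. By Proposition~\ref{twocharact}, $\overline{\pi}$ never splits in $\mathcal{C}$, so $\overline{f^i(\pi)}=f^i(\overline{\pi})$ is a basic path in $\mathcal{C}$ for all $i$; hence the only way $\pi$ can split is by having its two endpoints, both of which collapse to the same moving in-block path of $\mathcal{C}$, land in different discrete components of $\mathcal{O}$ within a single block's convex hull. Here is where the arithmetic enters: the block containing $0$ is $O_0=\{0,\tfrac{n}{q},\dots,\tfrac{(q-1)n}{q}\}$ (the standing computation used already in the proof of Theorem~\ref{2inners_3op}), and the inner point $0$ is the unique point of $\mathcal{O}$ where the two discrete components meet. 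An in-block path of $\mathcal{O}$ that contains $0$ must be of the form $\{0,a\}$ or $\{0, a+\tfrac{q-1}{q}n\}$ with $0<a<n/q$, since those are the two neighbours of $0$ inside $O_0$ (the block is a trivial chain through $0$). I would then show: starting from $\pi=\{0,a\}$, applying $f$ shifts the time labels by $+1$, so after $n/q - a$ steps the ``close'' endpoint reaches label $\tfrac{n}{q}$, which under $\kappa$ is again the point $0$ of $\mathcal{C}$ but now the endpoints have crossed the central point $0$ of $\mathcal{O}$ — forcing the split — giving (a). For (b), starting from $\{0,a+\tfrac{q-1}{q}n\}$ the count is $n/q$ iterates by the same reasoning applied to the other side. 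The key tool is that collapsing is injective on $P\setminus O_0$ and that the preimage of a single point of $\mathcal{C}$ inside a block is the block itself, a trivial chain, so ``crossing $0$'' is the only mechanism by which an in-block path of $\mathcal{O}$ projecting to a non-splitting path of $\mathcal{C}$ can split.

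Finally, for the general dichotomy (i)/(ii): any inter-block $\pi$ either has $\overline{\pi}$ inter-block (giving (i) as above) or $\overline{\pi}$ in-block. In the in-block case, iterate $\pi$ until it first becomes an in-block path of $\mathcal{O}$ containing $0$ — this must happen within $n/q$ iterates because $\overline{\pi}$ never splits while the label of $0$'s image is a moving target returning to a block of $\mathcal{C}$ containing the projection. If at that moment $\pi$ has already split we are in case (i); otherwise $\pi$ has become a strict pre-image of a path $\sigma=\{0,a+\tfrac{q-1}{q}n\}$ with $\overline{\sigma}$ in-block, and then part (b) tells us $\sigma$ takes a further $n/q$ iterates to split, for a total of at most $2n/q$ — which is (ii). The main obstacle I anticipate is the bookkeeping in this last step: making precise the claim that an inter-block path whose projection is in-block must, within $n/q$ iterates, either split or arrive at a $0$-containing in-block path of the specific form $\{0,a+\tfrac{q-1}{q}n\}$ (rather than $\{0,a\}$) — one must rule out that it lands on $\{0,a\}$ with $a$ small, which would then split too quickly to be a genuine pre-image of the ``slow'' path. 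This requires carefully tracking on which side of the inner point $0$ the endpoints of $\pi$ sit relative to $O_0$, and here drawing the picture (as the paper advises) is essentially mandatory; the argument is really a finite case check on the relative cyclic order of the two endpoints and the points of $O_0$.
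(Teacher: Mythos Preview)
Your overall architecture matches the paper's: split off the case where $\overline{\pi}$ is inter-block in $\mathcal{C}$ using Lemma~\ref{inter-blocks} together with Lemma~\ref{herencia} (this gives (i) immediately), and then analyse the remaining case $\overline{\pi}$ in-block. That part is fine.

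The gap is in the second half. First, there is a terminology slip that obscures the argument: the paths $\{0,a\}$ and $\{0,a+\tfrac{q-1}{q}n\}$ with $0<a<n/q$ are \emph{inter}-block paths of $\mathcal{O}$ (their endpoints lie in $P_0$ and $P_a$), not in-block paths; and an inter-block path never becomes in-block under iteration, so the sentence ``iterate $\pi$ until it first becomes an in-block path of $\mathcal{O}$ containing $0$'' cannot be what you mean. More substantially, your direct label-tracking for (a), (b) and for the dichotomy (i)/(ii) is not enough. Knowing that after $n/q-a$ steps one endpoint has label $n/q$ does not by itself tell you that $0$ separates the two endpoints in $\mathcal{O}$; for that you need to know which discrete component each endpoint sits in, and this depends on the full collapse structure of $\mathcal{O}$, not just on the top-level block $P_0$. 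You correctly flag this bookkeeping as the main obstacle, but you do not have a mechanism to resolve it.

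The paper's mechanism is the piece you are missing. It first treats the base case where the collapse sequence has length $s=1$ (so $\mathcal{C}$ is trivial and $P_0$ is literally one of the two discrete components); there the splitting criterion is simply ``the first time one endpoint becomes a nonzero multiple of $n/q$'', and an explicit Type I/II/III classification of inter-block paths yields (a), (b) and the dichotomy directly. For general $s>1$, when $\overline{\pi}$ is in-block in $\mathcal{C}$ (say $\overline{\pi}\subset S_\eta$), the paper iterates until $\overline{\pi}$ lands in $S_0$ and then passes to the \emph{subordinated} pattern $\mathcal{O}'=([\langle Q_0\rangle,Q_0],[f^{n/(q_{s-1}q)}])$, where $Q_0$ is the union of the $P$-blocks that collapse into $S_0$. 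This $\mathcal{O}'$ again has two discrete components, zero entropy, and collapse sequence of length $1$, so the base-case Type I/II/III analysis applies to it verbatim; translating back to $\mathcal{O}$ by multiplying iterate counts by $n/(q_{s-1}q)$ gives the stated bounds. This reduction to a subordinated pattern with trivial collapse is what replaces the ``finite case check on the relative cyclic order'' you anticipate, and without it (or an equivalent device) the argument does not close.
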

\begin{proof}
Let $\{\mathcal{O}_i\}_{i=0}^s$ be the sequence of collapses of $\mathcal{O}$ according to Remark~\ref{explosions} and let $q_i$ be the cardinality of the blocks of $\mathcal{O}_i$. Since $\mathcal{O}$ is not trivial, $s\ge1$. The proof follows in two steps. First, we prove the result for a sequence of collapses of length $s=1$. Then we tackle the general case $s>1$ using the case $s=1$ on a particular subordinated pattern of $\mathcal{O}$.

Assume first that $s=1$ and let $q_0=n/q_1$ be the period of the combinatorial collapse $\mathcal{C}=\mathcal{O}_0$, which is a trivial pattern. The pattern $\mathcal{O}=\mathcal{O}_1$ is formed by $q_0=n/q_1$ trivial blocks of $q_1$ points. Let us denote by $P_i$, $i=0,\dots,q_0-1$, the trivial blocks of the pattern $\mathcal{O}$ according to the standing convention in Remark~\ref{standing2}. Notice that the block $P_0$, formed by the multiples of $q_0$ (mod $n$), is one of the two discrete components of $\mathcal{O}$.

Let $\pi$ be an inter-block basic path of $\mathcal{O}$. Since $\mathcal{C}$ is trivial, $\overline{\pi}$ is in-block in $\mathcal{C}$. The labeling of $\mathcal{O}$ is fixed by the unique inner point. So, we can write the points in $P_i$ as $i+\ell q_0$ with $0\leq\ell\leq q_1-1$. The point $i+(q_1-1)q_0$ will be called the last point in $P_i$. The inter-block basic path $\pi$ connects a point of the block $P_i$ with a point of the block $P_j$. Along the proof we consider that the blocks are ordered in such a way that $0\leq i<j\leq q_0-1$. We distinguish three types of inter-block basic paths of $\mathcal{O}$ depending on the points that are connected.

\begin{figure}
\centering
\includegraphics[scale=0.65]{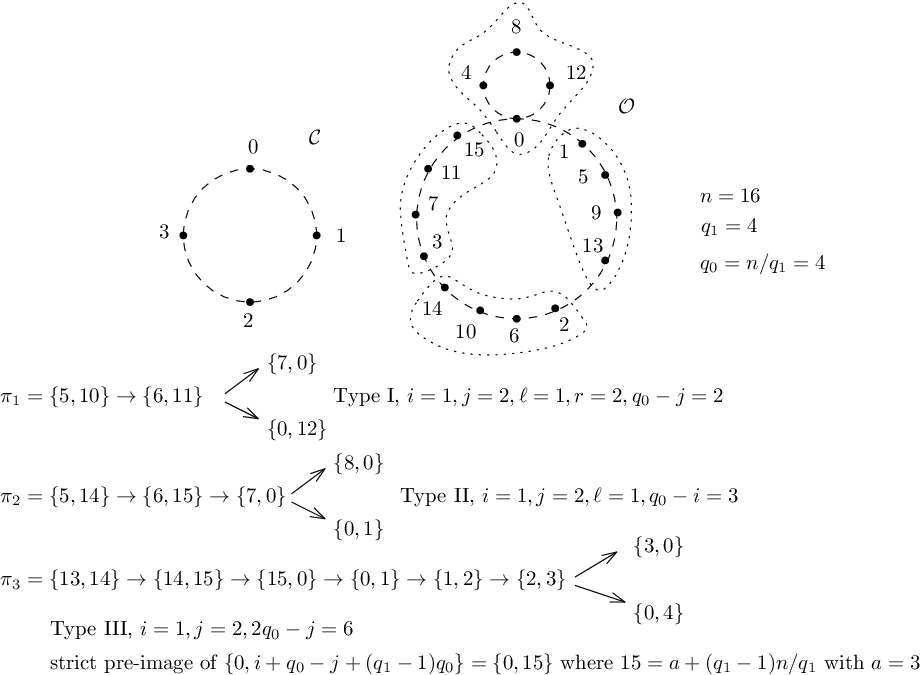}
\caption[fck]{The three types of paths in Proposition~\ref{bifurcacions}.\label{exemple1explosio}}
\end{figure}

Type I. $\pi$ connects any point of $P_i$ with one of $P_j$ that is not the last one, $0\leq i<j\leq q_0-1$. In this situation, we can write $\pi=\{i+\ell q_0,j+rq_0\}$ with $0\leq \ell\leq q_1-1$ and $0\leq r\leq q_1-2$.

Type II. $\pi$ connects a point of the block $P_i$ that is not the last one with the last point of $P_j$, $0\leq i<j\leq q_0-1$. In this case, $\pi=\{i+\ell q_0, j+(q_1-1)q_0\}$ with $0\leq \ell\leq q_1-2$.

Type III. $\pi$ connects the last points of the blocks $P_i$ and $P_j$, $1\leq i<j\leq q_0-1$. In this latter case, $\pi=\{i+(q_1-1)q_0,j+(q_1-1)q_0\}$.

Since $\pi$ is an inter-block basic path, if $i=0$ for Type I and II then $\ell=0$. That is, only the point $0\in P_0$ can be connected to a point of a different trivial block. For this reason, $i\geq 1$ in Type III.

Notice that an inter-block basic path $\pi=\{a,b\}$ splits in $k$ iterates if $k$ is the smallest integer such that either $a+k$ or $b+k$ is a multiple of $q_0$ different from $0$ (mod $n$). Indeed, since $\pi$ is inter-block, $a+k$ and $b+k$ cannot be both multiple of $q_0$. Otherwise, $f^k(\pi)$ is a basic path joining two points of $P_0$ and is, therefore, in-block. Since $0$ is the only inner point and $P_0$ is a whole discrete component, the previous condition implies that $a+k$ and $b+k$ are on different discrete components. On account of the previous, now we compute the iterates that an inter-block basic path of each type requires to split.

If $\pi$ is of Type I, then it splits in $q_0-j$ iterates:
\[ \pi \rightarrow \overset{q_0-j)}{\cdots} \rightarrow \{i+q_0-j+\ell q_0,0\}\cup\{0,(r+1)q_0\}. \]
Indeed, since $0\leq i<j\leq q_0-1$ then $j+rq_0$ reaches the point $(r+1)q_0$ in $q_0-j$ iterates, whereas $i+\ell q_0$ needs $q_0-i>q_0-j$ iterates to reach a multiple of $q_0$. Since $0\leq r\leq q_1-2$ then $(r+1)q_0\neq 0$ (mod $n$), so the splitting occurs.

If $\pi$ is of Type II, then it splits in $q_0-i$ iterates:
\[ \pi \rightarrow \overset{q_0-i)}{\cdots} \rightarrow \{(\ell+1)q_0,0\}\cup\{0,j-i\}. \]
Indeed, in this case, although $i<j$ and $j+(q_1-1)q_0$ reaches a multiple of $q_0$ in $q_0-j$ iterates, the multiple is $q_1q_0=n=0$ (mod $n$). Therefore, there is no splitting in $q_0-j$ iterates. On the other hand, in $q_0-i$ iterates the splitting occurs.

Finally, if $\pi$ is of Type III, then it splits in $2q_0-j$ iterates. Indeed, in $q_0-j$ iterates:
\[ \pi \rightarrow \overset{q_0-j)}{\cdots} \rightarrow \{i+q_0-j+(q_1-1)q_0,0\}\]
The basic path $\{i+q_0-j+(q_1-1)q_0,0\}$ is of Type II with $i=0$. So, it splits in $q_0$ iterates. Summing up, $\pi$ splits in $2q_0-j$ iterates.

The previous discussion proves the result for $s=1$. Indeed, every inter-block basic path splits in at most $q_0=\frac{n}{q_1}$ iterates with the exception of the strict pre-images of $\{0,a+(q_1-1)q_0\}$ with $0<a=i+q_0-j<q_0$, which split in $2q_0-j<\frac{2n}{q_1}$ iterates. Moreover, the case (a) corresponds to a Type I basic path by taking $i=\ell=0$, $j=a$ and $r=0$, so $\pi$ splits in $q_0-a=n/q_1-a$ iterates.
Taking $i=\ell=0$ and $j=a$ on a Type II basic path, $\pi=\{0,a+(q_1-1)q_0\}$ splits in $q_0=n/q_1$ iterates, proving (b). In Figure~\ref{exemple1explosio} we show examples of each type for $n=16$ and $q_0=4$.

\begin{figure}
\centering
\includegraphics[scale=0.65]{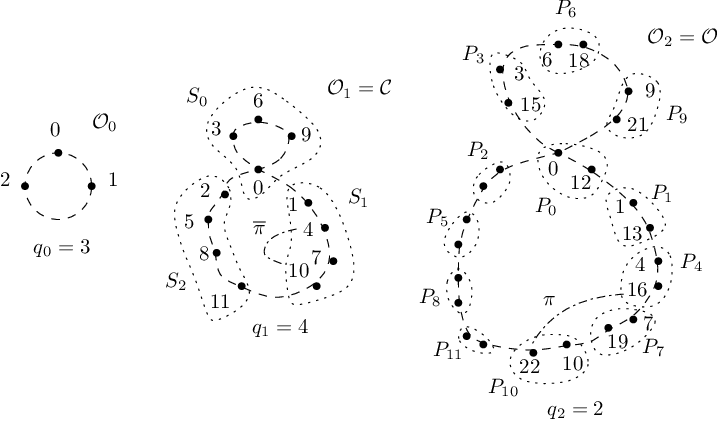}
\caption[fck]{Notation in the proof of Proposition~\ref{bifurcacions}.\label{nova}}
\end{figure}

Assume now that the sequence of collapses of $\mathcal{O}$ has length $s>1$. Set $\mathcal{C}:=\mathcal{O}_{s-1}$ and $\mathcal{O}:=\mathcal{O}_s$. Moreover, each pattern $\mathcal{O}_i$ for $1\le i\le s$ has a unique inner point, labeled as $0$ according to Remark~\ref{standing3}. Let $q_0$ be the period of $\mathcal{O}_0$ and, for $1\le i\le n$, let $q_i$ be the cardinality of the blocks of $\mathcal{O}_i$. Then, $n=\prod_{i=0}^s q_i$. According to the notation in the statement, $q_s=q$. The pattern $\mathcal{O}$ has a maximal structure of $n/q$ trivial separated blocks of $q$ points, and the pattern $\mathcal{C}$ has a maximal structure of $n/(q_{s-1}q)$ trivial blocks of cardinality $q_{s-1}$. Let us denote by $S_i$ and $P_i$ the blocks of the patterns $\mathcal{C}$ and $\mathcal{O}$, respectively. Since 0 is the unique inner point in
$\mathcal{O}$, from Lemma~\ref{critic} it follows that 0 is bidirectional. Then, the trivial block $P_0$ of $\mathcal{O}$ and the trivial block $S_0$ of $\mathcal{C}$ are contained
in different 0-branches. See Figure~\ref{nova} for an example with $s=2$, $q_0=3$, $q_1=4$, $q=q_2=2$, $n=24$.

Let $\pi$ be an inter-block basic path of $\mathcal{O}$. If $\overline{\pi}$ is inter-block in $\mathcal{C}$, by Lemma~\ref{inter-blocks} $\overline{\pi}$ splits before $n/q$ iterates.
By Lemma~\ref{herencia}, this property is inherited by $\pi$, which splits in at most $n/q$ iterates, as desired.

Let us assume now that $\pi$ is an inter-block basic path of $\mathcal{O}$ such that $\overline{\pi}$ is in-block in $\mathcal{C}$. That is, $\overline{\pi}$ is contained in
$S_{\eta}$ for some $\eta\in\{0,\dots,\frac{n}{q_{s-1}q}-1\}$. Note that all points in a block of $\mathcal{C}$ differ by a multiple of $n/(q_{s-1}q)$, while all points in a block
of $\mathcal{O}$ differ by a multiple of $n/q$. It follows that $\overline{\pi}$ has the form
\[ \overline{\pi}=\{\overline{a},\overline{b}\}=\{\eta+i\tfrac{n}{q_{s-1}q},\eta+j\tfrac{n}{q_{s-1}q}\} \]
with $0\le\eta\leq \frac{n}{q_{s-1}q}-1$ and $0\le i<j\leq q_{s-1}-1$, while $\pi$ has the form
\[ \pi=\{a,b\}=\{\overline{a}+\ell\tfrac{n}{q},\overline{b}+r\tfrac{n}{q}\} \]
with $0\le\ell,r\leq q-1$. For the sake of intuition, note that $\eta$ labels the block $S_{\eta}$ of $\mathcal{C}$ containing $\overline{\pi}$, while the blocks of $\mathcal{O}$
containing $a$ and $b$ are, respectively, $P_{\overline{a}}$ and $P_{\overline{b}}$. Going back to the example shown in Figure~\ref{nova}, if we take $\pi=\{16,22\}$, then
$\overline{\pi}=\{4,10\}$, $\eta=1$, $i=1$, $j=3$, $\overline{a}=4$, $\overline{b}=10$, $\ell=r=1$.

Let us study the iterates $f^i(\pi)$. Since 0 is the unique inner point in $\mathcal{O}$, a pair $\{x,y\}$ of points of $\mathcal{O}$ is not a basic path if and only if 0 separates
$x$ and $y$. Since $\overline{\pi}$ is in-block in $\mathcal{C}$, it never splits by Proposition~\ref{twocharact}. Moreover, $0\in P_0$. It follows that a basic path $\pi$ may split in $k$ iterates only if $\overline{f^k(\pi)}\subset S_0$. Let $\pi_0$ be the first iterate of $\pi$ such that $\overline{\pi}_0\subset S_0$.
Then,
\[
\pi_0=\{\tfrac{n}{q_{s-1}q}(i+\ell q_{s-1}),\tfrac{n}{q_{s-1}q}(j+rq_{s-1})\}
\]
for some $0\le i<j\leq q_{s-1}-1$ and $0\le\ell,r\leq q-1$. Let us look at the worst-case scenario by assuming that $\pi_0$ is a basic path.
That is to say, we have the sequence of non-splitting coverings
\[
\pi \rightarrow f(\pi) \rightarrow f^2(\pi) \rightarrow \overset{\frac{n}{q_{s-1}q}-\eta)}{\cdots}  \rightarrow \pi_0.
\]

In order to bound the number of iterates required by $\pi$ to split, we study $\pi_0$. Let us consider the subordinated pattern $\mathcal{O}':=([\chull{P_0}_T,P_0],[f^{\frac{n}{q_{s-1}q}}])$. Note that $\mathcal{O}'$ has two discrete components, entropy zero and a maximal structure of $q_{s-1}$ trivial blocks given by
\[ P_0\cup P_{\frac{n}{q_{s-1}q}}\cup\ldots\cup P_{\frac{(q_{s-1}-1)n}{q_{s-1}q}}. \]

Moreover, the corresponding combinatorial collapse $\mathcal{C}'$ is a trivial pattern of period $q_{s-1}$. In other words, the sequence of collapses of $\mathcal{O}'$ reduces to $\{\mathcal{C}',\mathcal{O}'\}$ and thus we can apply the discussion about types of basic paths and coverings used in the case $s=1$. Let us take the labeling of $\mathcal{O}'$ such that the only inner point reads as 0. See Figure~\ref{exemple2explosio} for a picture of the patterns $\mathcal{C}'$ and $\mathcal{O}'$ corresponding to the example shown in Figure~\ref{nova}.

\begin{figure}
\centering
\includegraphics[scale=0.65]{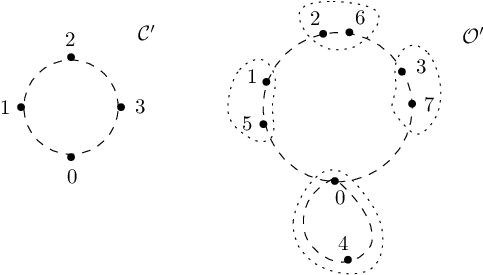}
\caption[fck]{The subordinated pattern $\mathcal{O}'$ and its collapse $\mathcal{C}'$ for the example shown in Figure~\ref{nova}.\label{exemple2explosio}}
\end{figure}

Notice that there is a correspondence between the basic path $\pi_0$ in $\mathcal{O}$ and the basic path $\{i+\ell q_{s-1},j+rq_{s-1}\}$ in $\mathcal{O}'$. Since $\pi_0$ may only split when $\overline{\pi}_0$ returns to $S_0$, it suffices to study the number of iterates required by $\{i+\ell q_{s-1},j+rq_{s-1}\}$ to split in $\mathcal{O}'$ and then multiply the length of the sequence of paths by $\frac{n}{q_{s-1}q}$. As it was stated in the discussion of the case $s=1$, we have three situations depending on the type of path.

If $\{i+\ell q_{s-1},j+rq_{s-1}\}$ in $\mathcal{O}'$ is of Type I then $\pi_0$ splits in $\frac{n}{q_{s-1}q}(q_{s-1}-j)=\frac{n}{q}-\frac{j}{q_{s-1}q}n$ iterates in $\mathcal{O}$. Taking $i=\ell=r=0$ and $a=\frac{j}{q_{s-1}q}n$, this proves (a).

If $\{i+\ell q_{s-1},j+rq_{s-1}\}$ in $\mathcal{O}'$ is of Type II then $r=q-1$ and $\pi_0$ splits in $\frac{n}{q_{s-1}q}(q_{s-1}-i)=\frac{n}{q}-\frac{i}{q_{s-1}q}n$ iterates in $\mathcal{O}$. Taking $i=\ell=0$ and $a=\frac{j}{q_{s-1}q}n$, this proves (b).

Lastly, if $\{i+\ell q_{s-1},j+rq_{s-1}\}$ in $\mathcal{O}'$ is of Type III then $\ell=r=q-1$ and $\pi_0$ splits in $\frac{n}{q_{s-1}q}(2q_{s-1}-j)=\frac{2n}{q}-\frac{j}{q_{s-1}q}n$ iterates in $\mathcal{O}$ and it is an strict pre-image of the basic path $\{0,\frac{n}{q_{s-1}q}(i+q_{s-1}-j+(q-1)q_{s-1})\}$.

The previous holds for $\pi_0$. In order to bound the iterates required by $\pi$ to split we add $\frac{n}{q_{s-1}q}-\eta$ to the previous. So, depending on the types before, for $1\le\eta\le\frac{n}{q_{s-1}q}-1$, either
\begin{itemize}
\item $\pi$ splits in $\frac{n}{q}-\frac{j-1}{q_{s-1}q}n-\eta$ iterates, or
\item $\pi$ splits in $\frac{n}{q}-\frac{i-1}{q_{s-1}q}n-\eta$ iterates, or
\item $\pi$ splits in $\frac{2n}{q}-\frac{j-1}{q_{s-1}q}n-\eta$ iterates and it is an strict pre-image of \[ \{0,\tfrac{n}{q_{s-1}q}(i+q_{s-1}-j+(q-1)q_{s-1})\}. \]
\end{itemize}
This proves that every inter-block basic path of $\mathcal{O}$ splits after at most $2n/q$ iterates. Moreover, the only inter-block basic paths splitting in more than $n/q$ iterates are strict pre-images of some $\{0,a+\frac{q-1}{q}n\}$, with $0<a=\frac{n}{q_{s-1}q}(i+q_{s-1}-j)<\frac{n}{q}$, proving the result.
\end{proof}

The previous result states that almost every inter-block basic path of a zero entropy pattern with two discrete components splits in at most $n/q$ iterates with
the exception of those considered in (ii). The following results are concerned with the bound for the latter case. The first result states that the ``time reverse''
of a zero entropy pattern $\mathcal{O}$ with two discrete components coincides with $\mathcal{O}$. Figure~\ref{reverse} shows an example that illustrates this remarkable
property, that is not true for general zero entropy patterns. It is possible to prove it using sequences of collapses and Proposition~\ref{56}, but we use
a result from \cite{forward} to get a considerably shorter proof.

\begin{figure}
\centering
\includegraphics[scale=0.65]{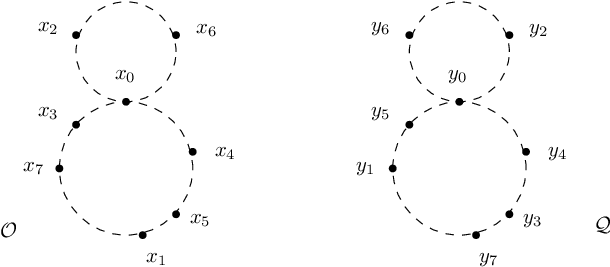}
\caption[fck]{A pattern $\mathcal{O}$ and its time reverse $\mathcal{Q}$ as defined in Lemma~\ref{preimatges}.\label{reverse}}
\end{figure}

\begin{lemma}\label{preimatges}
Let $(T,P,f)$ be the canonical model of an $n$-periodic pattern $\mathcal{O}$ with entropy zero and two discrete components. Let $P=\{x_i\}_{i=0}^{n-1}$ be time labeled. Consider
the relabeling of $P$ given by $y_i:=x_{n-i\bmod n}$ and the map $\map{g}{P}$ defined by $g(y_i):=y_{i+1\bmod n}$ for $0\le i<n$. Then, $([T,P],[g])=\mathcal{O}$.
\end{lemma}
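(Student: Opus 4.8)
The relabeling $y_i=x_{n-i\bmod n}$ turns $g$ into the inverse of $f$ on $P$: the rule $g(y_i)=y_{i+1}$ reads $g(x_{n-i})=x_{n-i-1}$, so $g=f^{-1}\evalat{P}$ as a cyclic permutation of $P$. Hence $([T,P],[g])$ is the time reversal $\mathcal{O}^{\ast}$ of $\mathcal{O}$, and the lemma asserts that $\mathcal{O}^{\ast}=\mathcal{O}$. Unravelling the definition of equality of patterns, this is the same as producing a bijection $\map{\phi}{P}$ which preserves the discrete components of $(T,P)$ and conjugates $f\evalat{P}$ to $f^{-1}\evalat{P}$. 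Since the centralizer of the $n$-cycle $f\evalat{P}$ is $\langle f\evalat{P}\rangle$ and $x_i\mapsto x_{-i\bmod n}$ is one such conjugation, \emph{every} conjugation of $f\evalat{P}$ to $f^{-1}\evalat{P}$ has the form $\phi(x_i)=x_{-i-k\bmod n}$ for a fixed $k$, i.e.\ it is a reflection of the time labels. So what really has to be proved is that a zero entropy pattern with two discrete components has a reflection symmetry of its time labels, and the proof must locate the center of that reflection.

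The plan is to extract this symmetry from the description of patterns with two discrete components established in \cite{forward}. That description attaches to each such pattern an explicit combinatorial invariant --- roughly, the positions occupied by the orbit points along the two arms of the canonical model, together with the order in which $f$ runs through them --- and comes with a rigidity statement: two patterns with two discrete components carrying the same invariant are equal. Granting this, the argument has three short steps. First, the description of \cite{forward} applies to $\mathcal{O}^{\ast}$, which is again a pattern with two discrete components (indeed $h(\mathcal{O}^{\ast})=0$, because the criterion for positivity of the entropy of a pattern with two discrete components given in \cite{forward} is invariant under $i\mapsto -i$; equivalently, reversing time transposes the transition matrix that governs the entropy and so leaves its spectral radius unchanged). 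Second, compute the invariant of $\mathcal{O}^{\ast}$ from that of $\mathcal{O}$. Third, check that the two invariants coincide; then $\mathcal{O}^{\ast}=\mathcal{O}$, and undoing the relabeling yields $([T,P],[g])=\mathcal{O}$.

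The delicate step is the second one. Passing to the inverse permutation may \emph{a priori} interchange or re-index the two arms of the canonical model, and in particular it may move the distinguished inner point, so that the conjugating bijection $\phi$ above will in general not be the bare reflection $x_i\mapsto x_{-i}$ but $x_i\mapsto x_{-i-k}$ for a shift $k$ that has to be identified; verifying that the corresponding reflection really does preserve discrete components is exactly where the combinatorial bookkeeping provided by \cite{forward} is needed. The alternative, longer route --- induction along the sequence of collapses of $\mathcal{O}$ (Definition~\ref{explosions}), using Proposition~\ref{56} and the good interaction between combinatorial collapse, time reversal and the property of having two discrete components --- avoids citing \cite{forward} at the cost of re-deriving essentially the same structural information, which is why the shortcut is preferable.
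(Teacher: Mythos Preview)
Your proposal correctly reformulates the lemma as a time-reversal symmetry and correctly observes that the required conjugacy must be a reflection $\phi(x_i)=x_{-i-k\bmod n}$ for some shift $k$. But what you have written is an outline, not a proof: you explicitly label the identification of $k$ and the verification that the reflection preserves discrete components as ``the delicate step'', and then defer it to an unspecified ``combinatorial invariant'' from \cite{forward}. That is the whole content of the lemma, and it is left undone.

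The paper's argument is shorter and more concrete, and it shows exactly what you are missing. First, it pins down $k$ by choosing the labeling so that the unique inner point is $x_0$; then the natural candidate is $\phi(x_i)=x_{n-i}$, which fixes $x_0$. Second, instead of a classification-type invariant, it uses the \emph{forward triplet} criterion of \cite{forward} (Theorem~1.1 there): a tree map has positive entropy if and only if some iterate has a forward triplet $(a,b,c)$ with $b\in(a,c)$, $f(a)=b$, $f(b)=c$ inside a single periodic orbit. Since $h(\mathcal{O})=0$, no iterate of $f$ has a forward triplet. Now suppose $x_i$ and $x_{n-i}$ lay in different discrete components for some $1\le i<n$; then $x_0\in(x_{n-i},x_i)$, and since $f^i(x_{n-i})=x_0$ and $f^i(x_0)=x_i$, the triple $(x_{n-i},x_0,x_i)$ is a forward triplet of $f^i$, a contradiction. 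Hence $x_i$ and $x_{n-i}$ always lie in the same discrete component, which is exactly the statement that $\phi$ preserves discrete components. Your longer inductive route via the sequence of collapses would also work, but the forward-triplet shortcut is a two-line argument once the inner point is placed at $x_0$.
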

\begin{proof}
Assume without loss of generality that $x_0$ is the unique inner point of $\mathcal{O}$. From the definitions we get that $P$ is an $n$-periodic orbit of $g$, time labeled
as $P=\{y_i\}_{i=0}^{p-1}$. Thus, $([T,P],[g])$ is an $n$-periodic pattern $\mathcal{Q}$. By definition, $y_0=x_0$, so that $y_0$ is the only inner
point of $\mathcal{Q}$. To see that $\mathcal{O}=\mathcal{Q}$ we have to show that both patterns have the same discrete components.

For any tree map $\map{F}{S}$, an ordered set $(a,b,c)$ of three points of $S$ is called a \emph{forward triplet of $F$} if $b\in(a,c)$, $f(a)=b$, $f(b)=c$,
and $\{a,b,c\}$ is contained in a periodic orbit of $F$. By Theorem~1.1 of \cite{forward}, $F$ has positive entropy if and only if there exists $k\ge1$ such that $F^k$
has a forward triplet. Thus, since $h(f)=h(\mathcal{O})=0$, $f$ cannot have forward triplets. It easily follows that both $x_i$ and $x_{n-i}$ belong to the same discrete
component of $\mathcal{O}$ for all $1\le i<n$. But $\{x_i,x_{n-i}\}=\{y_{n-i},y_i\}$, implying that both $\mathcal{O}$ and $\mathcal{Q}$ have exactly the same discrete
components.
\end{proof}

\begin{lemma}\label{pre-maximal}
The basic path $\sigma=\{0,a+\frac{q-1}{q}n\}$ with $0<a<\frac{n}{q}$ in (ii) of Proposition~\ref{bifurcacions} has at most $a-1$ strict pre-images. Moreover, a basic path
$\pi=\{0,y\}$ cannot be an strict pre-image of $\sigma$.
\end{lemma}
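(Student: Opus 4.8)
The plan is to reduce both claims to a single fact about $\mathcal{O}$: that the points $a$ and $n/q$ lie in different discrete components. Throughout write $m:=n/q$ and $b:=a+\tfrac{q-1}{q}n=a+(q-1)m$, so that $\sigma=\{0,b\}$, and recall that $f$ acts on the time-labelled orbit by $x\mapsto x+1\pmod n$. First I would observe that $\sigma$ is exactly the path produced in case (ii) of Proposition~\ref{bifurcacions}, so $\overline{\sigma}$ is in-block in $\mathcal{C}$; hence $\sigma$ satisfies the hypotheses of Proposition~\ref{bifurcacions}(b) and splits in exactly $m$ iterates. In particular $f^m(\sigma)$ is not a basic path, and since $f^m(\sigma)=\{m,\;a+(q-1)m+m\}=\{m,\;a+qm\}=\{m,a\}$, we conclude that $a$ and $m$ lie in different discrete components of $\mathcal{O}$. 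Call this fact $(\star)$; it is the only real input, and it falls out the moment one computes $f^m(\sigma)$.

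For the first claim I would make the strict pre-images of $\sigma$ explicit. For $i\ge1$ set $\rho_i:=f^{-i}(\sigma)$, the coordinates reduced mod $n$; since $f$ is a bijection of the orbit, a basic path is a strict pre-image of $\sigma$ precisely when it equals some $\rho_j$ with $\rho_1,\dots,\rho_j$ all basic paths (here one uses that $\rho_0=\sigma$ is a basic path). So it suffices to exhibit one index $i\le a$ for which $\rho_i$ is not a basic path: then every strict pre-image lies among the at most $a-1$ pairwise distinct paths $\rho_1,\dots,\rho_{a-1}$. I would take $i=a$: here $\rho_a=\{n-a,\;b-a\}=\{n-a,\;(q-1)m\}=\{n-a,\;n-m\}$, a genuine two-point subset of $\{1,\dots,n-1\}$. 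Because $h(\mathcal{O})=0$ and $\mathcal{O}$ has two discrete components, the forward-triplet argument in the proof of Lemma~\ref{preimatges} shows that $x$ and $n-x$ always lie in the same discrete component; thus $n-a$ lies in the component of $a$ and $n-m$ in the component of $m$, and these are different by $(\star)$. Hence $\rho_a$ is not a basic path, which gives the first claim.

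For the second claim, suppose toward a contradiction that $\pi=\{0,y\}$ is a strict pre-image of $\sigma$, witnessed by $f^j(\pi)=\sigma$ with $f^0(\pi),\dots,f^j(\pi)$ all basic paths. If $j\ge n$, then $f^i(\pi)$ is a basic path for all $0\le i<n$, so $\pi$ never splits; by Lemma~\ref{inter-blocks} $\pi$ is then in-block, and since the blocks are cyclically permuted by $f$ every forward iterate of $\pi$ is again in-block, contradicting that $\sigma$ is inter-block (its points $0$ and $b$ lie in the distinct blocks $P_0$ and $P_a$, as $b\equiv a\not\equiv 0\pmod m$). Thus $1\le j\le n-1$, and comparing $f^j(\{0,y\})=\{j,\;y+j\}\pmod n$ with $\{0,b\}$ forces $j=b$ and $y=n-b=m-a$, so $\pi=\{0,m-a\}$. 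But then $f^{a}(\pi)=\{a,\;m\}$, which is not a basic path by $(\star)$, while $a\le b=j$ requires $f^a(\pi)$ to be a basic path — a contradiction.

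So both statements rest on $(\star)$. The steps I expect to need the most care are the arithmetic identification $f^m(\sigma)=\{a,m\}$ (and the analogous $\rho_a=\{n-a,n-m\}$), the use of the time-reversal symmetry of zero-entropy two-component patterns to transport $\{n-a,n-m\}$ back to $\{a,m\}$, and, in the second claim, the degenerate case $j\ge n$, which is precisely what the observation that in-block paths stay in-block is there to dispose of.
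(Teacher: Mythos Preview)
Your proof is correct and rests on the same two ingredients as the paper's: the splitting counts of Proposition~\ref{bifurcacions} and the time-reversal symmetry of Lemma~\ref{preimatges}. The organization differs only cosmetically: the paper applies Lemma~\ref{preimatges} globally to convert the strict pre-images of $\sigma$ into the forward iterates of $\{0,\tfrac{n}{q}-a\}$ and then invokes Proposition~\ref{bifurcacions}(a), whereas you invoke Proposition~\ref{bifurcacions}(b) to obtain the key fact $(\star)$ and then use the component symmetry $x\leftrightarrow n-x$ (extracted from the proof of Lemma~\ref{preimatges}) locally on $\rho_a$; for the second claim the paper again passes through the time reverse while you compute $\pi=\{0,m-a\}$ directly and reuse $(\star)$, with the $j\ge n$ case handled via Lemma~\ref{inter-blocks}. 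Both routes are short and equivalent in substance.
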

\begin{proof}
By Lemma~\ref{preimatges}, the pattern $\mathcal{O}$ coincides with its time reverse. In particular, the basic path $\{0,a+\frac{q-1}{q}n\}$ has as many pre-images as basic paths are covered by $\{0,\frac{n}{q}-a\}$ before splitting. The basic path $\sigma$ is inter-block and $\overline{\sigma}$ is in-block in the corresponding combinatorial collapse $\mathcal{C}$ of $\mathcal{O}$. Therefore, the same is true for the basic path $\{0,\frac{n}{q}-a\}$. Since $0<\frac{n}{q}-a<\frac{n}{q}$, by Proposition~\ref{bifurcacions} (a), the basic path $\{0,\frac{n}{q}-a\}$
splits in $\frac{n}{q}-\bigl(\frac{n}{q}-a\bigr)=a$ iterates. This proves the first assertion of the lemma.

The second assertion, using the time reverse property, is equivalent to show that the basic path $\{0,n-y\}$ is not covered by $\{0,\frac{n}{q}-a\}$ before splitting. That is, before $a$ iterates. This is clear, since neither $0$ nor $\frac{n}{q}-a$ map on $0$ before $a$ iterates.
\end{proof}

Now we can use Lemma~\ref{pre-maximal} together with Proposition~\ref{bifurcacions} to find the desired coverings.

\begin{lemma}\label{cover-k}
Let $\mathcal{O}$ be a zero entropy $n$-periodic pattern with two discrete components and a maximal structure of trivial blocks of cardinality $q$.
If $q\geq 3$ then any inter-block basic path of $\mathcal{O}$ covers at least four basic paths in $n$ iterates.
\end{lemma}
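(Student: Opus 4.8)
The plan is to follow the forward orbit of the given inter-block basic path $\pi$, splitting it into basic paths whenever its image ceases to be one, and to exhibit four pairwise distinct basic paths occurring in this ``orbit tree'' within the first $n$ iterates. After relabelling I would assume that $0$ is the unique inner point of $\mathcal{O}$, and record two elementary facts used throughout: the maximal structure has $n/q\ge2$ blocks, so $n\ge 2q\ge6$; and $3n/q\le n$ since $q\ge3$. The engine is the dichotomy of Proposition~\ref{bifurcacions} applied to $\pi$: either (i) $\pi$ splits within $n/q$ iterates, or (ii) $\pi$ is a strict pre-image of a basic path $\sigma=\{0,a+\tfrac{q-1}{q}n\}$ with $0<a<n/q$ and $\overline\sigma$ in-block in the combinatorial collapse $\mathcal{C}$, and $\pi$ splits within $2n/q$ iterates.

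I would settle case (ii) first, since there the distinctness is automatic. Being a strict pre-image, $\pi$ reaches $\sigma$ along a walk $\pi\to f(\pi)\to\cdots\to\sigma$ of length $j\ge1$ in the path graph, so $\pi\ne\sigma$ (a path that splits is not $f$-periodic) and $j\le n/q$. By Proposition~\ref{bifurcacions}(b), $\sigma$ splits after exactly $n/q$ further iterates, and reading off the ``Type~II'' splitting in the proof of Proposition~\ref{bifurcacions} one finds that at that moment $\sigma$ $f$-covers a basic path $\mu=\{0,c\}$ with $c$ a non-zero multiple of $n/q$. Thus $c\in P_0$, so $\mu$ is in-block and never splits by Proposition~\ref{twocharact}; but $\mu$ still $f$-covers $f(\mu)$, which is again in-block. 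Now $\pi,\sigma$ are inter-block and $\mu,f(\mu)$ are in-block, so these four basic paths are automatically distinct once one notes $\mu\ne f(\mu)$ (an in-block path has $f$-orbit of length $n$ or $n/2$, both $>1$) and $\pi\ne\sigma$; and they all occur within $j+n/q+1\le 2n/q+1\le n$ iterates of $\pi$. This disposes of case (ii).

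For case (i) let $k\le n/q$ be the number of iterates in which $\pi$ splits, and $r\ge2$ the number of basic paths it then $f$-covers. The iterates $\pi,f(\pi),\dots,f^{k-1}(\pi)$ together with those $r$ paths already give $\ge k+r$ basic paths, so the only remaining case is $k=1$, $r=2$: $\pi$ immediately $f$-covers exactly two basic paths $\tau_1,\tau_2$, each containing $0$. They cannot both be in-block, for then their non-zero endpoints would be two points of $P_0$ separated by $0\in P_0$, impossible since $P_0$ lies in one discrete component, whose points are pairwise consecutive. Taking $\tau_1$ inter-block and applying Proposition~\ref{bifurcacions} to it, the orbit tree of $\tau_1$ branches within $\le 2n/q$ more iterates, i.e.\ within $1+2n/q\le 3n/q\le n$ iterates of $\pi$, producing $\ge2$ further basic paths; with $\pi$ and $\tau_2$ this is $\ge4$.

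The step I expect to be the main obstacle is the distinctness bookkeeping in case~(i): ruling out that the orbit tree silently collapses onto only three basic paths despite the exhibited branching. Here I would invoke $h(\mathcal{O})=0$, equivalently $\rho(M_{\mathcal{O}})\le1$, which forbids any small closed sub-configuration of the path graph containing a branching vertex on a cycle — such a configuration has spectral radius $>1$ (for three vertices one even recovers the polynomial $x^{3}-2x-1$). Combined with the facts that every split-product contains the unique inner point $0$ and that a basic path $\{x,y\}$ has $f$-orbit of length $n$ or $n/2$, so every cycle in the path graph is long ($\ge n/2\ge3$), this excludes the dangerous coincidences. Making this verification uniform across both the $s=1$ and the $s>1$ regimes of Proposition~\ref{bifurcacions} is the delicate part; the arithmetic room needed ($q\ge3$) is routine.
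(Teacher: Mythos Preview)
Your proposal rests on a misreading of the conclusion. ``Covers at least four basic paths in $n$ iterates'' does \emph{not} ask for four pairwise distinct basic paths appearing as nodes somewhere in the orbit tree up to depth $n$; it asks that the row of $M_{\mathcal{O}}^{\,n}$ indexed by $\pi$ have sum at least $4$, i.e.\ that the splitting tree rooted at $\pi$ have at least four leaves at depth $n$. This is exactly the reading used in Corollary~\ref{n4} to obtain $r_i(M^n)\ge4$ and hence $\rho(M)\ge 4^{1/n}$. Under your reading the lemma would hold trivially for any basic path with four distinct iterates before its first split, which of course says nothing about the spectral radius.

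Concretely: in your case~(ii) the four paths $\pi,\sigma,\mu,f(\mu)$ lie along a single branch of the tree (at depths $0$, $j$, $j+n/q$, $j+n/q+1$); at depth $j+n/q+1$ the tree has only \emph{two} leaves, arising from the single split of $\sigma$. In your case~(i), the reduction ``$k+r\ge4$ unless $k=1$, $r=2$'' is counting nodes, not leaves: for $k=3$, $r=2$ there are still only two leaves at depth $3$. Even in the residual sub-case you analyse, after $\tau_1$ splits once you have three leaves (the two children of $\tau_1$ plus the continuation of $\tau_2$), not four. The paper's argument instead follows one inter-block child through \emph{three} successive splits, each occurring within $n/q$ iterates (Lemma~\ref{pre-maximal} guarantees that a child of the form $\{0,y\}$ is never in alternative~(ii) of Proposition~\ref{bifurcacions}, so the $n/q$ bound always applies), accumulating $2$, then $3$, then $4$ leaves by depth $3n/q\le n$. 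The distinctness bookkeeping you flag as the main obstacle is then irrelevant: distinct leaves of a rooted tree are distinct walks automatically, no spectral argument needed.
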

\begin{proof}
Let us label $\mathcal{O}$ in such a way that $0$ is the unique inner point and let $\pi$ be an inter-block basic path of $\mathcal{O}$. By Proposition~\ref{bifurcacions},
\begin{enumerate}[(i)]
\item either $\pi$ splits in at most $\frac{n}{q}$ iterates,
\item or $\pi$ is a strict pre-image of a basic path $\{0,a+\frac{q-1}{q}n\}$ with $0<a<\frac{n}{q}$ which splits in $\frac{n}{q}$ iterates.
\end{enumerate}

In the case (i), $\pi$ covers two basic paths $\{0,y\}$ and $\{0,z\}$ before $\frac{n}{q}$ iterates. Notice that both $\{0,y\}$ and $\{0,z\}$ cannot be in-block basic paths of $\mathcal{O}$. Otherwise, since $0$ is inner of $\mathcal{O}$ and $y$ and $z$ are contained in different discrete components, the trivial block that contains $0$ would contain points of two different discrete components, a contradiction. Therefore, we can assume $\{0,y\}$ to be an inter-block basic path of $\mathcal{O}$. Moreover, by Lemma~\ref{pre-maximal}, an inter-block basic path of the form $\{0,y\}$ cannot be an strict pre-image of a basic path of the form $\{0,a+\frac{q-1}{q}n\}$ with $0<a<\frac{n}{q}$. Therefore, again by Proposition~\ref{bifurcacions}, $\{0,y\}$ splits in at most $\frac{n}{q}$ iterates, covering two basic paths $\{0,y_1\}$ and $\{0,y_2\}$. Again one of them is inter-block of $\mathcal{O}$ and splits in at most $\frac{n}{q}$ iterates. Therefore, $\pi$ covers at least four basic paths in $\frac{3n}{q}\leq n$ iterates. This proves the result in the case (i).

In the case (ii), by Lemma~\ref{pre-maximal}, $\pi$ covers $\{0,a+\frac{q-1}{q}n\}$ in at most $a-1$ iterates and $\{0,a+\frac{q-1}{q}n\}$ covers $\{0,a\}$ and $\{0,\frac{n}{q}\}$ in $\frac{n}{q}$ iterates. By Proposition~\ref{bifurcacions}(a), the basic path $\{0,a\}$ splits and covers two basic paths $\{0,u\}$ and $\{0,v\}$ in $\frac{n}{q}-a$ iterates and, since one must be inter-block in $\mathcal{O}$, again splits in at most $\frac{n}{q}$ iterates as shown before. Therefore, $\pi$ covers at least four basic paths in $a-1+\frac{n}{q}+\frac{n}{q}-a+\frac{n}{q}=\frac{3n}{q}-1<n$ iterates, proving the result in the case (ii).
\end{proof}

\begin{remark}\label{Ocovers}
Let $\mathcal{P}$ be a pattern and let $\mathcal{O}$ be an opening of $\mathcal{P}$. Let $\pi$ be a basic path of $\mathcal{P}$. Then $\pi$ is also a basic path of $\mathcal{O}$. Moreover, if $\pi$ covers $k$ basic paths in $\ell$ iterates in $\mathcal{O}$ then $\pi$ covers at least $k$ basic paths in $\ell$ iterates in $\mathcal{P}$.
\end{remark}

By collecting all previous results, finally we get the desired lower bound for coverings in a triple chain.

\begin{figure}
\centering
\includegraphics[scale=0.65]{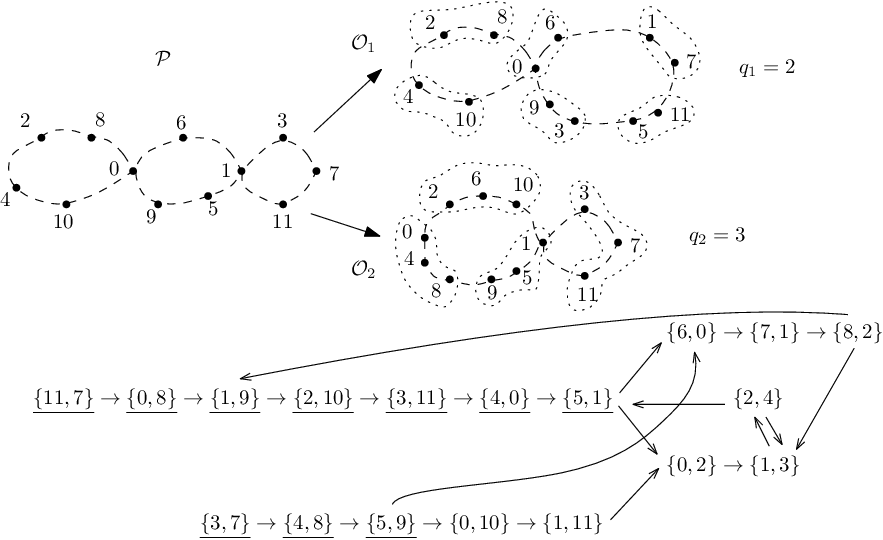}
\caption[fck]{An illustration of the proof of Proposition~\ref{cover4}. Some loops of the $\mathcal{P}$-path graph obtained in the proof are shown. The underlined basic paths are in-block in $\mathcal{O}_2$.\label{opening23}}
\end{figure}

\begin{proposition}\label{cover4}
Let $\mathcal{P}$ be an $n$-periodic $\pi$-irreducible triple chain. Assume that the two possible openings $\mathcal{O}_{1}$ and $\mathcal{O}_{2}$ of $\mathcal{P}$ have entropy zero.
Then, any basic path $\pi$ of $\mathcal{P}$ covers at least four basic paths in $n$ iterates.
\end{proposition}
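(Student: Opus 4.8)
The plan is to run the argument through the two openings of $\mathcal{P}$ at once. Let $\alpha,\beta$ be the inner points of $\mathcal{P}$ and $C_1,C_2,C_3$ its discrete components, labeled so that $C_1\cap C_2=\{\alpha\}$, $C_2\cap C_3=\{\beta\}$ (so $C_1\cap C_3=\emptyset$). Write $\mathcal{O}_1$ for the opening joining $C_1$ and $C_2$ (whose only inner point is $\beta$) and $\mathcal{O}_2$ for the one joining $C_2$ and $C_3$ (only inner point $\alpha$). Both are nontrivial zero entropy patterns with two discrete components, so by Proposition~\ref{56} each carries a maximal separated structure of trivial blocks; let $q_1$ and $q_2$ be the respective block cardinalities, and recall that one block of such a structure is a whole discrete component, of cardinality $q_j$.

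First I would record a purely combinatorial identity. A two-element subset of $P$ is a basic path of $\mathcal{O}_1$ exactly when it lies inside $C_1\cup C_2$ or inside $C_3$, it is a basic path of $\mathcal{O}_2$ exactly when it lies inside $C_2\cup C_3$ or inside $C_1$, and it is a basic path of $\mathcal{P}$ exactly when it lies inside one of $C_1,C_2,C_3$; intersecting the first two descriptions and using $C_1\cap C_3=\emptyset$ gives
\[
\{\text{basic paths of }\mathcal{P}\}=\{\text{basic paths of }\mathcal{O}_1\}\cap\{\text{basic paths of }\mathcal{O}_2\}.
\]
Consequently, if a basic path $\pi$ of $\mathcal{P}$ were in-block in \emph{both} openings, then by Proposition~\ref{twocharact} $f^i(\pi)$ would be a basic path of $\mathcal{O}_1$ and of $\mathcal{O}_2$ for every $i\ge 0$, hence a basic path of $\mathcal{P}$ for every $i\ge 0$; that is, $\pi$ would never split in $\mathcal{P}$, contradicting the $\pi$-irreducibility of $\mathcal{P}$. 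So every basic path $\pi$ of $\mathcal{P}$ is inter-block in at least one opening $\mathcal{O}_j$.

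If $q_j\ge 3$ we finish at once: Lemma~\ref{cover-k} applied in $\mathcal{O}_j$ shows $\pi$ covers at least four basic paths within $n$ iterates in $\mathcal{O}_j$, and Remark~\ref{Ocovers} transfers this to $\mathcal{P}$. The remaining work is the case in which $\pi$ is inter-block only in an opening $\mathcal{O}_j$ with $q_j=2$, hence in-block in $\mathcal{O}_{3-j}$. Here $q_j=2$ is very rigid: since the merged component of $\mathcal{O}_j$ has at least three points, the block that is a whole discrete component must be the opposite one, so $q_1=2$ forces $|C_3|=2$ and $q_2=2$ forces $|C_1|=2$. Using this I would apply Proposition~\ref{bifurcacions} and Lemma~\ref{pre-maximal} inside $\mathcal{O}_j$ to produce explicit covering chains out of $\pi$ — in case (i) of Proposition~\ref{bifurcacions}, $\pi$ splits within $n/2$ iterates with an inter-block child that again splits within $n/2$ iterates; in case (ii), $\pi$ reaches some $\{0,a+\tfrac{q_j-1}{q_j}n\}$ within $a-1$ iterates, which then yields $\{0,a\}$ and $\{0,n/q_j\}$, the former splitting within $n/q_j-a$ further iterates — and then \emph{switch openings} to harvest the missing fourth covering: a path occurring in such a chain that is in-block in $\mathcal{O}_j$ is, by the dichotomy above, inter-block in $\mathcal{O}_{3-j}$ and hence splits there, and those coverings are still coverings in $\mathcal{P}$ by Remark~\ref{Ocovers}. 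Bookkeeping of which basic paths in these chains coincide then yields four pairwise distinct basic paths covered by $\pi$ within $n$ iterates; the resulting loops are those drawn in Figure~\ref{opening23} (the paths in-block in $\mathcal{O}_2$ underlined).

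I expect the last case $q_j=2$ to be the main obstacle. It demands a finite but delicate case analysis — over the position of $\pi$ among $C_1,C_2,C_3$, and over whether $q_1$, $q_2$, or both equal $2$ — and in each branch one must verify not merely that four coverings occur within $n$ iterates but that the four covered basic paths are \emph{pairwise distinct}. This is exactly where the bound $2n/q_j=n$ is tight and where it becomes essential to interleave splittings coming from $\mathcal{O}_1$ and from $\mathcal{O}_2$ rather than working inside a single opening.
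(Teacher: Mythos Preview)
Your high-level plan coincides with the paper's: show every basic path of $\mathcal{P}$ is inter-block in at least one opening, dispatch the case $q_j\ge3$ via Lemma~\ref{cover-k} and Remark~\ref{Ocovers}, and treat the residual $q_j=2$ case by combining Proposition~\ref{bifurcacions} with a switch between the two openings. Two substantive issues remain.

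First, the assertion ``one block of such a structure is a whole discrete component, of cardinality $q_j$'' is not true in general; it is stated in the proof of Proposition~\ref{bifurcacions} only in the special case $s=1$. For instance, the $24$-periodic zero entropy $2$-flower with minimal branching sequence $(3,1),(4,2),(2,1)$ has maximal block cardinality $q=2$, yet its discrete components have $18$ and $7$ points. Hence ``$q_1=2\Rightarrow|C_3|=2$'' and ``$q_2=2\Rightarrow|C_1|=2$'' are unjustified. The paper's argument never uses such a claim, so this is a removable error, but your ``Using this\ldots'' suggests you were planning to lean on it.

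Second --- and this is where the paper's proof does real work that your sketch omits --- you neither rule out $q_1=q_2=2$ nor carry out the case analysis when $q_j=2$. The exclusion of $q_1=q_2=2$ is what makes your ``switch openings'' idea effective: the paper observes that in that case the basic path $\{0,n/2\}$ is in-block in both openings, contradicting $\pi$-irreducibility by your own first paragraph. One may then assume $q_1=2$ and $q_2\ge3$, and the paper splits according to cases (i) and (ii) of Proposition~\ref{bifurcacions}; within (i) there is a further dichotomy according to whether one of the two covered paths is $\{0,n/2\}$ (in-block in $\mathcal{O}_1$, hence inter-block in $\mathcal{O}_2$ and splitting there within $n/q_2\le n/3$ iterates), and in (ii) one must also argue, via preservation of time distance along non-splitting iterates, that $\{0,n/2\}$ cannot be a strict pre-image of a path of the form $\{j,j+b+\tfrac{(q_2-1)n}{q_2}\}$. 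These are exactly the delicate bookkeeping steps you flag as the main obstacle but do not perform.
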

\begin{proof}
By Remark~\ref{Ocovers} a basic path $\pi$ of $\mathcal{P}$ is also a basic path of both $\mathcal{O}_1$ and $\mathcal{O}_2$. We claim that $\pi$ is inter-block for some $\mathcal{O}_i$. Indeed, if $\pi$ is in-block in both $\mathcal{O}_1$ and $\mathcal{O}_2$, then $\pi$ does not split through any of the two inner points of $\mathcal{P}$. Consequently, $\pi$ never splits in $\mathcal{P}$ and so $\mathcal{P}$ is $\pi$-reducible, a contradiction.

The patterns $\mathcal{O}_i$, $i=1,2$, have zero entropy. So, by Proposition~\ref{56}, each of them has a maximal structure of trivial blocks of cardinality $q_i\geq 2$.

Let us first prove the result when $q_i\geq 3$. As stated before, $\pi$ is inter-block for some of the openings, let us say $\mathcal{O}_1$ without loss of generality. Since $q_1\geq 3$, by Lemma~\ref{cover-k}, $\pi$ covers at least four basic paths in $n$ iterates in $\mathcal{O}_1$. By Remark~\ref{Ocovers}, this property is inherited in $\mathcal{P}$, so $\pi$ covers at least four basic paths in $n$ iterates in $\mathcal{P}$. This proves the result in the first situation.

Now assume that $q_1=q_2=2$. In this case, the basic path $\{0,\frac{n}{2}\}$ is in-block in both $\mathcal{O}_1$ and $\mathcal{O}_2$. By the discussion at the beginning of the proof, this produces contradiction with the $\pi$-irreducibility of $\mathcal{P}$.

We are left with the case $q_1=2$ and $q_2\geq 3$. Again, $\pi$ is inter-block in $\mathcal{O}_1$ or $\mathcal{O}_2$. If $\pi$ is inter-block in $\mathcal{O}_2$, the result follows as in the first case since $q_2\geq 3$. So, we can assume that $\pi$ is in-block in $\mathcal{O}_2$ and, in consequence, inter-block in $\mathcal{O}_1$.

Let us relabel $\mathcal{P}$ and, accordingly, the openings $\mathcal{O}_i$, in such a way that the inner point of $\mathcal{O}_1$ is $0$. We denote by $j$ the inner point of $\mathcal{O}_2$. The basic path $\pi$ is in-block in $\mathcal{O}_2$, so in $\mathcal{P}$ the first splitting is through the inner $0$.
Since $\pi$ is inter-block in $\mathcal{O}_1$, by Proposition~\ref{bifurcacions}, one of the following situations occurs in $\mathcal{O}_1$:
\begin{enumerate}[(i)]
	\item either $\pi$ splits in at most $\frac{n}{2}$ iterates,
	\item or $\pi$ is a strict pre-image of a basic path $\{0,a+\frac{n}{2}\}$ with $0<a<\frac{n}{2}$, which splits in $\frac{n}{2}$ iterates.
\end{enumerate}

In both cases $\pi$ covers two basic paths in $\mathcal{O}_1$ after the first splitting. Since $\mathcal{P}$ is a triple chain, at least one of such paths is also a basic path in $\mathcal{P}$. For the sake of brevity, we will focus on the worst scenario which corresponds to assuming that the two basic paths covered in $\mathcal{O}_1$ are also basic paths in $\mathcal{P}$. The reader may easily check that if this is not the case, then a third basic path is covered in $\mathcal{P}$ during the first splitting, and the upper bounds obtained below are valid for the basic path shared between $\mathcal{O}_1$ and $\mathcal{P}$.

Consider the case (i). Since $0$ is the inner point of $\mathcal{O}_1$, then $\pi$ covers in $\mathcal{O}_1$ two basic paths $\{0,y\}$ and $\{0,z\}$ in at most $\frac{n}{2}$ iterates. As noticed above, we are assuming that both $\{0,y\}$ and $\{0,z\}$ are basic paths in $\mathcal{P}$. Clearly, $y\neq z$ and so we can assume also $z\neq \frac{n}{2}$. Consequently, $\{0,z\}$ is an inter-block in $\mathcal{O}_1$. Moreover, by Lemma~\ref{pre-maximal}, $\{0,z\}$ is not an strict pre-image of a basic path of the form $\{0,a+\frac{n}{2}\}$. Then, by Proposition~\ref{bifurcacions}, $\{0,z\}$ splits in at most $\frac{n}{2}$ iterates covering two basic paths $\{0,z_1\}$ and $\{0,z_2\}$. Since $\{0,z\}$ is a basic path in $\mathcal{P}$, then $\{0,z\}$ covers at least two basic paths in $\frac{n}{2}$ iterates in $\mathcal{P}$. Now we have two cases depending on the value of $y$. If $y\neq\frac{n}{2}$ the same argument applies for $\{0,y\}$ and, summing up, $\pi$ covers at least four basic paths in $n$ iterates in $\mathcal{P}$, proving the result in this case. The following diagram illustrates the coverings in this first situation inside case (i).
\[
\includegraphics[scale=1]{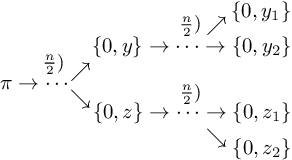}
\]

If $y=\frac{n}{2}$ then $\{0,\frac{n}{2}\}$ is in-block in $\mathcal{O}_1$. Since $\{0,\frac{n}{2}\}$ is a basic path in $\mathcal{P}$, it is also a basic path in $\mathcal{O}_2$. Moreover, it must be inter-block. By Proposition~\ref{bifurcacions}, either $\{0,\frac{n}{2}\}$ covers two basic paths before $\frac{n}{q_2}$ iterates or it is a strict pre-image of a basic path $\{j,j+b+\frac{(q_2-1)n}{q_2}\}$, where $0<b<\frac{n}{q_2}$ and $j$ is the inner point of $\mathcal{O}_2$. The second alternative, however, cannot be satisfied. Indeed, the time distance between the two points of an iterate of a basic path is conserved while there is no splitting. If $\{0,\frac{n}{2}\}$ is a strict pre-image of $\{j,j+b+\frac{(q_2-1)n}{q_2}\}$, then the distance should be conserved, but $b+\frac{(q_2-1)n}{q_2}\geq \frac{n}{2}$. Therefore, $\{0,\frac{n}{2}\}$ covers two basic paths in $\mathcal{O}_2$ in at most $\frac{n}{q_2}$ iterates. Since $q_2\geq 3$ then, summing up, $\pi$ covers at least four basic paths in $n$ iterates in $\mathcal{P}$, proving the result for the case (i). The following diagram illustrates the coverings in this second situation inside case (i).
\[
\includegraphics[scale=1]{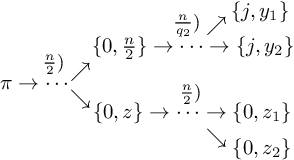}
\]

The basic paths $\{0,8\}$ and $\{3,7\}$ in Figure~\ref{opening23} are examples of maximal length of case (i). The basic path $\{0,8\}$ splits in $\frac{n}{2}=6$ iterates and covers $\{0,y\}=\{0,6\}$ and $\{0,z\}=\{0,2\}$. The path $\{0,6\}$ is of the form $\{0,\frac{n}{2}\}$, so it is in-block in $\mathcal{O}_1$ and inter-block in $\mathcal{O}_2$. It splits in $3<\frac{n}{2}=6$ iterates. The path $\{0,2\}$ is inter-block in both $\mathcal{O}_1$ and $\mathcal{O}_2$ and splits in $2<\frac{n}{2}=6$ iterates. A similar phenomenon occurs for $\{3,7\}$.

Let us now consider the case (ii). By Lemma~\ref{preimatges} the basic path $\{0,a+\frac{n}{2}\}$ has, at most, $a-1$ strict pre-images. Thus, $\pi$ covers $\{0,a+\frac{n}{2}\}$ in at most $a-1$ iterates. Since $\pi$ is in-block in $\mathcal{O}_2$, $\{0,a+\frac{n}{2}\}$ must also be an in-block path of $\mathcal{O}_2$. Hence, $a+\frac{n}{2}=k\frac{n}{q_2}$ for some $1\le k\le q_2-1$.
Moreover, $\{0,a+\frac{n}{2}\}$ splits in $\frac{n}{2}$ iterates and covers the basic paths $\{0,a\}$ and $\{0,\frac{n}{2}\}$. Recall that we are assuming that both $\{0,a\}$ and $\{0,\frac{n}{2}\}$ are basic paths in $\mathcal{P}$ and so in $\mathcal{O}_2$.
The basic path $\{0,\frac{n}{2}\}$ is inter-block in $\mathcal{O}_2$ and, as proved in case (i), covers two basic paths before $\frac{n}{q_2}$ iterates. On the other hand, $\{0,a\}$ is inter-block for $\mathcal{O}_1$ and, since $a<\frac{n}{2}$, it covers two basic paths in $\frac{n}{2}-a$ iterates by Proposition~\ref{bifurcacions}(a). Summing up, $\pi$ covers two basic paths in $a-1+\frac{n}{2}+\frac{n}{q_2}=(k+1)\frac{n}{q_2}-1\leq n-1$ iterates through $\{0,\frac{n}{2}\}$ and two basic paths in $a-1+\frac{n}{2}+\frac{n}{2}-a=n-1$ iterates through $\{0,a\}$, which proves that $\pi$ covers at least four basic paths in $n$ iterates. The following diagram illustrates the coverings in case (ii).
\[
\includegraphics[scale=1]{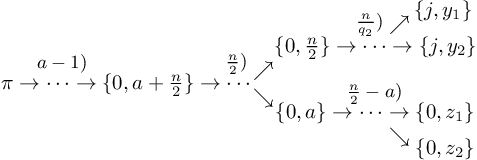}
\]

The basic path $\{11,7\}$ is the only one satisfying case (ii) in Figure~\ref{opening23}. Here $\{0,a+\frac{n}{2}\}=\{0,8\}$ with $a=2$. Indeed, $\{0,8\}$ has at most $a-1=1$ pre-images and $\{0,8\}$ splits exactly in $\frac{n}{2}=6$ iterates covering $\{0,a\}=\{0,2\}$ and $\{0,\frac{n}{2}\}=\{0,6\}$.
\end{proof}

Let $A=(a_{ij})$ be an $n\times n$ nonnegative matrix. Recall that $\rho(A)$ stands for the spectral radius of $A$. For $1\leq i\leq n$, let $r_i(A)=\sum_{j=1}^n a_{ij}$ be the $i$-th row sum of $A$. The following result is well-known \cite{Minc}.

\begin{theorem}\label{spectral}
If $A$ is a nonnegative matrix then
\[ \min_{1 \leq i \leq n} r_i(A)\leq \rho(A) \leq \max_{1 \leq i \leq n} r_i(A). \]
\end{theorem}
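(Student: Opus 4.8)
The plan is to prove the two inequalities separately; both are classical (this is why the statement is attributed to \cite{Minc}), so I would either simply cite it or include the short self-contained arguments below.

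\textbf{Upper bound.} Set $R:=\max_{1\le i\le n} r_i(A)$. Since $A$ is nonnegative, $r_i(A)=\sum_{j=1}^n|a_{ij}|$, so $R$ is exactly the operator norm $\|A\|_\infty$ of $A$ acting on $\mathbb{C}^n$ equipped with the $\ell^\infty$ norm. As the spectral radius is bounded above by every matrix norm induced by a vector norm, I would conclude $\rho(A)\le\|A\|_\infty=R$ in one line.

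\textbf{Lower bound.} Set $m:=\min_{1\le i\le n} r_i(A)$. If $m=0$ there is nothing to prove, since $\rho(A)\ge 0$ for any matrix. Assuming $m>0$, let $\mathbf{1}$ be the all-ones column vector; then the vector of row sums satisfies $A\mathbf{1}\ge m\,\mathbf{1}$ entrywise. I would then establish the auxiliary fact that \emph{if $A\ge 0$, a vector $x$ has all entries positive and $Ax\ge\alpha x$ entrywise with $\alpha>0$, then $\rho(A)\ge\alpha$}: nonnegativity of $A$ gives $A^k x\ge\alpha^k x$ for all $k\ge1$ by induction, and if $\rho(A)<\alpha$ then $\rho(\alpha^{-1}A)<1$, so $(\alpha^{-1}A)^k\to 0$ entrywise, forcing $\alpha^{-k}A^k x\to 0$, which contradicts $\alpha^{-k}A^k x\ge x>0$. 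Applying this with $x=\mathbf{1}$ and $\alpha=m$ yields $\rho(A)\ge m$. (An equivalent route is to pass to a matrix $B$ with $0\le B\le A$ and all row sums equal to $m$, for which $\mathbf{1}$ is an eigenvector, so $\rho(B)=m$, and then invoke monotonicity of the spectral radius on nonnegative matrices.)

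\textbf{Main obstacle.} There is no genuine difficulty here: the whole argument rests only on the two standard linear-algebra facts that $\rho(A)\le\|A\|$ for any vector-norm-induced matrix norm and that a matrix whose spectral radius is strictly below $1$ has powers tending to zero. Both are classical, so in the paper it suffices to cite Theorem~\ref{spectral} from \cite{Minc}; the sketch above would be included only to keep the exposition self-contained.
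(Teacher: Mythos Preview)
Your proof is correct, and in fact you have done more than the paper: the paper does not prove Theorem~\ref{spectral} at all, simply stating it as ``well-known'' with a citation to \cite{Minc}. Your self-contained argument (via $\|A\|_\infty$ for the upper bound and the entrywise inequality $A^k\mathbf{1}\ge m^k\mathbf{1}$ for the lower bound) is a standard and valid justification, and you correctly anticipated that a citation alone would suffice here.
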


\begin{corollary}\label{n4}
Let $\mathcal{P}$ be an $n$-periodic and $\pi$-irreducible triple chain. Assume that the two possible openings of $\mathcal{P}$ have entropy zero. Then, $h(\mathcal{P})>\log(\sqrt[n]{4})$.
\end{corollary}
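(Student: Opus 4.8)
The plan is to estimate from below the spectral radius $\rho:=\rho(M_{\mathcal{P}})$ of the path transition matrix of $\mathcal{P}$ by applying Theorem~\ref{spectral} to the power $M_{\mathcal{P}}^{\,n}$. By Remark~\ref{patent}, $h(\mathcal{P})=\log\max\{\rho,1\}$, so it suffices to prove $\rho>\sqrt[n]{4}$; this in particular gives $\rho>1$, hence $h(\mathcal{P})=\log\rho$.

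First I would rephrase Proposition~\ref{cover4} as a lower bound for the row sums of $M_{\mathcal{P}}^{\,n}$. For a basic path $\pi$, the entry $(M_{\mathcal{P}}^{\,n})_{\pi\sigma}$ counts the directed paths of length $n$ from $\pi$ to $\sigma$ in the $\mathcal{P}$-path graph, so the row sum $r_\pi(M_{\mathcal{P}}^{\,n})$ is the number of directed paths of length $n$ issuing from $\pi$. Proposition~\ref{cover4} yields, for every $\pi$, a branching scheme rooted at $\pi$: a (possibly trivial) non-branching segment reaching a basic path that $f$-covers two distinct basic paths, and from each of those a non-branching segment reaching a basic path that again $f$-covers two distinct basic paths, the four resulting basic paths all being reached within $n$ iterates. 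This gives four directed paths out of $\pi$, of length at most $n$, pairwise distinct since they diverge at one of the two branch vertices. Since $f$ is a bijection on the periodic orbit $P$, every basic path $\tau=\{a,b\}$ has $f(a)\ne f(b)$, so $\chull{f(\tau)}$ is a nondegenerate arc and hence $f$-covers at least one basic path; appending such coverings, each of the four paths above extends to a directed path of length exactly $n$, the extensions remaining pairwise distinct. Therefore $r_\pi(M_{\mathcal{P}}^{\,n})\ge 4$ for every basic path $\pi$, i.e.\ every row sum of the nonnegative matrix $M_{\mathcal{P}}^{\,n}$ is at least $4$.

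Theorem~\ref{spectral} applied to $M_{\mathcal{P}}^{\,n}$ then gives $\rho^{\,n}=\rho(M_{\mathcal{P}}^{\,n})\ge\min_\pi r_\pi(M_{\mathcal{P}}^{\,n})\ge 4$, so $\rho\ge\sqrt[n]{4}$. To obtain the strict inequality I would rule out $\rho^{\,n}=4$. If $\rho^{\,n}=4$, then $\rho(M_{\mathcal{P}}^{\,n})$ equals the minimum row sum of $M_{\mathcal{P}}^{\,n}$; testing $M_{\mathcal{P}}^{\,n}\mathbf{1}\ge 4\,\mathbf{1}$ against a nonnegative Perron--Frobenius left eigenvector for the eigenvalue $4=\rho(M_{\mathcal{P}}^{\,n})$ forces $r_\pi(M_{\mathcal{P}}^{\,n})=4$ for every basic path $\pi$ lying in a strongly connected component of the $\mathcal{P}$-path graph whose submatrix realises $\rho$. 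I would then contradict this by squeezing a fifth length-$n$ covering out of the proof of Proposition~\ref{cover4}: the split counts in Proposition~\ref{bifurcacions}(a) are \emph{strict} (the path $\{0,a\}$ splits in $n/q-a<n/q$ iterates), so after reaching the four basic paths at least one further iterate remains, and the $\pi$-irreducibility of $\mathcal{P}$ forces an eventual split producing a new length-$n$ walk, at a basic path inside a recurrent component carrying $\rho$. Hence $\rho>\sqrt[n]{4}$ and $h(\mathcal{P})=\log\rho>\log\sqrt[n]{4}$; by Proposition~\ref{propietats}(b) this already yields $h(\mathcal{P})>\log(\lambda_n)$.

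The routine part is the second paragraph, namely turning the combinatorial output of Proposition~\ref{cover4} into genuinely distinct length-$n$ walks in the path graph and invoking the spectral bound of Theorem~\ref{spectral}. The step I expect to be the main obstacle is the strict inequality: the bare estimate $\rho^{\,n}\ge 4$ is by itself compatible with equality, so one must extract from the proof of Proposition~\ref{cover4} a basic path whose number of length-$n$ coverings strictly exceeds four, and verify that this basic path sits in a strongly connected component of the $\mathcal{P}$-path graph that attains the spectral radius (using the strictness in Proposition~\ref{bifurcacions}(a) together with $\pi$-irreducibility).
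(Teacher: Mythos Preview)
Your core argument (the first two paragraphs) is exactly the paper's: feed Proposition~\ref{cover4} into Theorem~\ref{spectral} applied to $M_{\mathcal{P}}^{\,n}$ to get $\rho(M_{\mathcal{P}})^n\ge 4$. Your explicit justification that four coverings in at most $n$ iterates translate into four distinct length-$n$ walks in the path graph (using that every basic path $f$-covers at least one basic path, since $f$ is injective on $P$) is a welcome clarification that the paper leaves implicit.

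Your third paragraph, however, does not close the strict inequality. The claim that ``at least one further iterate remains'' is not uniformly true: in the first subcase of case~(i) in the proof of Proposition~\ref{cover4} the two splits are each bounded by $n/2$, so the stated bound for reaching four basic paths is exactly $n$, not $n-1$. And even where a spare iterate exists, $\pi$-irreducibility only says every basic path \emph{eventually} splits, not within the remaining budget; nor do you verify that the basic path where you would produce a fifth walk lies in a strongly connected component realising $\rho$. So the strictness argument, as written, is a sketch rather than a proof.

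That said, the paper's own proof also establishes only $\rho(M_{\mathcal{P}})\ge\sqrt[n]{4}$ and then asserts ``the result follows''; the strict inequality in the statement is not actually proved there either. For the two places where Corollary~\ref{n4} is invoked (the proofs of Theorems~\ref{Tachin} and~\ref{Tachin3}), only the weak inequality $h(\mathcal{P})\ge\log\sqrt[n]{4}$ is needed, since it is combined with the strict inequality $\sqrt[n]{4}>\lambda_n$ of Proposition~\ref{propietats}(b). So your proof is at least as complete as the paper's, and your final sentence correctly extracts the consequence $h(\mathcal{P})>\log(\lambda_n)$ that is really used downstream.
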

\begin{proof}
By Remark~\ref{patent}, $h(\mathcal{P})=\log\max\{\rho(M),1\}$, where $M$ is the path transition matrix of $\mathcal{P}$.
By Proposition~\ref{cover4}, any basic path of $\mathcal{P}$ covers at least four basic paths in $n$ iterates. In particular, the sum of the elements on each row of $M^n$ is $r_i(M^n)\geq 4$. By Theorem~\ref{spectral}, $4\leq \rho(M^n)\leq \rho(M)^n$. In consequence, $\rho(M)\geq \sqrt[n]{4}$ and the result follows.
\end{proof}

\section{Proof of Theorem~\ref{Tachin}}\label{S10}
Now we have all the necessary ingredients to deploy the proof of Theorem~\ref{Tachin} as sketched in Section~\ref{S5}.

\begin{proof}[Proof of Theorem~\ref{Tachin}]
We prove the result by induction on the period $n$. For $n=3$ there is nothing to prove, since the only pattern with positive entropy is $\mathcal{Q}_3$. Let $\mathcal{P}$ an $n$-periodic pattern and assume now that the theorem is true for any period less than $n$. By Theorem~\ref{5.3}, we can assume that all openings of $\mathcal{P}$ are zero entropy patterns.

If $\mathcal{P}$ is $\pi$-reducible for a basic path $\pi$, then, by Proposition~\ref{twocharact}, it has a separated structure of $p\ge2$ trivial blocks. The associated skeleton $\mathcal{S}$ is a $p$-periodic pattern and, by Corollary~\ref{skeletonok}, its entropy is the same as $\mathcal{P}$, positive. In particular, $p\ge3$. Since $p$ is a strict divisor of $n$, $h(\mathcal{S})\ge\log(\lambda_p)$ by the induction hypothesis. Then, $h(\mathcal{P})>\log(\lambda_n)$ by Proposition~\ref{81} and we are done in this case.

From now on, we will assume that $\mathcal{P}$ is $\pi$-irreducible. By Theorem~\ref{2inners_3op}, $\mathcal{P}$ is either a $k$-flower or a triple chain.

Assume first that $\mathcal{P}$ is a $k$-flower. If $k=2$, then we are done by Theorem~\ref{fwd}. Otherwise, by Corollary~\ref{1inner}, $\mathcal{P}$ has a subordinated $n'$-periodic pattern $\mathcal{P}'$ with positive entropy. Since $n'$ is a strict divisor of $n$, $h(\mathcal{P}')\ge\log(\lambda_{n'})$ by the induction hypothesis. Therefore, $h(\mathcal{P})>\log(\lambda_n)$ by Lemma~\ref{subor}.

Finally, we are left with the case that $\mathcal{P}$ is a triple chain. Since we are in the hypotheses of Corollary~\ref{n4}, then $h(\mathcal{P})>\log(\sqrt[n]{4})$. By Proposition~\ref{propietats}(b), we are also done in this case and thus the theorem follows.
\end{proof}

\section{Proof of Corollary~\ref{Tachin2} and Theorem~\ref{Tachin3}}\label{S11}
In this section we formally define the subfamily $\Irr_n\subset\Pos_n$ of all \emph{irreducible} $n$-periodic patterns.
Then we recall some results relating reducibility and entropy, and finally prove Corollary~\ref{Tachin2} and Theorem~\ref{Tachin3}.

Let $\mathcal{P}$ be an $n$-periodic pattern. We say that $\mathcal{P}$ is \emph{reducible} if it has a block
structure. Otherwise, $\mathcal{P}$ will be said to be \emph{irreducible}.
From the characterization of zero entropy
patterns given by Proposition~\ref{56}, it follows that any irreducible pattern has positive entropy, so that $\Irr_n\subset\Pos_n$.
The next result states that $\Pos_n=\Irr_n$ when either $n$ is a prime or $n=4$.

\begin{lemma}\label{Red}
Any $n$-periodic pattern with positive entropy is irreducible if either $n$ is a prime or $n=4$.
\end{lemma}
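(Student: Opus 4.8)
The plan is to split into the two cases. The prime case is immediate: a block structure of an $n$-periodic pattern requires a strict divisor $p$ of $n$ with $2\le p<n$, and there is no such $p$ when $n$ is prime; hence every $n$-periodic pattern is irreducible, and in particular so is every one with positive entropy.

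For $n=4$ I prove the contrapositive, that a reducible $4$-periodic pattern $\mathcal{P}$ has entropy zero. Since $p=2$ is the only admissible divisor, such a $\mathcal{P}$ carries its (unique) $2$-block structure $P=P_0\cup P_1$ with $P_0=\{a,b\}$ and $P_1=\{c,d\}$. The block conditions $\langle P_0\rangle\cap P_1=\emptyset$ and $\langle P_1\rangle\cap P_0=\emptyset$ force $[a,b]\cap P=\{a,b\}$ and $[c,d]\cap P=\{c,d\}$; in particular $a,b$ (resp.\ $c,d$) are consecutive, so both blocks are trivial.

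Now I distinguish, in the canonical model $(T,P,f)$, whether $[a,b]\cap[c,d]$ is empty. Suppose it is not. I claim $\mathcal{P}$ must then be trivial. In a tree the intersection $[a,b]\cap[c,d]$ is a (possibly degenerate) subarc $J$, and $J\cap P=\emptyset$ since a point of $J\cap P$ would lie in $\{a,b\}\cap\{c,d\}=\emptyset$. A short case analysis on the way $[c,d]$ meets $[a,b]$ — whether along a nondegenerate subarc or at a single point, and where $c$ and $d$ hang off that set, all attachment points lying outside $P$ — then shows that each of the six pairs among $a,b,c,d$ is consecutive, so $(T,P)$ has a single discrete component and $\mathcal{P}$ is trivial, whence $h(\mathcal{P})=0$. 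This elementary case analysis, relying only on the fact that two arcs in a tree intersect in a subarc, is the main (and only mildly delicate) step.

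If instead $[a,b]\cap[c,d]=\emptyset$, then $P_0\cup P_1$ is a separated structure of trivial blocks whose skeleton $\mathcal{S}$ is a $2$-periodic pattern, hence trivial and of zero entropy; Proposition~\ref{81} then yields $h(\mathcal{P})=h(\mathcal{S})=0$. In both cases $h(\mathcal{P})=0$, so every reducible $4$-periodic pattern has zero entropy, which is exactly the contrapositive of the statement for $n=4$. This completes the proof.
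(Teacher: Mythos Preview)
Your proof is correct and follows the same approach as the paper, which also handles the prime case by divisibility and the case $n=4$ by invoking Proposition~\ref{81} on the trivial $2$-periodic skeleton. You are more careful than the paper in explicitly treating the non-separated case and showing it forces $\mathcal{P}$ to be trivial (the paper simply asserts that the $2$-block structure is separated), so your case analysis fills a small gap the paper glosses over rather than constituting a genuinely different route.
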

\begin{proof}
Let $\mathcal{P}$ be an $n$-periodic pattern with $h(\mathcal{P})>0$. If $n$ is a prime, then $\mathcal{P}$ cannot
be reducible since, by definition, the period of any pattern with a block structure has strict divisors. Assume that
$n=4$ and that $\mathcal{P}$ is reducible. In this case, the only possible block structure for $\mathcal{P}$ is a
separated 2-block structure of 2 trivial blocks. The corresponding skeleton is the trivial pattern of 2 points,
with entropy zero. By Proposition~\ref{81}, $h(\mathcal{P})=0$, a contradiction.
\end{proof}

Let us see that the patterns $\mathcal{Q}_n$ with minimum positive entropy (see Figure~\ref{Q6}) are irreducible.

\begin{lemma}\label{qnirred}
Let $n\ge3$ be a positive integer. Then, the pattern $\mathcal{Q}_n$ is irreducible.
\end{lemma}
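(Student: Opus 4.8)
The plan is to argue by contradiction, exploiting the fact that $\mathcal{Q}_n$ has a unique inner point. First I would record, straight from the definition of $\mathcal{Q}_n$, that its two discrete components are $\{x_{n-1},x_0\}$ and $\{x_0,x_1,\dots,x_{n-2}\}$; hence $x_0$ is the only point lying in more than one discrete component, so it is the unique inner point of $\mathcal{Q}_n$, every other $x_i$ being an endpoint. Equivalently, in the canonical model $(T,P,f)$ the point $x_0$ is the unique point of $T$ of valence $>1$, and $T\setminus\{x_0\}$ has exactly two connected components: one, say $A$, with $A\cap P=\{x_{n-1}\}$, and the other, say $B$, with $B\cap P=\{x_1,\dots,x_{n-2}\}$.

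Next I would assume, for a contradiction, that $\mathcal{Q}_n$ is reducible, i.e.\ that it has a $p$-block structure $P=P_0\cup P_1\cup\dots\cup P_{p-1}$ for some strict divisor $p\ge 2$ of $n$; recall that then every block has $n/p\ge 2$ points. By the standing convention I may assume $0\in P_0$. Since $P$ is a single $f$-cycle with $f(x_i)=x_{i+1\bmod n}$ and $f(P_i)=P_{i+1\bmod p}$, an immediate induction on the time label gives
\[
P_i=\{x_j:0\le j\le n-1,\ j\equiv i\jmod{p}\}\qquad(0\le i<p),
\]
which is consistent at the wrap-around step $f(x_{n-1})=x_0$ precisely because $p\mid n$. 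In particular $x_{n-1}\in P_{p-1}$, and also $x_{p-1}\in P_{p-1}$ with $1\le p-1\le n-2$ (indeed $p-1\ge1$ since $p\ge2$, and $p-1\le n-2$ since $p<n$); hence $x_{p-1}\in B$ while $x_{n-1}\in A$.

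The one nontrivial step is to observe that $\chull{P_{p-1}}_T$ is then forced to meet $P_0$. Indeed $\chull{P_{p-1}}_T$ contains the arc $[x_{p-1},x_{n-1}]$, and since $x_{p-1}$ and $x_{n-1}$ lie in the two different connected components of $T\setminus\{x_0\}$, this arc passes through $x_0$. Thus $x_0\in\chull{P_{p-1}}_T$, but $x_0\in P_0$ and $P_0\ne P_{p-1}$ because $0\not\equiv p-1\jmod{p}$; this contradicts the requirement $\chull{P_i}_T\cap P_j=\emptyset$ for $i\ne j$ in the definition of a block structure. Therefore $\mathcal{Q}_n$ admits no block structure and is irreducible. (For $n=3$ there is in fact nothing to prove, since $3$ is prime and has no strict divisor $\ge2$.) I do not expect a genuine obstacle here; the only delicate point is the passage between the purely combinatorial block condition and the topology of the canonical model, and this becomes transparent once one records that $x_0$ is the unique inner point of $\mathcal{Q}_n$ and therefore separates any point of $B$ from $x_{n-1}$.
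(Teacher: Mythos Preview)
Your proof is correct and takes essentially the same approach as the paper: both argue by contradiction, using that $x_0$ is the unique inner point separating $x_{n-1}$ from the other points of the orbit, so that the convex hull of any block containing $x_{n-1}$ is forced to pass through $x_0$. The paper's version is marginally more economical---instead of computing the blocks explicitly as residue classes and exhibiting two points of $P_{p-1}$ on opposite sides of $x_0$, it notes directly that the block $P_k$ containing the endpoint $x_{n-1}$ must also contain $x_0$ (otherwise $x_0\in\chull{P_k}\cap P_j$ for some $j\ne k$), and then derives the contradiction from $f(x_{n-1})=x_0$, which gives $f(P_k)\cap P_k\ne\emptyset$---but the underlying idea is the same.
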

\begin{proof}
Let $(T,P,f)$ be a model of $\mathcal{Q}_n$ and let $P=\{x_i\}_{i=0}^{n-1}$ be time labeled.
Recall that $\{x_0,x_{n-1}\}$ is an extremal discrete component of $\mathcal{Q}_n$, with $x_{n-1}$ being an endpoint.
If $\mathcal{Q}_n$ has a $p$-block structure $P_0\cup P_1\cup\ldots\cup P_{p-1}$ with $p\ge2$, then the fact that
$\chull{P_i}\cap P_j=\emptyset$ for $i\ne j$ implies that the block containing $x_{n-1}$ (say, $P_k$) should contain also $x_0$.
Since $f(x_{n-1})=x_0$, if follows that $f(P_k)\cap P_k\ne\emptyset$, in contradiction with the definition of a
block structure.
\end{proof}

Now we are ready to prove Corollary~\ref{Tachin2}.

\begin{proof}[Proof of Corollary~\ref{Tachin2}]
It follows trivially from Lemma~\ref{qnirred} and Theorem~\ref{Tachin}.
\end{proof}

Let us proceed now with Theorem~\ref{Tachin3}, that gives the minimum positive entropy when we restrict ourselves to the family
of reducible $n$-periodic patterns. By Lemma~\ref{Red}, the problem makes sense only when $n$ is a composite integer larger than 5.
As we will see and in contrast to what happens for irreducible patterns, the minimum entropy reducible pattern is not unique.

\begin{lemma}\label{minim}
Let $n\ge6$ be a composite integer and let $p$ be the smallest proper divisor of $n$.
\begin{enumerate}
\item The minimum value in the set $\{(1/d)\log(\lambda_{n/d}): d\mbox{ divides $n$ and } d\ne 1,n\}$
is attained when $d=p$.
\item If $n>6$, then $4^{1/n}>(\lambda_{n/p})^{1/p}$.
\end{enumerate}
\end{lemma}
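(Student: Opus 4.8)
The plan is to reduce everything to two elementary facts about the roots $\lambda_k$: the defining identity $\lambda_k^{\,k}=2\lambda_k+1$, and the strict monotonicity $\lambda_j>\lambda_k$ whenever $2\le j<k$.

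For (1), set $D=\{d:\ d\mid n,\ 1<d<n\}$, so that $p=\min D$ and $d\mapsto n/d$ is an involution of $D$ with $\max D=n/p$. For each $d\in D$, using the identity $\lambda_k^{\,k}=2\lambda_k+1$ with $k=n/d$ I would rewrite
\[ \tfrac1d\log\lambda_{n/d}=\tfrac1n\cdot\tfrac nd\log\lambda_{n/d}=\tfrac1n\log\bigl(\lambda_{n/d}^{\,n/d}\bigr)=\tfrac1n\log\bigl(2\lambda_{n/d}+1\bigr). \]
Since $t\mapsto\log(2t+1)$ is increasing, minimising the left-hand side over $d\in D$ is the same as minimising $\lambda_{n/d}$, i.e.\ minimising $\lambda_k$ over $k\in D$. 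By the monotonicity of $\lambda_\bullet$ this minimum is $\lambda_{\max D}=\lambda_{n/p}$, attained exactly at $k=n/p$, that is at $d=p$; substituting back, the minimum value equals $\tfrac1n\log(2\lambda_{n/p}+1)=\tfrac1p\log\lambda_{n/p}$, which is the $d=p$ term.

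For (2), I would first observe that $m:=n/p\ge 3$: indeed $m=1$ is impossible since $n$ is composite, and $m=2$ would force $2\mid n$, hence $p=2$ and $n=4$, contradicting $n>6$. Then Proposition~\ref{propietats}(b) applied to the integer $m$ gives $\sqrt[m]{4}>\lambda_m$; raising both (positive) sides to the power $1/p$ and using $n=mp$ yields $4^{1/n}=(\sqrt[m]{4})^{1/p}>\lambda_m^{1/p}=(\lambda_{n/p})^{1/p}$, as desired.

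The only point needing care — the "hard part", though it is not very hard — is the monotonicity $\lambda_j>\lambda_k$ for $2\le j<k$, since Proposition~\ref{propietats}(a) is stated only for indices $\ge3$ and I also need the boundary comparison $\lambda_2>\lambda_3$. This can be settled directly by $\lambda_2=1+\sqrt2>\tfrac{1+\sqrt5}2=\lambda_3$ together with Proposition~\ref{propietats}(a); or, self-containedly, by noting that for $2\le j<k$ one has $\lambda_j>1$, hence $\lambda_j^{\,k}-2\lambda_j-1>\lambda_j^{\,j}-2\lambda_j-1=0$, and since $x^k-2x-1$ equals $-2$ at $x=1$ and has $\lambda_k$ as its unique zero in $(1,+\infty)$, this forces $\lambda_j>\lambda_k$. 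Everything else is routine substitution.
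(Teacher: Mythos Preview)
Your argument for (a) is correct and follows essentially the same route as the paper: both reduce the problem, via the identity $\lambda_k^{\,k}=2\lambda_k+1$, to the monotonicity of $\lambda_k$ in $k$. Your attention to the boundary case $k=2$ (needed e.g.\ when $n=6$, since then $n/d$ ranges over $\{2,3\}$) is in fact more careful than the paper's own treatment, which tacitly uses $\lambda_2>\lambda_3$ without comment.

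For (b), your route differs from the paper's: the paper argues directly that $4>(\lambda_{n/p})^{n/p}=2\lambda_{n/p}+1$ by asserting $n/p\ge4$ (so that $\lambda_{n/p}\le\lambda_4\approx1.39<3/2$), while you invoke Proposition~\ref{propietats}(b) for $m=n/p$ after establishing $m\ge3$. There is, however, a genuine gap at $m=3$, i.e.\ at $n=9$: although Proposition~\ref{propietats}(b) is stated for indices $\ge3$, it actually fails at the index $3$, since $4^{1/3}\approx1.587<\lambda_3=(1+\sqrt5)/2\approx1.618$. So your proof breaks down precisely at $n=9$. This cannot be repaired: statement (b) of the lemma is itself false at $n=9$, because $4^{1/9}\approx1.167<(\lambda_3)^{1/3}\approx1.174$. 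The paper's own proof contains the same oversight in a different guise, via the claim ``$n/p\ge4$ for composite $n>6$'', which also fails at $n=9$. For every other composite $n>6$ one does have $n/p\ge4$, and then both arguments go through.
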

\begin{proof}
Let us prove (a). It suffices to show that $(\lambda_{n/d})^{1/d}$ is minimum when $d=p$. Since $\lambda_i>1$ for any
$i\ge3$, this is equivalent to show that $(\lambda_{n/d})^{n/d}$ is minimum when $d=p$. This claim will be
true if we prove that $(\lambda_k)^k$ is decreasing in $k$. By definition, $\lambda_i$ satisfies $(\lambda_i)^i-2\lambda_i-1=0$.
On the other hand, Proposition~\ref{propietats}(a) tells us that $\lambda_k$ decreases with $k$. Putting all together yields
$(\lambda_k)^k=2\lambda_k+1>2\lambda_{k+1}+1=(\lambda_{k+1})^{k+1}$.

Let us prove (b). We have to show that $4^{1/n}>(\lambda_{n/p})^{1/p}$, which is equivalent to prove that
$4>(\lambda_{n/p})^{n/p}=2\lambda_{n/p}+1$. This will be true if $3/2>\lambda_{n/p}$. Now observe that
$n\ge8$ and $n/p\ge4$ because $n>6$ is composite. Since $\lambda_k$ decreases with $k$,
$\lambda_{n/p}\le\lambda_4\approx1.39$ and we are done.
\end{proof}

We will prove Theorem~\ref{Tachin3} in two steps. First, we will show that if $\mathcal{P}$ is an $n$-periodic and
reducible pattern with positive entropy, then $h(\mathcal{P})\ge\log(\lambda_{n/p})/p$, where $p$ is the smallest prime factor of $n$.
Secondly, we will provide examples of patterns attaining precisely this entropy.

To find examples of reducible patterns with minimum positive entropy, we use the following construction, a
generalization of the classic notion of \emph{extension} for interval patterns \cite{block2}. Let $\mathcal{R}$
be a $k$-periodic pattern and let $p\ge2$ be an integer. A \emph{$p$-extension} of $\mathcal{R}$ is a
$pk$-periodic pattern $\mathcal{P}$ such that, for any model $(T,P,f)$ of $\mathcal{P}$, there is a separated
$p$-block structure $P=P_0\cup P_1\cup\ldots\cup P_{p-1}$ satisfying:
\begin{enumerate}
\item The associated skeleton is a trivial $p$-periodic pattern.
\item The pattern $([\chull{P_0},P_0],[f^p])$ is $\mathcal{R}$.
\item For $1\le i<p$, the pattern $([\chull{P_i},P_i],[f^p])$ is trivial.
\end{enumerate}
There are several possible $p$-extensions of a given pattern, see Figure~\ref{exten} for an example. The next result states that,
in any case, all the possible $p$-extensions have the same entropy. This fact is well known for extensions of interval patterns and other
similar constructions, and the same proof applies in this setting. See for instance Lemma~4.4.16 of \cite{biblia}.

\begin{lemma}\label{hext}
If $\mathcal{P}$ is a $p$-extension of a pattern $\mathcal{R}$, then $h(\mathcal{P})=h(\mathcal{R})/p$.
\end{lemma}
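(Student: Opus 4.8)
The plan is to establish the two inequalities $h(\mathcal{P})\ge h(\mathcal{R})/p$ and $h(\mathcal{P})\le h(\mathcal{R})/p$ separately, using throughout the standard identity $h(g^{p})=p\,h(g)$ for the topological entropy of a continuous self-map of a compact metric space. For the first inequality I would take the canonical model $(T,P,f)$ of $\mathcal{P}$, so that $h(\mathcal{P})=h(f)$ by Theorem~\ref{A-AGLMM}. By the definition of a $p$-extension the subtree $\chull{P_0}$ is $f^{p}$-invariant and $(\chull{P_0},P_0,f^{p}\evalat{\chull{P_0}})$ is a model exhibiting $\mathcal{R}$; since passing to a closed invariant subset does not increase entropy, $p\,h(\mathcal{P})=h(f^{p})\ge h\bigl(f^{p}\evalat{\chull{P_0}}\bigr)\ge h(\mathcal{R})$, which gives $h(\mathcal{P})\ge h(\mathcal{R})/p$.

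For the reverse inequality I would exhibit a single model of $\mathcal{P}$ of entropy $h(\mathcal{R})/p$. Starting from a monotone model $(T_R,P_R,f_R)$ of $\mathcal{R}$ with $k:=|P_R|$ and $h(f_R)=h(\mathcal{R})$ (which exists by Theorem~A of \cite{aglmm}), I would build a tree $T$ by attaching, to a central point $c$ through disjoint arcs, the tree $T_R$ together with $p-1$ further pairwise disjoint trees $B_1,\dots,B_{p-1}$, each carrying $k$ marked points forming a single discrete component, in such a way that collapsing $T_R,B_1,\dots,B_{p-1}$ produces the trivial $p$-periodic star pattern. The map $\map{f}{T}$ is then defined to fix $c$, to rotate the arcs cyclically, to send $B_i$ homeomorphically onto $B_{i+1}$ for $1\le i\le p-2$, and to send $T_R$ onto $B_1$ and $B_{p-1}$ onto $T_R$ by monotone folding surjections, all chosen so that the induced action on the $pk$ marked points is a single cycle, its first return to the marked points of $T_R$ equals $f_R\evalat{P_R}$, and the first-return map $f^{p}\evalat{T_R}$ is $P_R$-monotone. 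With $P$ the resulting $pk$-cycle and $P_i:=f^{i}(P_R)$, this $(T,P,f)$ is a model of $\mathcal{P}$: the sets $P_i$ form a separated $p$-block structure with trivial skeleton, the block $P_0$ realizes $\mathcal{R}$, and the blocks $P_1,\dots,P_{p-1}$ are trivial.

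Then I would compute $h(f^{p})$. Since $T$ is a finite union of the $f^{p}$-invariant closed sets $T_R,B_1,\dots,B_{p-1}$ and the invariant arc system through $c$, the entropy of $f^p$ is the maximum of the entropies of its restrictions, so $h(f^{p})=\max\{h(f^{p}\evalat{T_R}),h(f^{p}\evalat{B_1}),\dots,h(f^{p}\evalat{B_{p-1}}),0\}$. Here $f^{p}\evalat{T_R}$ is a monotone model of $\mathcal{R}$, hence of entropy $h(\mathcal{R})$; and writing $f^{p}\evalat{T_R}=\psi\circ\phi$ with $\phi=f\evalat{T_R}\colon T_R\to B_1$ and $\psi\colon B_1\to T_R$ the composite of the remaining factors, one has $f^{p}\evalat{B_1}=\phi\circ\psi$, so $h(f^{p}\evalat{B_1})=h(\psi\circ\phi)=h(\mathcal{R})$ by the standard identity $h(\phi\circ\psi)=h(\psi\circ\phi)$, and likewise for the other blocks. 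Thus $h(f^{p})=h(\mathcal{R})$, and $h(\mathcal{P})\le h(f)=h(\mathcal{R})/p$, which together with the first part proves the lemma.

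The hard part is the construction in the second paragraph: the two folding maps $T_R\to B_1$ and $B_{p-1}\to T_R$ must be chosen simultaneously so that the orbit and discrete-component data of the $pk$ marked points are exactly those of $\mathcal{P}$ and so that the first-return map $f^{p}\evalat{T_R}$ is genuinely \emph{monotone}; without the latter, folding could spuriously push the entropy above $h(\mathcal{R})$. This is precisely the bookkeeping carried out for the classical notion of extension of interval patterns in Lemma~4.4.16 of \cite{biblia}, and the same proof transfers, the only new ingredients being the star-shaped skeleton and the separation of the blocks, both of which are built into the definition of a $p$-extension. Alternatively one could bypass the construction and work directly with the canonical model of $\mathcal{P}$, using the extra regularity of canonical models to check that $f^{p}$ restricts to a monotone model on each $\chull{P_i}$ and to a monotone map on the connecting arcs, and then invoke the same entropy decomposition.
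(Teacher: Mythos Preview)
Your proposal is correct and aligns with the paper, which does not actually supply a proof but simply states that the result ``is well known for extensions of interval patterns and other similar constructions, and the same proof applies in this setting'', referring to Lemma~4.4.16 of \cite{biblia}. Your outline of the two inequalities, the explicit construction of a low-entropy model, and the entropy decomposition via $h(\phi\circ\psi)=h(\psi\circ\phi)$ is exactly the kind of argument that reference contains, and you yourself defer the delicate bookkeeping (matching the given $\mathcal{P}$ and ensuring $P_R$-monotonicity of the return map) to the same lemma; so there is no substantive divergence in approach.
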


\begin{figure}
\centering
\includegraphics[scale=0.65]{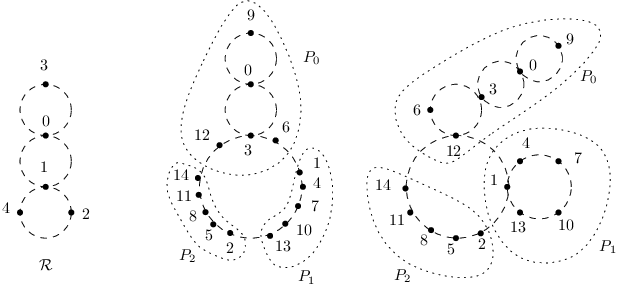}
\caption[fck]{Two different 3-extensions of a 5-periodic pattern $\mathcal{R}$. In both cases, the patterns of $f^3$ in each block are either
trivial or $\mathcal{R}$ itself, and the skeleton is the trivial 3-periodic pattern.\label{exten}}
\end{figure}

Now we are ready to prove Theorem~\ref{Tachin3}.

\begin{proof}[Proof of Theorem~\ref{Tachin3}]
Let $\mathcal{P}$ be an $n$-periodic and reducible pattern with positive entropy. We claim first that
$h(\mathcal{P})\ge\log(\lambda_{n/p})/p$, where $p$ is the smallest prime factor of $n$.

Recall (Lemma~\ref{openingblock}) that after performing an opening on $\mathcal{P}$, the obtained pattern is also reducible,
and, by Theorem~\ref{5.3}, its entropy is less or equal to $h(\mathcal{P})$. Therefore, from now on we can assume that $\mathcal{P}$
satisfies the property (\ref{opening0}) introduced in page~\pageref{pagina}.

Assume that $\mathcal{P}$ is $\pi$-reducible for some basic path $\pi$. By Proposition~\ref{twocharact},
$\mathcal{P}$ has a separated block structure of $k\ge2$ trivial blocks, where $k$ is a strict divisor of $n$. Moreover,
if $\mathcal{S}$ is the corresponding skeleton, then $h(\mathcal{S})=h(\mathcal{P})$ by Proposition~\ref{81}.
In particular, since $\mathcal{S}$ is $k$-periodic and $h(\mathcal{P})>0$, it follows that $k>2$.
Now, Theorem~\ref{Tachin} tells us that $h(\mathcal{S})\ge\log(\lambda_k)$. If we set $d:=n/k$, we have that
\[ h(\mathcal{P})=h(\mathcal{S})\ge\log(\lambda_k)=\log(\lambda_{n/d})>\log(\lambda_{n/d})/d, \]
which, from (a) of Lemma~\ref{minim}, is larger than or equal to $\log(\lambda_{n/p})/p$. So, the claim is proved
when $\mathcal{P}$ is $\pi$-reducible.

From now on we assume that $\mathcal{P}$ is $\pi$-irreducible. Theorem~\ref{2inners_3op} and (\ref{opening0}) imply
then that $\mathcal{P}$ is either a $k$-flower of a triple chain.

Assume first that $\mathcal{P}$ is a $k$-flower. Let $(T,P,f)$ be a model of $\mathcal{P}$ and let $x$ be the only inner point. Since $\mathcal{P}$ is
reducible, it has a $d$-block structure. Let $P_0$ be the block containing $x$. Note that, by the definition of a block structure
and the fact that $x$ is the unique inner point, all the remaining blocks are trivial. Consider the $(n/d)$-periodic pattern
$\mathcal{R}:=([\chull{P_0}_T,P_0],[f^d])$. Since $h(\mathcal{R})$ is smaller than or equal to the entropy of any map exhibiting
$\mathcal{R}$ and $f^d$ exhibits $\mathcal{R}$,
\begin{equation}\label{desi}
h(\mathcal{R})\le h(f^d)=d\cdot h(f)=d\cdot h(\mathcal{P}).
\end{equation}
Observe that if there exists some basic path $\pi\subset P_0$ of $\mathcal{R}$ that never splits by $f^d$, then $\pi$ never splits by $f$,
because all blocks different from $P_0$ are trivial. In this case $\mathcal{P}$ would be $\pi$-reducible, a contradiction. In
consequence, $\mathcal{R}$ is $\pi$-irreducible. In particular, $h(\mathcal{R})>0$. Thus, by Theorem~\ref{Tachin}, $h(\mathcal{R})\ge\log(\lambda_{n/d})$.
Using (\ref{desi}) yields $h(\mathcal{P})\ge\log(\lambda_{n/d})/d$ which, from (a) of Lemma~\ref{minim}, is larger than or equal to
$\log(\lambda_{n/p})/p$. So, the claim is proved in this case.

Finally, assume that $\mathcal{P}$ is a triple chain. We treat first the special case $n=6$. In this case, $\mathcal{P}$
cannot have a 3-structure of blocks of two points, since $\mathcal{P}$ would be $\pi$-reducible. So, the only
possibility is that $\mathcal{P}$ has a 2-structure of blocks of three points. Again, if both blocks were
trivial, $\mathcal{P}$ would be $\pi$-reducible. So, in at least one of the two blocks, $P_0$, the pattern
$([\chull{P_0}_T,P_0],[f^2])$ is a non-trivial 3-periodic pattern. Such a pattern is uniquely determined and
coincides with the 3-periodic \v{S}tefan cycle of the interval \cite{ste}, with entropy $\log(\lambda_3)$.
Using the same argument as in the previous paragraph, the claim follows also in the case $n=6$.

Finally, if $n>6$, since $\mathcal{P}$ is $\pi$-irreducible and (\ref{opening0}) holds,
then Corollary~\ref{n4} tells us that $h(\mathcal{P})>\log(\sqrt[n]{4})$, larger than or equal to
$\log(\lambda_{n/p})^{1/p}$ by Lemma~\ref{minim}(b). The claim, thus, holds also in this case.

Once the claim that gives a lower bound for the entropy of $\mathcal{P}$ is proven, we have to give an example of a
reducible $n$-periodic pattern having precisely this entropy. By Lemma~\ref{hext}, it is enough to consider any $p$-extension of
$\mathcal{Q}_{n/p}$.
\end{proof}

\end{document}